\theoremstyle{plain}
\newtheorem{theorem}{Theorem}[section]
\newtheorem{lemma}[theorem]{Lemma}
\newtheorem{proposition}[theorem]{Proposition}
\newtheorem{Definition}[theorem]{Definition}
\theoremstyle{remark}
\newtheorem{remark}[theorem]{Remark}
\numberwithin{equation}{section}
\title[Ergodicity stochastic heat equation Hölder coefficient]{Exponential ergodicity of stochastic heat equations with Hölder coefficients}
\author{Yi HAN}
\address{Department of Mathematics, Massachusetts Institute of Technology, Cambridge MA.
}
\email{hanyi16@mit.edu}
\thanks{Supported by EPSRC grant EP/W524141/1 and Simons Collaboration grant (601948, DJ).\\ Keywords: stochastic heat equation; coupling; ergodicity. \\MSC: 60H15, 60H50, 60A10}
\begin{document}

\begin{abstract}
    We investigate the stochastic heat equation driven by space-time white noise defined on an abstract Hilbert space, assuming that the drift and diffusion coefficients are both merely Hölder continuous. Random field SPDEs are covered as special examples. We give the first proof that there exists a unique in law mild solution when the diffusion coefficient is $\beta$ - Hölder continuous for $\beta>\frac{3}{4}$ and uniformly non-degenerate, and that the drift is locally Hölder continuous. Meanwhile, assuming the existence of a suitable Lyapunov function for the SPDE, we prove that the solution converges exponentially fast to the unique invariant measure with respect to a typical Wasserstein distance. Our technique generalizes when the SPDE has a Burgers type non-linearity $(-A)^{\vartheta}F(X_t)$ for any $\vartheta\in(0,1)$, where $F$ is $\vartheta+\epsilon$- Hölder continuous and has linear growth. For $\vartheta\in(\frac{1}{2},1)$ this result is new even in the case of additive noise. 
\end{abstract}

\maketitle

\section{Introduction}
In the past thirty years, there have been significant developments in the field of stochastic partial differential equations. Two classical notions of solutions, including vector field solutions \cite{carmona1986introduction} and solutions as evolution equations defined on Hilbert spaces \cite{da2014stochastic}, are used predominantly in the literature, while the new solution theory of regularity structures \cite{hairer2014theory} gained much success in solving singular SPDEs.

In most of these solution theories, it is assumed that either the noise is an additive noise, or the coefficient in front of the noise is locally Lipschitz continuous in the argument $X_t$. Not much is done concerning the following SPDE defined on a separable Hilbert space
\begin{equation}\label{heatequation}
    dX_t=\Delta X_t dt+b(X_t)dt+\sigma(X_t)dW_t
\end{equation}
when the diffusion coefficient $\sigma$ is not locally Lipschitz continuous in its argument $X_t$. In this paper we outline a comprehensive study of such SPDEs in an abstract framework, covering a wide variety of models that were previously intractable. 

This problem is interesting because it illustrates the crucial difference between finite and infinite dimensional analysis. For finite dimensional SDEs, elliptic PDE techniques (see for example \cite{krylov2021strong} and the references therein), or the simple trick of representing the solution as a time-changed Brownian motion \cite{stroock1997multidimensional}, can easily solve such problems of a non-Lipschitz, non-degenerate diffusion coefficient. When the diffusion coefficient is degenerate, one should mention Yamada and Watanabe’s $1/2$-Hölder continuity result \cite{yamada1971uniqueness}. In infinite dimensions, the story is quite different as most of the finite-dimensional techniques are no longer available. A notable progress is made by Mytnik and Perkins \cite{mytnik2011pathwise} who study 1-d vector field stochastic heat equation with Lipschitz drift and $\frac{3}{4}+\epsilon$- Hölder noise coefficient, generalizing the result of \cite{yamada1971uniqueness} to infinite dimensions. See also \cite{mueller2014nonuniqueness}, \cite{10.1214/14-AOP962} for related non-uniqueness results. In this paper we take an abstract perspective to study Hilbert space valued SPDEs. 

In this paper we adopt the following notion of solutions:
\begin{Definition}
Consider a separable Hilbert space $H$ and a densely defined operator $A$ on $H$. Let $\mathcal{S}(t)$ denote the semigroup on $H$ generated by $A$. Consider the following evolution equation on $H$ (the conditions on $b$ and $\sigma$ will be specified explicitly in the sequel),
$$
dX_t=AX_t dt+b(t,X_t)dt+\sigma(t,X_t)dW_t,\quad X_0=x\in H. 
$$ A \textbf{weak mild} solution to this equation consists of a sequence $(\Omega,\mathcal{F},(\mathcal{F}_t),\mathbb{P},W,X)$, where $(\Omega,\mathcal{F},(\mathcal{F}_t),\mathbb{P})$ is a filtered probability space defining a cylindrical Wiener process $W$ on $H$ adapted to $\mathcal{F}_t$, and $(X_t)_{t\geq 0}$ is an $H$-valued, $\mathcal{F}_t$-adapted continuous process that satisfies, $\mathbb{P}$-a.s., the following for any $t>0$:
\begin{equation}
X_t=\mathcal{S}(t)x+\int_0^t \mathcal{S}(t-s)b(s,X_s)ds+\int_0^t \mathcal{S}(t-s)\sigma(s,X_s)dW_s.    
\end{equation}
More generally, for the following SPDE of Burgers type, where $\vartheta\in(0,1)$,
$$
dX_t=AX_t dt+b(t,X_t)dt+(-A)^{\vartheta}F(t,X_t)dt+\sigma(t,X_t)dW_t,\quad X_0=x\in H,  
$$ a \textbf{weak mild} solution consists of a sequence $(\Omega,\mathcal{F},(\mathcal{F}_t),\mathbb{P},W,X)$, where 
\begin{equation}
X_t=\mathcal{S}(t)x+\int_0^t \mathcal{S}(t-s)b(s,X_s)ds+\int_0^t(-A)^{\vartheta}\mathcal{S}(t-s)F(s,X_s) ds+\int_0^t \mathcal{S}(t-s)\sigma(s,X_s)dW_s.    
\end{equation}

\end{Definition}

The reader will be aware that we consider probabilistic weak solutions rather than probabilistic strong, or pathwise, solutions to these SPDEs. This is in particular due to the infinite dimensional setting we are interested in in this paper. For strong solutions of Hilbert space valued SPDEs with irregular drift, one should mention the case of a Hölder continuous drift considered in \cite{da2010pathwise}, and the case of a merely bounded measurable drift considered in \cite{da2013strong} and \cite{da2016strong} where strong solutions are only proved for some initial conditions but not for all. These works heavily exploit the Gaussian structure of white noise, and do not seem to easily generalize to multiplicative noise setting. For these reasons, the solution theory of Hilbert space SPDEs with multiplicative Hölder continuous noise coefficient seems completely open, and even weak well-posedness is beyond reach by most existing techniques. In this paper we fill this gap and build a brand new solution theory to these SPDEs.

Although only weak well-posedness is considered in this paper, our solution theory is remarkably robust. We do not impose any structural conditions on the coefficient beyond their Hölder continuity, so they cover vector field solutions in any dimension. We can consider the solution with an additional Burgers type non-linearity, and we can study all the long-term behaviours of these SPDEs by proving their exponential convergence to the unique invariant measure under a Lyapunov condition, even though the coefficients of the SPDEs are not Lipschitz continuous. Finally, the technique generalizes to stochastic wave equation as well, see the parallel work \cite{HAN2024110224}.

Now we state the main results of this paper.
\subsection{Well-posedness of stochastic heat equation with Hölder coefficients} We begin with the well-posedness results.

Throughout the paper we will use the notation $|\cdot|$ to represent a norm, and unless specified specifically, the norm is with respect to the Hilbert space $H$ on which the evolution equation is defined. In section \ref{section240} however, we will also use this notation for the absolute value on $\mathbb{R}$ and for the Euclidean norm on $\mathbb{R}^r$- this will not cause any confusion as the meaning should be absolutely clear from the context.

\begin{theorem}\label{theorem1.1}
Consider the evolution equation 
\begin{equation}\label{evolutionequation}
    dX_t=A X_t dt+b(t,X_t)dt+\sigma(t,X_t)dW_t,\quad X_0=x\in H
\end{equation}
on a Hilbert space $H$, where $W$ is a cylindrical Wiener process on $H$. Assume that

$\mathbf{H_1}$ The densely defined operator $A:\mathcal{D}(A)\subset H\to H$
is diagonalizable on $H$, with eigenvectors $\{e_n\}_{n\geq 1}$ forming an orthonormal basis in $H$, such that the eigenvalues $$Ae_n=-\lambda_n e_n,\quad \lambda_n>0,$$ satisfy that $\lambda_n\geq c>0$ for some constant $c>0$. Moreover assume $\lambda_n$ satisfies the following summability condition: there exists some $\eta_0\in(0,1)$ such that for each $\eta\in(0,\eta_0),$
\begin{equation}\label{summableeta0}\sum_{n\geq 1}\frac{1}{{{\lambda_n}^{1-\eta}}}<\infty.\end{equation}
(The meaning of $\eta_0$, and some examples, will be explained after statement of the theorem. This $\eta_0$ crucially captures the linear contraction rate of the dynamics.)
Further, we assume there exists a Banach space $H_0\subset H$ with continuous embedding, that $\{e_n\}_n\subset H_0$, and that there exists some $M>0$ such that 
\begin{equation}\label{new1.71.7}
    \sup_n |e_n|_{H_0}\leq M<\infty.
\end{equation}
In the following we use the notation $\mathcal{L}(H_0,H)$ to denote the space of continuous linear mappings from $H_0$ to $H$.

$\mathbf{H_2}$ The drift coefficient $b(t,x):[0,\infty)\times H\to H$ is measurable and satisfies, for some $\alpha\in(0,1]$ and $M>0$ independent of $t$,
\begin{equation}
    |b(t,x)-b(t,y)|\leq M|x-y|^\alpha,\quad |x-y|\leq 1,\quad t>0.
\end{equation}

$\mathbf{H_3}$ The diffusion coefficient $\sigma(t,x):[0,\infty)\times H\to \mathcal{L}(H_0,H)$ is measurable and satisfies: for some $\beta\in(1-\frac{\eta_0}{2},1]$ and some $M>0$ independent of $t$, we have 
\begin{equation}
|\sigma(t,x)-\sigma(t,y)|_{\mathcal{L}(H_0,H)}\leq M|x-y|^\beta,\quad |x-y|\leq 1,\quad t>0.\end{equation}
In particular, for the Laplacian operator on $[0,1]$ with Dirichlet boundary condition, one can take $\eta_0=\frac{1}{2}$, and $\sigma$ needs to be $\frac{3}{4}+\epsilon$-Hölder continuous in $x$.

 We further assume that $\sigma$ is self-adjoint: for any $t>0$ and $x\in H$,
\begin{equation}
    \langle \sigma(t,x)e_j,e_k\rangle=\langle e_j,\sigma(t,x)e_k\rangle.\quad j,k\in\mathbb{N}_+.
\end{equation}

$\mathbf{H_4}$ For each $t>0$, $x\in H$ we can find a Banach space $\widetilde{H}_0(t,x)\subset H$ with the induced norm such that $H_0\subset \widetilde{H}_0(t,x)\subset H$ and there is an extension of $\sigma(t,x)$ so that $\sigma(t,x)\in \mathcal{L}(\widetilde{H}_0(t,x),H)$ such that, given any $x\in H$, $\sigma(t,x)$ has a right inverse $\sigma^{-1}(t,x)$ on $H$:
$$\sigma(t,x)\sigma^{-1}(t,x)u=u\quad \text{for any} \quad u\in H,$$
and that $\sigma^{-1}(t,x)H\subset \widetilde{H}_0(t,x)$. Moreover, we assume the following boundedness condition is satisfied: there exists some $C>0$ independent of $t$ such that 
\begin{equation}
    \sup_{x\in H}|\sigma^{-1}(t,x)|_{\mathcal{L}(H,H)}<C<\infty.
\end{equation}

$\mathbf{H_5}$ The coefficients $b$ and $\sigma$ satisfy the linear growth assumption: for some $M>0$ independent of $t$, 
\begin{equation}|b(t,x)|\leq M(1+|x|), \quad |\sigma(t,x)|_{\mathcal{L}(H_0,H)}\leq M( 1+|x|),\quad x\in H.\end{equation}

Then there exists a unique weak mild solution to \eqref{evolutionequation}. 
\end{theorem}

The constant $\eta_0$ before \eqref{summableeta0} is the most crucial parameter in Theorem \ref{theorem1.1}. It controls the speed at which eigenvalues $\lambda_i$ tends to infinity. Consider 1-d Laplacian on an interval, then we can take $\eta_0=\frac{1}{2}$, see the next paragraph. Consider 2-d Laplacian, we can only take $\eta_0=1$ which is forbidden. Thus a SPDE with (standard) Laplacian operator in 2d is not covered, but it is also well-known that such a solution is distribution-valued but not function-valued. The existence of $\eta_0\in(0,1)$ ensures that the solution can be function-valued in the vector field case, which is also necessary as we can only compose a function by a Hölder continuous mapping but not compose a distribution by such a mapping. The precise value of $\eta_0$ is also crucial as it will show up in Proposition \ref{relabel}, controlling the speed of linear contraction. The latter further restricts the range of $\beta$ in assumption $\mathbf{H}_3$.

We briefly discuss how our abstract definition of stochastic PDEs covers the \textit{random field} SPDEs introduced in the literature. Consider the 1d stochastic heat equation defined on a bounded domain $[0,1]$ and Dirichlet boundary condition. Then we take $H=L^2([0,1];\mathbb{R})$ and $A=\Delta$ is the Laplace operator on a finite interval of $\mathbb{R}$ with Dirichlet boundary conditions. The eigenvalues are $-k^2\pi^2,k\in\mathbb{N}_+$ and $\mathbf{H}_1$ is satisfied with $\eta_0=\frac{1}{2}$. The eigenvectors are the sine and cosine functions, so we can take $H_0$ to be the Banach space $\mathcal{C}([0,1];\mathbb{R})$ of bounded continuous functions, which embed continuously into $H$. The assumption \ref{new1.71.7} is trivially satisfied by the sine and cosine functions. Note however that assumption $\mathbf{H}_1$ excludes stochastic heat equation in dimensions two or higher when the solution is merely distribution valued, and that no $\eta_0\in(0,1)$ can be found to satisfy assumption $\mathbf{H}_1$.
On the other hand, if we take $A=-\Delta^2$ where $\Delta$ is the Laplacian on a compact domain in $\mathbb{R}^d,d\leq 3$ with smooth boundary and Dirichlet boundary condition, then assumption $\mathbf{H}_1$ can still be satisfied with some choice of $\eta_0$, so that $\mathbf{H}_1$ does cover some interesting SPDEs in dimensions two and higher.

Then we discuss how assumptions $\mathbf{H}_2$ to $\mathbf{H}_5$ are satisfied by random field solutions. Consider a random field SPDE on $[0,1]\subset\mathbb{R}$,
$$
\frac{\partial u}{\partial t}(t,x)=\Delta u(t,x)+B(t,x,u(t,x))+g(t,x,u(t,x))dW(t,x),\quad u(0,x)=u_0(x).
$$
Assuming that $B$ is $\alpha$-Hölder continuous in its third argument uniform in $t$, $x$, then defining for every $x\in H$ and $z\in[0,1]$, $b(t,x)(z):=B(t,z,x(z))$, we can check that $b(t,x)$ satisfies $\mathbf{H}_2$ with the same $\alpha$. Assuming that $g$ is $\beta$- Hölder continuous in its third argument uniformly in $t$ and $x$, then 
defining for every $x\in H$, $u\in H$ and $z\in[0,1]$, $\sigma(t,x)u(z)=g(t,z,x(z))u(z)$,  we can check that $\mathbf{H}_3$ is satisfied with the same $\beta>0$, and the self-adjointness condition  follows readily from our definition. 

As a technical reminder, we assume that $\sigma(t,x)$ takes place in $\mathcal{L}(H_0,H)$ (instead of $\mathcal{L}(H,H)$) whereas $W$ is a Wiener process on $H$. This is a standard setting in the definition of stochastic integrals in SPDE literature, see for example \cite{da2014stochastic}, Chapter 4 and 6. In the concrete case of vector field SPDEs we make such a restriction so that we can define $\sigma(t,x)$ more conveniently: for example, on the domain $[0,1]$, take $H_0=L^\infty([0,1])\subset H=L^2([0,1])$ and $\sigma$ is given by multiplication of $u\in H_0$ with some function $v\in H$. We can multiply a bounded function by a $L^2$ function and get a function in $L^2$, but multiplication of two $L^2$ functions is not closed in $L^2$. No generality is lost in this restriction, as eventually we will show the solution to the SPDE $X_t$ is $H_0$-valued almost surely.

Then we verify assumption $\mathbf{H}_4$: if we assume that $|g(t,x,u)|>\epsilon$ for some $\epsilon>0$ and all $t,x,u$, then we define $\sigma^{-1}:[0,\infty)\times H\to \mathcal{L}(H,H)$ through 
$$
\sigma^{-1}(t,x)u(z)=\frac{1}{g(t,z,x(z))}u(z),\quad z\in[0,1]
$$for any $x,u\in H$. Then one readily verifies (see below) that after a slight extension of $\sigma$  to be an element of $\mathcal{L}(\widetilde{H}_0(t,x),H)$ for some $H_0\subset\widetilde{H}_0(t,x)$, then we have $\sigma(t,x)\sigma^{-1}(t,x)u=u$ for any $u\in H$, any $t>0,x\in H$. Our assumption on $g$ guarantees that $\sigma^{-1}(t,x)\in\mathcal{L}(H,H)$ and $\sigma^{-1}(t,x)H\subset\widetilde{H}_0(t,x)$ for each $t>0,x\in H$.

We clarify how the subspace $\widetilde{H}_0(t,x)$ is chosen: in the current vector field case, if $g$ is uniformly bounded in all its arguments, then we may as well choose $\widetilde{H}_0(t,x)=H$ because in this case multiplication by $g(t,x,u(t,x))$ maps $L^2([0,1];\mathbb{R})$ to $L^2([0,1];\mathbb{R})$. However, if $g$ is not a bounded function in $u$, then multiplication by $g$ is not well-defined from $L^2([0,1];\mathbb{R})$ to $L^2([0,1];\mathbb{R})$. In this case we need to choose $$\widetilde{H}_0(t,x):=\{\frac{1}{g(t,z,x(z))}v(z), v\in L^2([0,1];\mathbb{R}),z\in[0,1]\}.$$ Since $|g|\geq\epsilon$, $\widetilde{H}_0(t,x)\subset H$ with the induced norm. To see that $H_0\subset \widetilde{H}_0(t,x)$, we simply observe that $g(t,\cdot,x(\cdot))H_0\subset H$. 
This justifies assumption $\mathbf{H}_4$.

To justify $\mathbf{H}_5$, assuming that $B$ and $g$ have linear growth in its third argument uniformly in $t$ and $x$, then the claim for $b$ is immediate from definition of $L^2$ norm. The claim for $\sigma$ can be shown as follows: via this specific choice of $H_0$ and $H$, the $\mathcal{L}(H_0,H)$ norm of $\sigma$ can be identified with the $L^2([0,1];\mathbb{R})$-norm of $g(t,z,u(z))$ for $z\in[0,1]$. Observe that the assumptions $\mathbf{H}_3,\mathbf{H}_4$ do not require $g$ to be bounded: $g$ may have a linear growth in its third argument. Meanwhile, other SPDEs that are not in this random field form can be considered as well. For example, one may assume that $g$ depends only on the $L^2$ norm of $u$. The well-posedness of these SPDEs are new in the literature  if we only assume Hölder dependence.

The main novelty of this paper is the Hölder regularity assumption on the noise coefficient, i.e. treatment of assumptions $\mathbf{H}_3$ and $\mathbf{H}_4$ for Hilbert space valued SPDEs. Note that such results are well-known for finite dimensional SDEs via parabolic PDE techniques or the simple trick of representing the solution as a time-changed Brownian motion, but in infinite-dimensional setting none of these techniques are applicable. For earlier results see \cite{zambotti2000analytic} and \cite{athreya2006infinite}. If one is interested in 1-d vector field SPDEs, then the result of Mytnik and Perkins \cite{mytnik2011pathwise} shows that whenever the diffusion coefficient is $\frac{3}{4}+\epsilon$-Hölder continuous and the drift is Lipschitz, then there is a unique strong solution. Our main result has the same $\frac{3}{4}$ threshold as \cite{mytnik2011pathwise}, yet there are a number of differences: on the one hand we need the diffusion coefficient $\sigma$ to be non-degenerate where \cite{mytnik2011pathwise} does not, and we only construct probabilistic weak solutions rather than strong, but on the other hand our construction is valid for vector field SPDEs in any dimension without supposing the coefficient is in a separable form, and far more general SPDEs can be covered by our assumptions beyond vector field case, yet \cite{mytnik2011pathwise} is essentially one-dimensional. We believe that the two solution theories are complementary to each other and has their own strength. In particular, we will derive properties of long term behaviors of solutions and also solve SPDEs with a Burgers type non-linearity as well as non-Lipschitz diffusion coefficient. All these results are to be compared to the well-known fact that for a one dimensional SDE, strong uniqueness is guaranteed as long as the diffusion coefficient is at least $1/2$-Hölder continuous. For example, a result of Zvonkin shows that the 1-d SDE $dX_t=b(t,X_t)dt+\sigma(t,X_t)dB_t,X_0=x_0$ where $b$ is bounded measurable, $\sigma$ is bounded continuous with $|\sigma(t,x)-\sigma(t,y)|\leq\sqrt{|x-y|}$ and $|\sigma|\geq c$, has a strong solution. Other similar results may be found in the book \cite{cherny2005singular}.

After the first version of this paper was posted online, the author further generalized the technique of this paper to second order systems: the stochastic wave equation with non-degenerate Hölder diffusion coefficient \cite{HAN2024110224}. The author investigated the well-posedness issue, and proved the convergence of a damped stochastic wave equation to the stochastic heat equation introduced in this paper as the damping parameter tends to $0$.

\subsection{Well-posedness with Burgers type non-linearity}

Now we show that we can go further to the case where we have a drift of Burgers type $(-A)^{1/2}F(t,X_t)$.

\begin{theorem}\label{burgerstheorem1}
    Consider the following evolution equation where $\vartheta\in(0,1)$
\begin{equation}\label{burgersequation}
dX_t=AX_t dt+b(t,X_t)dt+(-A)^{\vartheta}F(t,X_t)dt+\sigma(t,X_t)dW_t,\quad X_0=x\in H\end{equation}
    on a Hilbert space $H$, where $W$ is a space-time white noise on $H$. 
    
    Assume that the operator $A$ satisfies $\mathbf{H}_1$, the drift $b$ satisfies $\mathbf{H}_2$, the diffusion coefficient $\sigma$ satisfies $\mathbf{H}_3$, $\mathbf{H}_4$ and $\mathbf{H}_5$. Moreover, assume that

    $\mathbf{H}_6$. The map $F(t,x):[0,\infty)\times H\to H$ is measurable and satisfies the following condition: for some $\zeta\in(\vartheta,1]$ and $M>0$, 
    \begin{equation}\label{globalglobal}|F(t,x)-F(t,y)|\leq M|x-y|^\zeta,\quad |x-y|\leq 1,\quad t>0,\end{equation} and that \begin{equation}|F(t,x)|\leq M(1+|x|),\quad x\in H,\quad t>0.\end{equation}

    Then there exists a unique weak mild solution to \eqref{burgersequation}. 
\end{theorem}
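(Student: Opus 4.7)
The plan is to lift the Burgers drift onto the scheme of Theorem \ref{theorem1.1} by an approximation-compactness argument. First I would regularize $F$ via $F_n(t,x):=\mathcal{S}(1/n)F(t,x)$. Because $(-A)^{\vartheta}\mathcal{S}(1/n)$ is bounded on $H$, the map $(-A)^{\vartheta}F_n(t,\cdot):H\to H$ is well-defined, inherits $\zeta$-Hölder regularity and linear growth from $F$, and so makes $b+(-A)^{\vartheta}F_n$ verify $\mathbf{H}_2$ and $\mathbf{H}_5$ (with $n$-dependent constants). Theorem \ref{theorem1.1} then supplies a weak mild solution $X^n$ to the regularized SPDE on some filtered probability space.

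Next I would prove a priori moment bounds uniform in $n$. Using the mild formulation, the classical estimate $|(-A)^{\vartheta}\mathcal{S}(t-s)|_{\mathcal{L}(H,H)}\lesssim (t-s)^{-\vartheta}$, the $n$-independent linear growth of $F_n$, and standard stochastic convolution moment inequalities based on $\mathbf{H}_1$, a generalized Gronwall argument exploiting the integrable singularity $(t-s)^{-\vartheta}$ yields $\sup_n\mathbb{E}\sup_{t\leq T}|X^n_t|^p<\infty$ for every $p\geq 1$ and $T>0$. The factorization method, applied both to the stochastic convolution and to the Burgers convolution (again using $\vartheta<1$), then provides uniform Hölder-in-time moments and compact embedding inclusions which deliver tightness of the laws in $C([0,T];H)$.

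Skorokhod representation then produces an almost surely convergent subsequence on a common probability space, and the limit is identified as a weak mild solution of \eqref{burgersequation}: the deterministic convolutions pass by dominated convergence using $F_n\to F$ pointwise in $H$, while the stochastic integral passes by the standard martingale argument relying on continuity of $\sigma$.

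The principal obstacle is uniqueness. Girsanov, as in Theorem \ref{theorem1.1}, removes the $b$-drift without difficulty since $\sigma^{-1}b$ is $H$-valued with linear growth by $\mathbf{H}_4$ and $\mathbf{H}_5$, but it cannot eliminate $(-A)^{\vartheta}F$ because $\sigma^{-1}(-A)^{\vartheta}F$ need not be $H$-valued. My plan is to couple two weak mild solutions on a common space, remove the $b$ terms via Girsanov, and then absorb the Burgers drift by a short-time fixed-point contraction in a weighted Hölder-in-time norm: the excess regularity $\zeta-\vartheta>0$ in $\mathbf{H}_6$ compensates the singularity $(t-s)^{-\vartheta}$ and produces a genuine contraction on small intervals which, iterated, transfers the weak uniqueness granted by Theorem \ref{theorem1.1} to \eqref{burgersequation}.
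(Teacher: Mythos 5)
Your existence argument is sound and essentially parallel to the paper's Appendix \ref{appendixB}: you regularize with $(-A)^{\vartheta}\mathcal{S}(1/n)$ where the paper uses the spectral projections $(-A_m)^{\vartheta}=(-A)^{\vartheta}\pi_m$, but the subsequent steps (uniform moments via the integrable singularity $(t-s)^{-\vartheta}$, tightness through the factorization/compactness of the convolution operators, Skorokhod plus martingale representation) are the same. The gap is in uniqueness. A fixed-point contraction in any norm requires a \emph{Lipschitz} estimate on the map $X\mapsto\int_0^t(-A)^{\vartheta}\mathcal{S}(t-s)F(s,X_s)\,ds$, but $\mathbf{H}_6$ only gives $|F(s,X_s)-F(s,Y_s)|\leq M|X_s-Y_s|^{\zeta}$ with $\zeta<1$, so the best you get is $\|\Phi(X)-\Phi(Y)\|\lesssim T^{1-\vartheta}\|X-Y\|^{\zeta}$. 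This is not a contraction and does not imply uniqueness (compare $\dot x=|x|^{\zeta}$, $x(0)=0$); the excess regularity $\zeta-\vartheta>0$ cannot repair this, because the obstruction is the Hölder (rather than Lipschitz) modulus in the spatial variable, not the time singularity. Moreover, "coupling two weak mild solutions on a common space and subtracting" is a pathwise-uniqueness argument; weak uniqueness concerns laws of solutions possibly living on different spaces, and with $\sigma$ merely Hölder you have no pathwise theory to appeal to. Finally, even after Girsanov removes $b$, you are still left with the Hölder diffusion coefficient, which is the core difficulty of Theorem \ref{theorem1.1} itself and is not "granted" for the equation with the extra Burgers term.

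What the paper actually does is keep the generalized-coupling scheme of Theorem \ref{theorem1.1} intact and absorb the Burgers term into the \emph{contraction step}, not into Girsanov or a Picard iteration. One introduces the control process $\widetilde{X}^n$ with the additional damping drift $\lambda(X_t-\widetilde{X}^n_t)$, $\lambda=(\Delta_K^n)^{\gamma-1}$, and uses the estimate \eqref{burgersusefulestimate}, $\int_0^t e^{-\lambda(t-s)}\|(-A)^{\vartheta}\mathcal{S}(t-s)\|_{op}\,ds\leq C_{\vartheta}\lambda^{\vartheta-1}$, so that the Burgers contribution to $|X_t-\widetilde{X}^n_t|$ is of order $\lambda^{\vartheta-1}(\Delta_K^n)^{\zeta}=(\Delta_K^n)^{(1-\gamma)(1-\vartheta)+\zeta}$; the condition $\zeta>\vartheta$ enters exactly to make this exponent exceed $1$ (see \eqref{burgerscondition2}), so the term is negligible against the stopping threshold $2\Delta_K^n$. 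Girsanov is then applied only to the \emph{bounded} drift $\lambda(X_t-\widetilde{X}^n_t)1_{t\leq\tau}$, at total-variation cost $O((\Delta_K^n)^{\gamma})$, and uniqueness in law follows by showing $\operatorname{Law}(X^n)\to\operatorname{Law}(X)$ for the Lipschitz approximations $X^n$. If you want to salvage your plan, you should replace the fixed-point step by this damping-plus-Girsanov mechanism.
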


\begin{remark}
    In the additive noise case $\sigma=\operatorname{I}_d$ and $b=0$, existence of a unique weak mild solution was proven in \cite{priola2021optimal} in the critical case $\vartheta=\frac{1}{2}$ for all $\zeta\in(0,1]$ via an optimal regularity result for the Kolmogorov equation. Our method can deal with diffusion coefficients that are merely Hölder continuous, yet we restrict ourselves to the more regular regime $\zeta\in(\frac{1}{2},1].$

The method of \cite{priola2021optimal} however does not go beyond $\vartheta=\frac{1}{2}$, and the whole range $\vartheta\in(\frac{1}{2},1)$ is left open. see for example \cite{priola2022correction}, Remark 4. To our best knowledge, the result of Theorem \ref{burgerstheorem1} is new even for additive noise whenever $\vartheta\in(\frac{1}{2},1)$. In \cite{priola2022correction}, Remark 4 the author predicts that there exists some $\zeta$-Hölder ($\zeta>0$) function $F$ such that weak uniqueness no longer holds. Our theorem 
   \ref{burgerstheorem1} does not contradict such predictions, as we only prove weak uniqueness whenever $\zeta>\vartheta$. The other range $\zeta\in(0,\vartheta)$ is not covered here.
The constraint $\zeta\geq\vartheta$ is most likely a consequence of the method we use in this paper, and in particular the constraint arises from the $\lambda^{\vartheta-1}$ factor in the estimate \eqref{burgersusefulestimate}.
   
\end{remark}

We also note that \cite{priola2021optimal} made the assumption that $b$ and $F$ are locally Hölder continuous functions, which means they are Hölder continuous on each ball, whereas we make a global Hölder continuity assumption \eqref{globalglobal}. While this may set a limit to the generality of our results, we believe that our techniques can also be generalized to locally Hölder continuous coefficients via a proper truncation argument. We do not pursue this generalization so as to keep this paper at a reasonable length.

Typical examples of SPDEs that fall within \eqref{burgersequation} are Burgers type equations \footnote{Let $A$ be the Laplacian on $(0,2\pi).$ Setting $F=\frac{\partial}{\partial\xi}(-A)^{-1/2}h$ in \eqref{burgersequation} transforms \eqref{burgersequation} to this equation, and a direct computation shows that $\frac{\partial}{\partial\xi}(-A)^{-1/2}$ is a bounded operator on $L^2((0,2\pi))$. See \cite{prato2003new}, Remark 4.3 and Example 4.4 for related computations.}
\begin{equation}\label{exbur}du(t,\xi)=\frac{\partial^2}{\partial\xi^2}u(t,\xi)dt+\frac{\partial}{\partial \xi} h(u(t,\xi))dt+\sigma(u(t,\xi))dW_t(\xi),u(0,\xi)=u_0(\xi),\quad \xi\in(0,2\pi),\end{equation}
and the Cahn-Hilliard equations with stochastic forcing
\begin{equation}\label{exlug}du(t,\xi)=-\Delta_\xi^2 u(t,\xi)dt+\Delta_\xi h(u(t,\xi))dt+\sigma(u(t,\xi))dW_t(\xi),u(0,\xi)=u_0(\xi) \text{ on } G,\end{equation}
given some open, regular bounded set $G\subset\mathbb{R}^d$, $d=1,2,3$.

\subsection{Long-time behaviour of solutions}

Now we investigate the long time behavior of the solutions we just constructed. 

Assuming that the coefficients $b$, $\sigma$ and $F$ do not depend on time. Then it is easy to check that the solutions to the SPDE \eqref{evolutionequation} and \eqref{burgersequation} define a time homogeneous Markov process on $H$ and satisfy the Feller property. This can be seen from approximating the SPDE by another SPDE with Lipschitz coefficients, and the Markov and Feller property follow from taking the approximation limit.

Assume further the existence of a Lyapunov function, we can show existence and uniqueness of the invariant measure and the existence of a spectral gap.

\begin{theorem}\label{harristheorem}
Assume the assumptions of Theorem \ref{theorem1.1} are satisfied, and coefficients $b$, $\sigma$ do not depend on time. Denote by $\mathbb{E}_x$ the law of the solution $(X_t)_{t\geq 0}$ to \eqref{evolutionequation} with $X_0=x$, and denote by $(\mathcal{P}_t)_{t\geq 0}$ the Markov semigroup generated by $X_t$ on the Hilbert space $H$.

Assume that the following Lyapunov condition holds: for some $t_0>0$, 
\begin{equation}\label{lyapunov1}\mathbb{E}_x V(X_{t_0})-V(x)\leq -cV(x)+C_V,\quad x\in H,\end{equation}
here $V:H\to[1,+\infty)$ is a continuous Lyapunov function, the constants $c\in(0,1)$ and $C_V\in\mathbb{R}$ are fixed and independent of $x$. Assume further that $V(x)\to\infty$ as $|x|\to\infty$.

Then there exists a unique invariant measure $\pi$ for the Markov process $(X_t)_{t\geq 0}$. Moreover, we have the following exponential convergence result:

For any positive $\gamma\in(0,1]$ we set $$d_\gamma(x,y):=|x-y|^\gamma\wedge 1.$$  Meanwhile, for any two Borel probability measures $\mu,\nu$ on $H$, we also write $d_\gamma(\mu,\nu)$ the Wasserstein distance between the measures $\mu$, $\nu$ with respect to the distance $d_\gamma$. Then for any $\gamma\in(0,1]$ and $\epsilon>0$ there exists $\xi>0,C>0$ such that 
    \begin{equation}\label{ratefinal}
d_\gamma(\operatorname{Law}(X_t^x),\pi)\leq Ce^{-\xi t}V^{1-\epsilon}(x),\quad t>0,
    \end{equation}
where $X_t^x$ denotes the solution $X_t$ with initial data $x\in H$.
\end{theorem}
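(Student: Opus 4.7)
The argument combines a Lyapunov-based tightness estimate, a generalized coupling built from the non-degeneracy of $\sigma$, and the weak Harris theorem of Hairer--Mattingly.

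\emph{Step 1 (Existence of $\pi$).} Iterating \eqref{lyapunov1} along multiples of $t_0$ yields $\sup_{n\in\mathbb{N}}\mathbb{E}_x V(X_{nt_0})\leq V(x)+C_V/c$, and interpolating through the mild formulation produces $\sup_{t\geq 0}\mathbb{E}_x V(X_t)<\infty$. Assumption $\mathbf{H}_1$ ensures that both $\mathcal{S}(t)x$ and the stochastic convolution $\int_0^t\mathcal{S}(t-s)\sigma(X_s)\,dW_s$ take values in $\mathcal{D}((-A)^\kappa)$ for some $\kappa>0$, with moments controlled by $\mathbb{E}_x V(X_s)$, and the embedding $\mathcal{D}((-A)^\kappa)\hookrightarrow H$ is compact by the summability assumption in $\mathbf{H}_1$. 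Tightness of the Cesàro averages $\frac{1}{T}\int_0^T\operatorname{Law}(X_s^x)\,ds$ follows, and the Feller property (already established) identifies every weak limit as an invariant measure.

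\emph{Step 2 (Wasserstein contraction on sublevel sets of $V$).} The central estimate is: for every $R>0$ there exist $T_\star>0$ and $\alpha\in(0,1)$ such that
\[
d_\gamma\bigl(\operatorname{Law}(X_{T_\star}^x),\operatorname{Law}(X_{T_\star}^y)\bigr)\leq \alpha\,d_\gamma(x,y),\qquad V(x)+V(y)\leq R.
\]
I would build this via a generalized coupling in the spirit of Hairer--Mattingly and Butkovsky--Kulik--Scheutzow. On a single probability space I run $X^x$ driven by $W$, and a shifted process $Y$ driven by $\widetilde W_s=W_s-\int_0^s h_r\,dr$ with the feedback control
\[
h_s=\sigma^{-1}(Y_s)\,\phi(s)\,(X_s^x-Y_s),
\]
for a deterministic rate $\phi$ to be tuned. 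Assumption $\mathbf{H}_4$ (uniform bound on $\sigma^{-1}$) together with the a priori estimates of Theorem \ref{theorem1.1} ensures $h\in L^2([0,T_\star];H)$ with a Novikov moment that is uniform over $\{V(x)+V(y)\leq R\}$, so Girsanov identifies the law of $Y$ under the new measure with $\operatorname{Law}(X^y)$, and weak uniqueness from Theorem \ref{theorem1.1} makes this identification rigorous. The difference $Z_s=X_s^x-Y_s$ then satisfies a mild equation whose linear part is contracted by $\mathcal{S}(s)$ and whose Hölder remainders coming from $b$ and $\sigma$ are absorbed by the factorization and stochastic Fubini estimates underlying Theorem \ref{theorem1.1}, yielding the claimed $\alpha<1$ at a sufficiently large $T_\star$.

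\emph{Step 3 (Weak Harris and passage to $V^{1-\epsilon}$).} With the Lyapunov inequality and the contraction of Step 2 in place, I apply the Hairer--Mattingly weak Harris theorem to the bounded distance $d_\gamma$: the semigroup becomes a strict contraction in the weighted Wasserstein distance
\[
\widetilde d_\beta(\mu,\nu)=\inf_\Pi\int d_\gamma(x,y)\bigl(1+\beta V(x)+\beta V(y)\bigr)\,\Pi(dx,dy)
\]
for some small $\beta>0$. This yields uniqueness of $\pi$ and
\[
d_\gamma(\operatorname{Law}(X_t^x),\pi)\leq C e^{-\xi t}(1+V(x)).
\]
To reach \eqref{ratefinal}, interpolate the exponential bound with the trivial bound $d_\gamma\leq 1$: writing $d_\gamma\leq (C e^{-\xi t}(1+V(x)))^{1-\epsilon}\cdot 1^{\epsilon}$ and using $V\geq 1$ produces $C' e^{-\xi(1-\epsilon)t} V^{1-\epsilon}(x)$, with the exponent $\xi$ of the theorem being the renormalized $\xi(1-\epsilon)$.

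\emph{Main obstacle.} The sticking point is the Wasserstein contraction in Step 2. Pathwise coupling is unavailable because $\sigma$ is only Hölder and no strong solution is available, so the coupling must live at the level of laws and be propagated through a Girsanov shift. The admissibility of that shift and the quantitative decay of $|Z_s|$ must be controlled jointly: this is precisely where the threshold $\beta>1-\eta_0/2$ in $\mathbf{H}_3$ and the boundedness of $\sigma^{-1}$ in $\mathbf{H}_4$ are essential, since weaker regularity or degeneracy of $\sigma$ would either destroy the Novikov bound on $h$ or prevent the Hölder error terms in $Z_s$ from being absorbed.
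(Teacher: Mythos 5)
Your overall architecture (generalized coupling with a feedback drift removed by Girsanov, plus the weak Harris theorem with the Lyapunov function, plus the final interpolation to get $V^{1-\epsilon}$) is the same as the paper's, and your Step 1 and Step 3 are essentially sound — indeed your interpolation $d_\gamma\le (Ce^{-\xi t}(1+V))^{1-\epsilon}\cdot 1^{\epsilon}$ is a legitimate shortcut around the paper's appeal to the subgeometric Harris theorem of Butkovsky/Kulik. However, Step 2 as written contains two genuine gaps.

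First, the contraction cannot be obtained in the metric $d_\gamma$ itself. With the feedback rate $\lambda=|x-y|^{\gamma-1}$, the Girsanov/total-variation cost of removing the control is of order $CT_\star^{1/2}|x-y|^{\gamma}$ — the \emph{same} order as $d_\gamma(x,y)$ — and since $d_\gamma\le 1$ this failure event contributes a full $CT_\star^{1/2}|x-y|^\gamma$ to the coupled expectation. The resulting factor is $2^{-\gamma}+CT_\star^{1/2}+o(1)$, which is not below $1$ for any fixed constants; and taking $T_\star$ \emph{large}, as you propose, makes the Girsanov cost larger, not smaller. The paper's resolution is to work with the rescaled metric $d_{N,\gamma}(x,y)=(N|x-y|^\gamma)\wedge 1$ and choose $N$ large: the TV term then contributes only $(C/N)\,d_{N,\gamma}(x,y)$, while the successful-coupling term still contributes $2^{-\gamma}d_{N,\gamma}(x,y)$, so the sum is a genuine contraction. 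This is exactly why the estimate on $\xi_2,\xi_3$ must be in total variation rather than Wasserstein, and why the metric must be modified; the final statement in $d_\gamma$ is recovered only at the end via $d_{\gamma'}\le d_\gamma\le d_{N,\gamma}$.

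Second, your claimed contraction for \emph{all} pairs with $V(x)+V(y)\le R$ does not follow from the coupling: the Hölder error terms are powers of $|x-y|$ strictly greater than $1$ and are only negligible relative to $|x-y|^\gamma$ when $|x-y|$ is small, and for distant pairs the Girsanov cost is $O(1)$. What the weak Harris theorem actually needs for distant pairs is $d$-smallness of the sublevel set $\{V\le 4C_V\}$, and this requires a separate irreducibility argument. The paper supplies it by adding a strong damping $-\lambda X_t\,dt$ to force the solution into a small ball with probability $\ge 1/2$ (Lemma 3.8), transferring this to the original dynamics via the relative-entropy inequality $\nu(A)\ge \mu(A)/N-(H_{\mathrm{rel}}(\mu\mid\nu)+\log 2)/(N\log N)$, and then using an independent coupling. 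Without this ingredient your application of the weak Harris theorem in Step 3 is not justified.
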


This result generalizes as well for Burgers type SPDEs. We prove the following:

\begin{theorem}\label{burgerstheorem2}
    Assume that all the assumptions of Theorem \ref{burgerstheorem1} are satisfied, and that all the coefficients $b,\sigma,F$ do not depend on time. Then the conclusion of Theorem \ref{harristheorem} is true for $(X_t^x)$, the solution to the Burgers SPDE \eqref{burgersequation}. That is, given a suitable Lyapunov function satisfying \eqref{lyapunov1}, we can deduce the exponential convergence rate \eqref{ratefinal}.
\end{theorem}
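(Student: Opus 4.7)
The plan is to transport the ergodicity argument of Theorem \ref{harristheorem} to the Burgers-type equation \eqref{burgersequation}, verifying at each step that the additional drift $(-A)^\vartheta F(t,X_t)$ does not destroy any of the estimates. First I would check that the solution of \eqref{burgersequation} is a time-homogeneous Feller Markov process on $H$. This can be done exactly as sketched in the excerpt for \eqref{evolutionequation}: mollify $b$, $\sigma$, and $F$ to obtain coefficients that are globally Lipschitz (keeping linear growth), for which classical theory (e.g.\ \cite{da2014stochastic}, with the Burgers term handled via $\vartheta<1$ and the fractional smoothing of $\mathcal{S}(t)$) provides a unique Markov-Feller mild solution, and then pass to the limit using the weak uniqueness granted by Theorem \ref{burgerstheorem1}.

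Next, I would invoke the Lyapunov bound \eqref{lyapunov1} together with the assumption $V(x)\to\infty$ as $|x|\to\infty$ to deduce tightness of the time-averaged laws $\frac{1}{T}\int_0^T \mathcal{P}_t^*\delta_x\,dt$. Combined with the Feller property, Krylov--Bogolyubov then yields an invariant measure $\pi$. For uniqueness and the quantitative rate \eqref{ratefinal}, I would apply a Hairer--Mattingly type semigroup contraction criterion: a Lyapunov function as above plus a local Wasserstein contraction of $\mathcal{P}_t$ in the metric $d_\gamma$ on sub-level sets $\{V\leq R\}$ imply an exponential rate of convergence to $\pi$ in $d_\gamma$, with prefactor $V^{1-\epsilon}(x)$ for arbitrarily small $\epsilon>0$.

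The core work is therefore establishing the local $d_\gamma$-contraction $d_\gamma(\mathcal{P}_t \delta_x,\mathcal{P}_t\delta_y)\le \kappa\, d_\gamma(x,y)$ with some $\kappa<1$ on $\{V\le R\}$ for an appropriately chosen $t>0$. For Theorem \ref{harristheorem} this is produced by a coupling built upon the uniform non-degeneracy of $\sigma$ (assumption $\mathbf{H}_4$), i.e.\ a Girsanov-type synchronization of the noises driving the two copies of the SPDE. I would recycle this coupling verbatim for the Burgers problem; the only new contribution in the mild formulation of the difference $X_t^x-X_t^y$ is
\begin{equation*}
\int_0^t (-A)^{\vartheta}\mathcal{S}(t-s)\bigl[F(s,X_s^x)-F(s,X_s^y)\bigr]\,ds,
\end{equation*}
and it is controlled using the standard estimate $|(-A)^{\vartheta}\mathcal{S}(t-s)|_{\mathcal{L}(H,H)}\lesssim (t-s)^{-\vartheta}$ together with the Hölder bound $|F(s,X_s^x)-F(s,X_s^y)|\le M|X_s^x-X_s^y|^{\zeta}$ from $\mathbf{H}_6$. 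Since $\zeta>\vartheta$ (so the singularity $(t-s)^{-\vartheta}$ is integrable and the contribution has strictly better time-regularity than one needs), the extra term enters the Gronwall-type closure as a small perturbation on short times, leaving the coupling contraction intact.

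The main obstacle is precisely this: the Burgers drift is \emph{unbounded} on $H$, so one must consistently bookkeep the fractional smoothing factors in all the a priori moment bounds used inside the coupling --- in particular when deriving tightness of the coupled pair in the ball $\{V\le R\}$ and when bounding higher moments needed for Girsanov exponentials. Once these bounds are in place (which is essentially the content of the construction used in Theorem \ref{burgerstheorem1}), combining the local $d_\gamma$-contraction with the Lyapunov estimate yields \eqref{ratefinal} verbatim as in the proof of Theorem \ref{harristheorem}.
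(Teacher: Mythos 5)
Your architecture matches the paper's: a generalized (Kulik--Scheutzow) coupling with a Girsanov total-variation comparison, fed into a Harris/Hairer--Mattingly criterion with the Lyapunov function. But the key step is carried by a mechanism you have misidentified. You propose to absorb the new term via a ``Gronwall-type closure,'' and you justify the restriction $\zeta>\vartheta$ by integrability of the singularity $(t-s)^{-\vartheta}$. Neither works: $(t-s)^{-\vartheta}$ is integrable for every $\vartheta\in(0,1)$ regardless of $\zeta$, and a Gronwall iteration cannot close when $b$, $\sigma$, $F$ are merely H\"older (there is no linear bound on increments to iterate). The paper's contraction instead comes from the damping term $\lambda(X_t-\widetilde{Y}_t)1_{t\le\tau}$ in the auxiliary process with the specific choice $\lambda=|x-y|^{\gamma-1}$, the stopping time $\tau$ that keeps $|X_t-\widetilde{Y}_t|\le 2|x-y|$, and the maximal inequality of Theorem \ref{stochasticinte}. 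The Burgers term is then controlled by $\int_0^t e^{-\lambda(t-s)}\|(-A)^{\vartheta}\mathcal{S}(t-s)\|_{op}\,ds\le C_\vartheta\lambda^{\vartheta-1}$ (estimate \eqref{burgersusefulestimate}), which contributes $\lambda^{\vartheta-1}|x-y|^{\zeta}=|x-y|^{(1-\gamma)(1-\vartheta)+\zeta}$; the whole point of $\zeta>\vartheta$ is that one can then choose $\gamma$ small so that $(1-\gamma)(1-\vartheta)+\zeta>1$, i.e.\ the correction is $o(|x-y|)$ and the exponential decay $e^{-\lambda t}|x-y|$ survives. Without this exponent bookkeeping the contraction estimate \eqref{burgersdeviationestimate} is not established. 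Your concern about Girsanov exponentials and the unboundedness of $F$ is, by contrast, a non-issue: $F$ appears identically in $Y$ and $\widetilde{Y}$ and cancels in the density; the shift is $\sigma^{-1}\lambda(X_t-\widetilde{Y}_t)1_{t\le\tau}$, bounded by construction.

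Two further gaps. First, Krylov--Bogolyubov does not apply as you state it: in infinite dimensions, $V$ continuous with $V(x)\to\infty$ only makes sub-level sets bounded, not compact, so tightness of the time-averaged laws does not follow from the Lyapunov bound alone. The paper obtains existence of $\pi$ differently, from the contraction \eqref{iterativeexist} and completeness of the weighted Wasserstein metric (a Cauchy-sequence argument), and then time-averages to get invariance for all $t$. Second, the Harris criterion also needs $d$-smallness of the sub-level set, i.e.\ the contraction \eqref{22111} for \emph{distant} $x,y$ with $|x|,|y|\le D$; this is not produced by the synchronized coupling but by the accessibility estimate (Lemma \ref{lemma3.1} adapted to the Burgers drift via \eqref{burgersusefulestimate}, the relative-entropy inequality, and an independent coupling as in \eqref{burgerslargecontraction}). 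Your proposal does not address how this irreducibility-type ingredient transfers to \eqref{burgersequation}.
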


\begin{remark}\label{remark111} We give examples for the claimed Lyapunov condition \eqref{lyapunov1}.

Consider the most typical example
\begin{equation}\label{*****}dX_t=A X_t dt+b(X_t)dt+\sigma(X_t)dW_t,\end{equation} where 
$b$ and $\sigma$ are both bounded, and $A$, $b$, $\sigma$ satisfy assumptions $\mathbf{H}_1$ to $\mathbf{H}_5$. The existence of a Lyapunov functional can be verified by taking $V(x):=|x|+1,x\in H$. See Appendix \ref{appendixC} for a proof.

For Burgers type SPDEs, one may also consider
$$dX_t=A X_t dt +(-A)^{\vartheta}F(X_t)dt+\sigma(X_t)dW_t,$$ where $F$ and $\sigma$ are bounded, and $A$, $F$, $\sigma$ satisfy assumptions $\mathbf{H}_1$ to $\mathbf{H}_6$. Then $V(x):=|x|+1$ can again be chosen as a Lyapunov function. The proof is deferred to Appendix \ref{appendixC}.\end{remark}

\subsection{A Banach space example}
\label{section240}
The aforementioned setting, i.e. SPDEs on an abstract Hilbert space, excludes some important SPDE examples such as the stochastic reaction-diffusion equation with an interaction term of polynomial growth. The reaction term cannot be regarded as a map from $L^2(0,1)$ to itself. In this subsection we extend our framework to such an example. Consider the following $\mathbb{R}^r$-valued SPDE
\begin{equation}\label{lastexample}
    \begin{cases}
\frac{\partial u_i}{\partial t}(t,\xi)=\frac{\partial^2u_i}{\partial x_i^2}(t,\xi)+f_i(t,\xi,\vec{u}(t,\xi))+g_i(t,\xi,\vec{u}(t,\xi))\frac{\partial}{\partial t}W_i(t,\xi),t\geq 0,\xi\in(0,1),\\
u_i(0,\xi)=x_i(\xi),\xi\in (0,1),\quad  u_i(t,\xi)=0,\quad t>0,\xi\in\{0,1\}
    \end{cases}
\end{equation}
where $W_i(t,\xi),i=1,\cdots,r$ are 1-d space-time white noises on $L^2([0,1];\mathbb{R})$ and $x_i(\xi)\in \mathcal{C}([0,1];\mathbb{R})$ for each $i$. We use the notation $\vec{u}(t,\xi)$ to denote the vector $$\vec{u}(t,\xi)=(u_1(t,\xi),u_2(t,\xi),\cdots,u_r(t,\xi)).$$

We use the following notation to put this SPDE in our previous framework: we set $H=L^2([0,1];\mathbb{R}^r)$, set $H_0=\mathcal{C}([0,1];\mathbb{R}^r)$, set $A=(\frac{\partial^2}{\partial x_1^2},\cdots,\frac{\partial^2}{\partial x_r^2})$ the vector of Laplacian, set $\mathcal{S}$ the semigroup generated by $A$, set $X(t)\in H$ to be the solution $\vec{u}(t)$, set $F(t,X_t)$ to be the drift vector $(f_i(t,\xi,\vec{u}(t,\xi)))_{i=1}^r$, and set $G(t,X_t)$ to be the diagonal matrix with diagonal terms $g_i(t,\xi,\vec{u}(t,\xi))$ and regard this matrix as an element of $\mathcal{L}(H_0,H).$ Then a solution to this SPDE can be written in the mild form
\begin{equation}
    X_t=\mathcal{S}(t)x+\int_0^t \mathcal{S}(t-s)F(s,X_s)ds+\int_0^t \mathcal{S}(t-s)G(s,X_s)dW_s,
\end{equation}
where $x\in H_0$ denotes the initial condition. Then we prove the following uniqueness and continuity result.
 Note that the functions $f_i,g_i$ are real-valued, so in the following statement the symbol $|f_i(t,\xi,\sigma)|,|g_i(t,\xi,\sigma)|$ simply takes the absolute value of real numbers $f$ and $g$. Also, for $\mathbb{R}^r$-valued vectors $\sigma$, we use the notation $|\sigma|$ to represent its Euclidean norm in $\mathbb{R}^r$. Thus what the symbol $|\cdot|$ means will be very clear from the context.
\begin{theorem}\label{theorem1.8}
    Assume the coefficients $f_i:[0,\infty)\times[0,1]\times\mathbb{R}^r\to\mathbb{R},$ $g_i:[0,\infty)\times[0,1]\times\mathbb{R}^r\to\mathbb{R}$ satisfy the following assumptions:
\begin{enumerate}
    \item(Non-degeneracy and boundedness) There exists $0<C_1<C_2$ such that for any $t>0,\xi\in(0,1)$, $\sigma\in\mathbb{R}^r$ and $i=1,\cdots,r$ we have
    \begin{equation}\label{twosidedboundsg2}
        C_1\leq |g_i(t,\xi,\sigma)|\leq C_2.
    \end{equation}

    \item (Hölder regularity for $g_i$) There exists some $\beta>\frac{3}{4}$ and $M>0$ such that
    \begin{equation}
         \sup_{t>0,\xi\in(0,1),i\in[r]}|g_i(t,\xi,\sigma)-g_i(t,\xi,\sigma')|\leq M|\sigma-\sigma'|^\beta,\quad \sigma,\sigma'\in\mathbb{R}^r,
    \end{equation}
    \item (Separating the drift) The drift $f_i(t,\xi,\sigma)$ can be decomposed into two terms: $$f_i(t,\xi,\sigma)=h_i(t,\xi,\sigma)+k_i(t,\xi,\sigma[i]),\quad \sigma\in\mathbb{R}^r,$$ where $\sigma[i]$ is the $i$-th component of $\sigma$,
\item (Local Hölder regularity for $h_i$, $k_i$) We can find some $\alpha\in(0,1)$ such that, for each $n\in\mathbb{N}_+$ we can find $M_n>0$ such that 
   \begin{equation}
       \sup_{t>0,\xi\in(0,1),i\in[r]}|h_i(t,\xi,\sigma)-h_i(t,\xi,\sigma')|\leq M_n|\sigma-\sigma'|^\alpha, 
   \end{equation}
    \begin{equation}
       \sup_{t>0,\xi\in(0,1),i\in[r]}|k_i(t,\xi,\sigma[i])-k_i(t,\xi,\sigma'[i])|\leq M_n|\sigma-\sigma'|^\alpha, 
   \end{equation}
for any $\sigma,\sigma'\in\mathbb{R}^r$ with $|\sigma|\leq n,|\sigma'|\leq n.$
\item (Linear growth for $h_i$) We assume that there exists some finite $C_3>0$ such that
\begin{equation}\label{linearhi}
    \sup_{t>0,\xi\in(0,1),i\in[i]}|h_i(t,\xi,\sigma)|\leq C_3(1+|\sigma|),\quad \sigma\in\mathbb{R}^r.
\end{equation}

 \item (Polynomial growth for $k_i$) For some $C_4>0$ and $m\geq 1$ we have
    \begin{equation}\label{polyki}
        \sup_{t>0,\xi\in(0,1),i\in[r]}|k_i(t,\xi,\sigma[i])|\leq C_4(1+|\sigma|^m),\quad \sigma\in\mathbb{R}^r
    \end{equation}
    
   \item (Dissipative condition for $k_i$) For some finite $C_5>0$ we have for each $i=1,\cdots,r$, any $\xi\in(0,1)$ and any $\sigma,\rho\in\mathbb{R}^r,$
\begin{equation}\label{dissipative336}
    k_i(t,\xi,\sigma[i])-k_i(t,\xi,\rho[i])=\lambda_i(t,\xi,\sigma,\rho)(\sigma[i]-\rho[i])
\end{equation}
where 
$$
\sup_{t,\xi,\sigma,\rho,i}\lambda_i(t,\xi,\sigma,\rho)<C_5<\infty.
$$
    \end{enumerate}
    Then there exists a unique weak-mild solution to the SPDE \eqref{lastexample}. (It will be proved that almost surely the solution $(\xi\to \vec{u}(t,\xi),\xi\in(0,1))\in H_0$, and thus $(f_i(t,\xi,\vec{u}(t,\xi)))_{1\leq i\leq r}$ is also well-defined and, as a function of $\xi$, lies in $H_0\subset H.$)  Moreover, we prove the following continuity result of solutions with respect to the norm $H_0$: we can define a Wasserstein distance $d$ on the Borel probability space of $H_0$ such that, for any initial condition $x,y\in H_0$ denote by $X_t^x,X_t^y$ the solution to the SPDE with initial value $x,y$ respectively. Then as $x\to y$ in $H_0$, the law of solution $X_t^x$ converges to $X_t^y$ with respect to the Wasserstein distance d:
    \begin{equation}
        d(\operatorname{Law}(X_t^x),\operatorname{Law}(X_t^y))\to 0,\quad |x-y|_{H_0}\to 0.
    \end{equation} Note that $d$ is defined with respect to a distance on $H_0$ rather than $H$.
\end{theorem}

For example, the dissipative condition (7) is satisfied by $k_i(t,\xi,x)=-x^{2n+1}$ for any $n\in\mathbb{N}_+$ but is not satisfied by $k_i(t,\xi,x)=x-x^2$. That is, $k_i$ always tries to drive the dynamics towards zero except for a linear growth part.
For example when $\sigma[i]$ is positive and large, then $k_i(t,\xi,\sigma[i])$ can increase at most linearly in $\sigma[i]$ (if it increases in $\sigma[i]$ faster than any linear function, then in \eqref{dissipative336}, the function $\lambda_i$ cannot be bounded from above) but can decrease polynomially fast in $\sigma[i]$. This dynamics ensure the solution will not blow up in finite time. The long-term behavior of this SPDE can be analyzed via techniques in this paper, but we prefer to leave it for a further investigation to keep this paper short. 

We also note that in \eqref{twosidedboundsg2} we assumed a two-sided bound on the diffusion coefficient $g_i$ and excluded coefficients of linear growth. This restriction arises from the proof technique, originating from \cite{cerrai2003stochastic}. See \cite{cerrai2003stochastic}, Theorem 5.3 and (5.7) therein for a more precise condition.
\subsection{Plan of the paper}
The paper is organized as follows. In Section \ref{section2!} we prove all the well-posedness results for Hilbert space valued SPDEs. In Section \ref{section3!} we prove results concerning long-time behavior of solutions. In Section \ref{section4} we consider the reaction-diffusion equation with polynomial non-linearity. In the appendix we prove some technical lemmas. The proof in Section \ref{section2!} is self-contained, yet for the proof in Section \ref{section3!} we will apply some ergodic theoretical results from \cite{hairer2011asymptotic}. In section \ref{section4}, we prove Theorem \ref{theorem1.8}, where some analytical estimates from \cite{cerrai2003stochastic} will be used.

\section{Well-posedness of weak mild solutions}\label{section2!}

We first give a brief outline of the proof, and focus on all the technical difficulties that arise in infinite dimensions.

The method of this paper is inspired by a generalized coupling technique introduced by Kulik and Scheutzow in \cite{kulik2020well}. In the first step, we find Lipschitz continuous functions $b^n,\sigma^n$ approximating $b$ and $\sigma$, and consider the following SPDE 
$$dX^n_t=A X^n_t dt+b^n(t,X^n_t)dt+\sigma^n(t,X^n_t)dW_t,\quad X^n_0=x,$$
which has a strong solution thanks to the Lipschitz continuity of $b^n$ and $\sigma^n$. The aim of proof is to show that the law of $X_t^n$ converges to the law of $X_t$, so that, given the law of $X_t^n$ is uniquely defined, any candidate solution to $X_t$ should have the same law, i.e. weak uniqueness holds.
However, it is difficult to compare $X_t$ with $X_t^n$ directly. We introduce an auxiliary stochastic process on this probability space via solving the following SPDE
$$d\widetilde{X}^n_t=A\widetilde{X}^n_t dt+b^n(t,\widetilde{X}^n_t)dt+\lambda (X_t-\widetilde{X}_t^n)dt+\sigma^n(t,\widetilde{X}^n_t)dW_t,\quad \widetilde{X}_0^n=x,$$
where $\lambda>0$ is a control parameter that we will set to be very large, and also we will only solve the SPDE $d\widetilde{X}_t^n$ up to some stopping time $\tau$, where the stopping time $\tau$ depends on $X_t$ and $X_t^n$. We will use a pathwise argument to control the distance of $X_t$ and $\widetilde{X}_t^n$, and use a Girsanov transform of measure to control the distance of $X_t^n$ and $\widetilde{X}_t^n$. Combining both controls in a skillful manner will prove the weak uniqueness statement.

Compared with the stochastic delay equation setting in \cite{kulik2020well}, a number of new difficulties arise.
\begin{enumerate} 
 \item First, in infinite dimensions we need particular effort to approximate a continuous map by a Lipschitz one, and to guarantee that the solution stays in some compact subset within finite time horizon. This approximation is taken care of in Proposition \ref{seprateagae} and \ref{proposition1.2}.
 
 \item Second, Itô's formula is not readily applicable to these SPDEs with space-time white noise, thus when we derive a contraction estimate we need very special care to use the stochastic factorization lemma efficiently. The results proved in Section \ref{2.1.3} can be regarded as a maximal inequality of Ornstein-Uhlenbeck semigroups in infinite dimensions, and is thus of independent interest.
 \item Third, in infinite dimensions the verification of Lyapunov condition is not as easy as before, again due to lack of Itô's formula, so we only check the most simple cases in Appendix \ref{appendixC}. We also need some care for the term $(-A)^\vartheta$, which is taken care of in Section \ref{sectionburgures}.
\end{enumerate}

\subsection{Preliminaries} In this subsection we list a number of preparatory results that will be used in the proof.

\subsubsection{Approximation by Lipschitz functions}\label{subsection2.112}  In this paper we work abstractly with mappings on a Hilbert space that are not Lipschitz, and we will frequently need to approximate such mappings by Lipschitz mappings. The following Proposition from Alex Ravsky \cite{610367} would be very useful:

\begin{proposition}\label{seprateagae}
    Consider a separable Hilbert space $V$, a normed space $V'$ and a continuous mapping $f:V\to V'$.  Then there exists a sequence $\{f_n\}$ that are bounded Lipschitz maps $f_n:V\to V'$ such that $f_n$ converges uniformly to $f$ on each compact subset of $V$.
\end{proposition}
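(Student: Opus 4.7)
The plan is to reduce the approximation problem to a finite-dimensional one by projecting $V$ onto an exhausting sequence of compact convex subsets. By separability, fix an orthonormal basis $\{e_i\}_{i\geq 1}$ of $V$ and set $V_n:=\operatorname{span}\{e_1,\dots,e_n\}$ and $K_n:=V_n\cap\overline{B_V(0,n)}$. Each $K_n$ is a closed convex bounded subset of a finite-dimensional subspace, hence compact in $V$, and the nearest-point projection $\pi_n:V\to K_n$ is $1$-Lipschitz by the convex projection theorem in Hilbert spaces. A standard $\epsilon$-net argument, using that $\overline{\bigcup_n V_n}=V$ and that any compact $L\subset V$ is totally bounded and norm-bounded, shows $\sup_{x\in L}\|\pi_n(x)-x\|_V\to 0$, i.e.\ $\pi_n\to \operatorname{id}_V$ uniformly on compact sets.

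On each compact $K_n$ the restriction $f|_{K_n}$ is bounded and uniformly continuous, so we may choose $\delta_n>0$ with $\|f(x)-f(y)\|_{V'}<1/n$ whenever $x,y\in K_n$ and $\|x-y\|<\delta_n$. Fix a finite cover $\{B(x_i^n,\delta_n)\}_{i=1}^{N_n}$ of $K_n$, and build an associated Lipschitz partition of unity $\{\phi_i^n\}_{i=1}^{N_n}$ on $K_n$ (standard in finite dimensions: take the hats $\psi_i^n(x):=\max\{0,\delta_n-\|x-x_i^n\|\}$, which are $1$-Lipschitz, and normalize by their sum, which is bounded below on the compact $K_n$). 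Set
\[
g_n(x):=\sum_{i=1}^{N_n}\phi_i^n(x)\,f(x_i^n),\qquad x\in K_n,
\]
a bounded Lipschitz map $K_n\to V'$ with $\sup_{x\in K_n}\|g_n(x)-f(x)\|_{V'}<1/n$. Finally define
\[
f_n(x):=g_n\bigl(\pi_n(x)\bigr),\qquad x\in V,
\]
which, as a composition of Lipschitz maps, is Lipschitz on $V$, and is bounded because $\sup_V\|f_n\|_{V'}\leq \sup_{K_n}\|f\|_{V'}+1<\infty$.

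For uniform convergence on a compact set $L\subset V$, the continuity of $f$ upgrades by a finite subcover argument to neighborhood-uniform continuity: for each $\epsilon>0$ there exists $\delta>0$ with $\|f(y)-f(x)\|_{V'}<\epsilon$ for all $x\in L$ and $y\in V$ with $\|y-x\|<\delta$. Choosing $n$ large enough that $1/n<\epsilon$ and $\sup_{x\in L}\|\pi_n(x)-x\|<\delta$, the triangle inequality yields
\[
\|f_n(x)-f(x)\|_{V'}\leq \|g_n(\pi_n(x))-f(\pi_n(x))\|_{V'}+\|f(\pi_n(x))-f(x)\|_{V'}<\tfrac{1}{n}+\epsilon
\]
uniformly in $x\in L$. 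The main technical point is the construction of $g_n$: although no general Lipschitz extension theorem is available for an abstract normed target $V'$, only the scalar partition functions $\phi_i^n$ need to be Lipschitz, and the $V'$-valued $g_n$ is then assembled as a finite weighted sum. This bypasses the genuine infinite-dimensional obstruction (balls in $V$ are not totally bounded, so a direct global Lipschitz partition of unity on $V$ subordinate to a cover by $1/n$-balls need not exist) by first collapsing everything into the finite-dimensional compact $K_n$ via the $1$-Lipschitz projection $\pi_n$.
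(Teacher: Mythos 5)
Your proof is correct, and its overall architecture matches the paper's sketch: reduce to finite dimensions by projecting, then approximate the finite-dimensional restriction by sampling $f$ at finitely many points and recombining with Lipschitz weights. The finite-dimensional step, however, is carried out differently, and your variant is tighter on exactly the point the paper glosses over. The paper projects onto all of $\mathbb{R}^n$ and interpolates $f$ multilinearly on an \emph{infinite} grid of cubes of side $1/n$; as literally described, that interpolant need not be globally Lipschitz (the per-cube Lipschitz constant is of order $n\cdot\operatorname{osc}(f,\text{cube})$, and the supremum over all cubes can be infinite for a merely continuous $f$) nor bounded when $f$ is unbounded, so some truncation is implicitly needed to get the stated ``bounded Lipschitz'' conclusion. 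Your construction supplies that truncation explicitly: the $1$-Lipschitz nearest-point projection onto the compact convex set $K_n=V_n\cap\overline{B(0,n)}$ collapses everything into a compact set on which $f$ is uniformly continuous and bounded, and the partition-of-unity average $g_n$ then has a finite global Lipschitz constant and finite sup-norm for free. The remaining ingredients --- the $\epsilon$-net argument for $\pi_n\to\operatorname{id}$ uniformly on compacts, and the upgrade of continuity to ``uniform continuity on a neighborhood of a compact set'' used in the final triangle inequality --- are both standard and correctly invoked. One could also note that your $f_n$ retains the properties the paper later relies on (approximate preservation of Hölder constants at separated points, and of non-degeneracy), since $f_n(x)$ is a convex combination of values of $f$ at points within $\delta_n$ of $\pi_n(x)$.
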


We give a very brief sketch of the construction, and refer the details to the reference \cite{610367}. We may simply consider $V=\ell^2$. Let $p_n:\ell^2\to\mathbb{R}^n$ be an orthogonal projection. We define the mapping from $\mathbb{R}^n$ to $V'$ and compose with $p_n$. Then we discretize $\mathbb{R}^n$ into cubes along the coordinate axes, such that each cube has side length $\frac{1}{n}$. We define the approximation map to be equal to $f$ on the vertices of these cubes, and define the value of the approximation function inside the cube via linear interpolation with its values on the cube vertices. The approximating map converges to $f$ on compacts as we send $n$ to infinity.

For any given $t>0$, we may apply this Proposition to $\sigma(t,x):H\to \mathcal{L}(H_0,H)$ where $H$ is separable Hilbert space and $\mathcal{L}(H_0,H)$ is a normed Banach space. By checking the details of construction, one can show that the approximating sequences are also measurable in $t$.  We also apply the construction to $b(t,x):H\to H.$

An important observation is in place: if $f$ has some nice continuity properties, then $f_n$ would also have such properties at least for two points not sufficiently close. Say if $f$ is $\beta$-Hölder continuous with Hölder coefficient $C_\beta$, then $f_n$ satisfies $|f_n(x)-f_n(y)|\leq 2C_\beta |x-y|^\beta$ for any $|x-y|>>\frac{1}{n}$. This is easy to check from the explicit construction procedure given above: since $|x-y|$ is much larger than the side length of the cube used for construction, $f_n(x)-f_n(y)$ is essentially subtracting the values of $f$ on the vertices of the cube containing $x$, and on the vertices of the cube containing $y$. When $n$ is large this is essentially $f(x)-f(y)$ thanks to the continuity of $f$. Meanwhile, when we make the choice $f=\sigma(t,x)$ and $\sigma$ satisfies the non-degeneracy condition $\mathbf{H}_4$, then $f_n$ also satisfies the non-degeneracy condition  $\mathbf{H}_4$ when $n$ is large: this is because $f_n$ is a weighted sum of $f$ on $n$ grid points. We also use the fact that if an operator $R$ on $H$ has a small operator norm, then $\operatorname{Id}_H+R$ admits a right inverse on $H$. More precisely, assume that we can find $g$ such that $fg=\operatorname{Id}_H$, then $f_ng=\operatorname{Id}_H+R$ with $|R|_{\mathcal{L}(H,H)}$ arbitrarily small. Then we find some $h$ so that $(\operatorname{Id}_H+R)h=\operatorname{Id}_H$ and $f_n(gh)=\operatorname{Id}_H$, as desired.

Thanks to these constructions, in the following we will assume that $b(t,x)$ and $\sigma(t,x)$ can be approximated by some sequences $b^n, \sigma^n$ that are Lipschitz continuous and also satisfy assumptions $\mathbf{H}_2$, $\mathbf{H}_3$, $\mathbf{H}_4$ with uniform in $n$ numerical constants for any fixed $x,y\in H.$ (That is, the Hölder constant is uniform for the approximating sequence, whereas the Lipschitz constant may explode.)

\subsubsection{Staying in a compact subset}
Given such an approximation lemma, we also need a result that shows, for any $T>0$, with probability tending to one, the solution of the stochastic heat equation lies in a compact subset $K$ throughout $[0,T]$. Such a lemma follows from the compactness of the semigroup $\mathcal{S}(t)$.
 
\begin{proposition}\label{proposition1.2} Fix $T>0$ and consider the stochastic evolution equation \eqref{evolutionequation} satisfying the assumptions of Theorem \ref{theorem1.1}.
For any $\epsilon>0$, we can find a compact subset $K\subset H$ such that 
$\mathbb{P}(\theta_K\leq T)<\epsilon,$ where $\theta_K:=\inf\{t\geq 0:X_t\notin K\}.$ The same is true for the SPDE \eqref{burgersequation} with Burgers type non-linearity.
\end{proposition}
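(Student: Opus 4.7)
The plan is to use the smoothing of the semigroup: the ``non-initial'' part $X_t-\mathcal{S}(t)x$ should live in a bounded subset of $\mathcal{D}((-A)^\gamma)$ for some small $\gamma>0$, which under $\mathbf{H}_1$ embeds compactly into $H$. Together with compactness of the orbit $\{\mathcal{S}(t)x:0\leq t\leq T\}$, this yields the required compact set $K$, and the probability estimate then comes from a moment bound plus Markov's inequality.

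First I would establish the elementary moment bound $\sup_{0\leq t\leq T}\mathbb{E}|X_t|^p<\infty$ for all $p\geq 2$ by standard mild-solution estimates on \eqref{evolutionequation}: apply the linear growth $\mathbf{H}_5$, Burkholder--Davis--Gundy to the stochastic convolution, the Hilbert--Schmidt consequence of $\mathbf{H}_1$ (so that $\sum_n\lambda_n^{-(1-\eta)}<\infty$ controls $\|\mathcal{S}(t-s)\|_{\mathrm{HS}}^2$), and close with Gronwall. In the Burgers case one additionally uses $\|(-A)^\vartheta\mathcal{S}(t-s)\|_{\mathrm{op}}\leq C(t-s)^{-\vartheta}$ together with $|F(s,X_s)|\leq M(1+|X_s|)$ from $\mathbf{H}_6$.

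Second, which is the technical heart, I fix $\gamma\in(0,\eta_0/2)$ small (with $\gamma+\vartheta<1$ in the Burgers case) and prove
\[
\mathbb{E}\Bigl[\sup_{0\leq t\leq T}\bigl|(-A)^\gamma(X_t-\mathcal{S}(t)x)\bigr|^p\Bigr]\leq M<\infty
\]
for some $p$ large enough. The deterministic drift convolution is pointwise bounded by $C\int_0^t(t-s)^{-\gamma}(1+|X_s|)\,ds$ using $\|(-A)^\gamma\mathcal{S}(t-s)\|_{\mathrm{op}}\leq C(t-s)^{-\gamma}$ and $\mathbf{H}_5$; the Burgers term is handled analogously with kernel $(t-s)^{-(\gamma+\vartheta)}$, still integrable. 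For the stochastic convolution I would invoke the standard factorization representation
\[
\int_0^t\mathcal{S}(t-s)\sigma(s,X_s)\,dW_s=\frac{\sin\pi\alpha}{\pi}\int_0^t(t-s)^{\alpha-1}\mathcal{S}(t-s)Y_s\,ds,
\]
with $\gamma<\alpha<\eta_0/2$, and bound the $\mathrm{HS}$-moments of $(-A)^\gamma\mathcal{S}(\cdot)\sigma(\cdot,X_\cdot)$ via $\mathbf{H}_1$ and Step~1. This converts the sup-over-$t$ into an $L^p$ estimate on $Y_s$ amenable to Gronwall.

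Third, under $\mathbf{H}_1$ the operator $(-A)^{-\gamma}$ is compact (since $\lambda_n^{-\gamma}\to 0$), so $\overline{B}_R:=\{y\in H:|(-A)^\gamma y|\leq R\}$ is compact in $H$. The orbit $\Gamma:=\{\mathcal{S}(t)x:0\leq t\leq T\}$ is compact by strong continuity of $\mathcal{S}$. Setting $K:=\Gamma+\overline{B}_R=\{z+y:z\in\Gamma,\,y\in\overline{B}_R\}$, the image of the compact product $\Gamma\times\overline{B}_R$ under addition, gives a compact $K\subset H$, and by construction $X_t\in K$ for every $t\in[0,T]$ whenever $\sup_{0\leq t\leq T}|(-A)^\gamma(X_t-\mathcal{S}(t)x)|\leq R$. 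Markov's inequality applied to Step~2 then yields $\mathbb{P}(\theta_K\leq T)\leq M/R^p$, and $R$ is chosen large enough to make this less than $\epsilon$. The main obstacle is Step~2: pulling the supremum inside the expectation for the stochastic convolution when $\sigma$ is only of linear growth forces the factorization trick and a careful choice of $\alpha$ strictly inside the window allowed by $\mathbf{H}_1$; the Burgers extension adds the constraint $\gamma+\vartheta<1$, which is non-vacuous since $\vartheta<1$.
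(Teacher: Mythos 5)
Your argument is correct, and it takes a genuinely different route from the paper's. The paper (Appendix \ref{appendixA}) works in path space: it invokes the compactness of the factorization operators $G_1$ and $G_\eta$ as maps from $L^p((0,T);H)$ into $C([0,T];H)$ (Proposition 8.4 of \cite{da2014stochastic}), writes $X=\mathcal{S}(\cdot)x+G_1 b+\frac{\sin\pi\eta}{\pi}G_\eta\sigma$, bounds the $L^p$-norms of $b(\cdot,X_\cdot)$ and of the factorized stochastic integrand using the moment estimate \eqref{momentestimate}, and then projects the resulting relatively compact subset $\Xi(R)\subset C([0,T];H)$ onto its time slices to obtain $K$. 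You instead trade path-space compactness for spatial regularity: a sup-in-time moment bound on $(-A)^\gamma(X_t-\mathcal{S}(t)x)$, combined with compactness of $(-A)^{-\gamma}$ (which does follow from $\mathbf{H}_1$, since $\lambda_n\to\infty$ is forced by the summability condition) and compactness of the orbit $\{\mathcal{S}(t)x\}$, then Markov's inequality. Both proofs rest on the same two pillars — the factorization lemma for the stochastic convolution and the $p$-th moment bound — and your window $\gamma+1/p<\alpha<\eta_0/2$ for the factorization exponent is indeed nonempty for small $\gamma$ and large $p$, exactly as in Proposition \ref{relabel}. Two small remarks: in your Step 2 the Hilbert--Schmidt bound on $\mathcal{S}(u)\sigma(s,X_s)$ (or $(-A)^\gamma\mathcal{S}(u)\sigma$) is not automatic from $\mathbf{H}_1$ alone; as in the proof of Proposition \ref{relabel} one needs the self-adjointness of $\sigma$ from $\mathbf{H}_3$ to convert $\sum_j|\mathcal{S}(u)\sigma e_j|^2$ into $\sum_k e^{-2\lambda_k u}|\sigma e_k|^2\leq M\,u^{-r}|\sigma|^2_{\mathcal{L}(H_0,H)}$ — worth stating explicitly. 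Also, closedness of $\overline{B}_R=\{y:|(-A)^\gamma y|\leq R\}$ in $H$ deserves a line (image of the weakly closed ball under the compact operator $(-A)^{-\gamma}$). What the paper's route buys that yours does not is relative compactness of the laws in $C([0,T];H)$, which is reused verbatim for the tightness argument in the weak-existence proof of Appendix \ref{appendixB}; what your route buys is a shorter, more self-contained proof of exactly the statement of Proposition \ref{proposition1.2}.
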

The proof of this Proposition is deferred to Appendix \ref{appendixA} and  \ref{appendixB}.

\subsubsection{The maximal inequality estimate}\label{2.1.3}
We begin with a deviation estimate for stochastic integrals in the white noise case. This result is fundamental  for all the other proofs of this paper.
In the following we will consider an adapted process $\Phi\in L^\infty([0,T];\mathcal{L}(H_0,H))$
and its stochastic integral. For this purpose we introduce the following notation: for any $T>0$,
\begin{equation}
\|\Phi\|_T:=\sup_{0\leq t\leq T}|\Phi(t)|_{\mathcal{L}(H_0,H)}.
\end{equation}
Throughout the paper we will need a maximal inequality estimate for the process
$$\Gamma(t):=\int_0^t S(t-s)e^{-\lambda (t-s)}\Phi(s)dW_s,$$
and we aim to show that $|\Gamma(t)|$ is uniformly small when $\lambda$ is very large. If we were in a finite dimension setting we could use Itô's formula for any such derivations. Unfortunately, Itô's formula is not available now. We use instead the stochastic factorization lemma from \cite{da2014stochastic}, which says: for any $\alpha>0$:
\begin{equation}\label{2.2maim2.2}
    \Gamma(t)=\frac{\sin(\alpha\pi)}{\pi}\int_0^t (t-s)^{\alpha-1}\mathcal{S}(t-s)e^{-\lambda(t-s)}Z(s)ds,
\end{equation}

where
\begin{equation}
    Z(t)=\int_0^t (t-s)^{-\alpha}\mathcal{S}(t-s)e^{-\lambda(t-s)}\Phi(s)dW_s.
\end{equation}

The first step is to estimate $Z(t)$.
\begin{proposition}\label{relabel}
Consider the stochastic integral of an adapted process $\Phi$ 
$$Z(t):=\int_0^t(t-s)^{-\alpha} S(t-s)e^{-\lambda (t-s)}\Phi(s)dW_s,$$
where the semigroup $S(t)$ is generated by an operator $A$ satisfying $\mathbf{H}_1$ and $\lambda>0$ is some fixed constant. Then for each  $\eta\in(0,\eta_0),$ we can choose some $\alpha>0$ small enough (only depending on $\eta$) such that we have the following estimate: for any $p\geq 2$,
\begin{equation}
    \mathbb{E}[|Z(t)|^p]\leq C_{\eta,p,\alpha,T}\|\Phi\|_t^p \lambda^{-\frac{\eta p}{2}},\quad \text{ for any } t\in[0,T].
\end{equation}
The constant $C_{\eta,p,\alpha,T}<\infty$ depends on $\eta,p$, $\alpha$ and $T$.

\end{proposition}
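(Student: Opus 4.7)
The plan is to apply the Burkholder--Davis--Gundy inequality for stochastic convolutions against a cylindrical Wiener process (as in \cite{da2014stochastic}), reducing the problem to a deterministic Hilbert--Schmidt integral. Specifically, BDG yields
\[
\mathbb{E}|Z(t)|^p \leq C_p\, \mathbb{E}\Bigl[\Bigl(\int_0^t (t-s)^{-2\alpha} e^{-2\lambda(t-s)} \|\mathcal{S}(t-s)\Phi(s)\|_{\mathrm{HS}(H,H)}^2\, ds\Bigr)^{p/2}\Bigr],
\]
so the whole estimate reduces to controlling the Hilbert--Schmidt norm inside and then extracting a factor $\lambda^{-\eta p/2}$ from the time integration.

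The first key step is a careful computation of the Hilbert--Schmidt norm, which crucially exploits self-adjointness. Expanding $\mathcal{S}(r)\Phi(s)e_n$ in the eigenbasis of $A$ gives
\[
\|\mathcal{S}(r)\Phi(s)\|_{\mathrm{HS}}^2 = \sum_{n,k} e^{-2\lambda_k r}\langle \Phi(s) e_n, e_k\rangle^2.
\]
Using the self-adjointness of $\Phi(s)$ from hypothesis $\mathbf{H_3}$ to swap $e_n \leftrightarrow e_k$ inside the inner product, and then Parseval on the sum over $n$, collapses the double sum to $\sum_k e^{-2\lambda_k r} |\Phi(s) e_k|_H^2$. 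The uniform bound $|e_k|_{H_0} \leq M$ from \eqref{new1.71.7} then yields $|\Phi(s) e_k|_H \leq M \|\Phi\|_t$, producing
\[
\|\mathcal{S}(r)\Phi(s)\|_{\mathrm{HS}}^2 \leq M^2 \|\Phi\|_t^2 \sum_k e^{-2\lambda_k r}.
\]

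Substituting this bound, pulling $\|\Phi\|_t^p$ out of the expectation, and interchanging sum and integral reduces the task to bounding $\sum_k \int_0^t r^{-2\alpha} e^{-2(\lambda+\lambda_k)r}\,dr$. The change of variables $u = 2(\lambda+\lambda_k)r$ (legitimate as I will choose $\alpha < 1/2$) converts each summand into an incomplete Gamma integral, yielding a constant multiple of $\sum_k (\lambda+\lambda_k)^{-(1-2\alpha)}$. I then apply the elementary interpolation $\lambda+\lambda_k \geq \lambda^{\theta}\lambda_k^{1-\theta}$ for $\theta \in (0,1)$ to split this as $\lambda^{-\theta(1-2\alpha)}\sum_k \lambda_k^{-(1-\theta)(1-2\alpha)}$. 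I will tune $\theta(1-2\alpha)=\eta$ to extract the correct decay rate in $\lambda$, and impose $(1-\theta)(1-2\alpha)>1-\eta_0$ so that the residual eigenvalue sum converges by $\mathbf{H_1}$; both conditions hold simultaneously whenever $\alpha < \tfrac{1}{2}(\eta_0-\eta)$, which is a positive range since $\eta<\eta_0$. Raising the resulting bound to the power $p/2$ delivers the advertised $\lambda^{-\eta p/2}$.

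The main obstacle is the Hilbert--Schmidt computation of the second paragraph: the naive double-sum bound is divergent, and the argument hinges on using self-adjointness in $\mathbf{H_3}$ to transfer the semigroup decay onto the correct summation variable. Once this algebraic manoeuvre is in place, the remainder is essentially weighted AM--GM combined with the summability condition in $\mathbf{H_1}$, and care with the parameter ranges for $\alpha$ and $\theta$.
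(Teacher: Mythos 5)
Your proposal is correct and follows essentially the same route as the paper: BDG, expansion in the eigenbasis of $A$, the self-adjointness swap plus Parseval to collapse the double sum, the uniform bound $|e_k|_{H_0}\leq M$, and a Young-type interpolation $\lambda+\lambda_k\gtrsim\lambda^{\theta}\lambda_k^{1-\theta}$ to extract $\lambda^{-\eta}$ while keeping the eigenvalue sum convergent under $\mathbf{H}_1$. The only (cosmetic) difference is that you integrate the exponential exactly into an incomplete Gamma factor before interpolating, whereas the paper first bounds $e^{-2(\lambda_k+\lambda)s}\leq c_r((\lambda_k+\lambda)s)^{-r}$ pointwise and then checks $\int_0^t s^{-2\alpha-r}\,ds<\infty$; your parameter bookkeeping ($\alpha<\tfrac12(\eta_0-\eta)$) is a clean and valid substitute for the paper's choice of $r$ close to $1$.
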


\begin{proof} 
We write $\mathcal{S}^\lambda$ the semigroup generated by $-A-\lambda I$, so that $$Z(t)=\int_0^t \mathcal{S}^\lambda(t-s)\Phi(s)dW_s.$$ By the BDG inequality,
\begin{equation}
    \mathbb{E}[|Z(t)|_H^p]\leq C\mathbb{E}\left( \sum_{j=1}^\infty\int_0^t (t-s)^{-2\alpha}|\mathcal{S}^\lambda(t-s)\Phi(s)e_j|_H^2ds\right)^{p/2}.
\end{equation}
 We estimate the right hand side via:
$$\begin{aligned}
\Lambda_\alpha^{\lambda}(t):&=\sum_{j=1}^\infty \int_0^t (t-s)^{-2\alpha}|\mathcal{S}^\lambda(t-s) \Phi(s)e_j|_H^2ds\\
&=\sum_{k=1}^\infty \sum_{j=1}^\infty \int_0^t (t-s)^{-2\alpha}\langle \mathcal{S}^\lambda(t-s)\Phi(s)e_j,e_k\rangle_H^2ds\\
&=\sum_{k=1}^\infty\sum_{j=1}^\infty\int_0^t (t-s)^{-2\alpha}\langle \Phi(s)e_j,S^{\lambda*}(t-s)e_k\rangle_H^2ds
\end{aligned}
$$
 where $\mathcal{S}^{\lambda_*}(t)$ is the adjoint operator of the operator $\mathcal{S}^\lambda(t)$ in the Hilbert space, in the sense that $\langle S^\lambda(t)h_1,h_2\rangle_H=\langle S^{\lambda_*}(t)h_2,h_1\rangle_H$ for any $h_1,h_2\in H$.

Since $$\begin{aligned}
\langle\mathcal{S}^{\lambda*}(t)^*e_k,e_j\rangle_H&=\langle \mathcal{S}^\lambda(t) e_j,e_k\rangle_H\\&=
\begin{cases}e^{-(\lambda_j+\lambda)t}\quad j=k\\0\quad j\neq k,\end{cases}
\end{aligned}$$ we further simplify $\Lambda_\alpha^{\lambda}(t)$ as

$$
\Lambda_\alpha^{\lambda}(t)=\sum_{k=1}^\infty\sum_{j= 1}^\infty\int_0^t (t-s)^{-2\alpha}e^{-2(\lambda_k+\lambda)(t-s)}\langle \Phi(s)e_j,e_k\rangle_H^2ds.
$$
By self-adjointness of $\Phi$, noting $e_k\in H_0$ with $|e_k|_{H_0}\leq M$,
$$\sum_{j=1}^\infty \langle \Phi(s)e_j,e_k\rangle_H^2=\sum_{j=1}^\infty \langle \Phi(s)e_k,e_j\rangle_H^2=|\Phi(s)e_k|_H^2\leq M|\Phi(s)|^2_{\mathcal{L}(H_0,H)}.$$

    Thus we upper $\Lambda_\alpha^{\lambda}(t)$ through
    \begin{equation}\label{Lambdamulambda}
\Lambda_\alpha^{\lambda}(t)\leq {M}\int_0^t (t-s)^{-2\alpha}\|\Phi(s)\|_t^2\left(\sum_{k=1}^\infty e^{-2(\lambda_k+\lambda)(t-s)}
\right)ds.
    \end{equation}

In the following we find effective upper bounds for $$\sum_{k=1}^\infty e^{-2(\lambda_k+\lambda)(t-s)}$$
For any $\eta\in(0,1)$, by Young's inequality we may find $c_\eta>0$ such that \begin{equation}\label{young's}
\lambda_k+\lambda\geq c_\eta {\lambda_k}^{1-\eta}\lambda^\eta.\end{equation} 
For any $r<1$ we can find positive constants $c_r$, $c_{r,\eta}$ such that, by using \eqref{young's} in the second inequality,
\begin{equation}\label{J1J1}e^{-2(\lambda_k+\lambda)t}\leq c_r\frac{1}{((\lambda_k+\lambda)t)^r}\leq c_{r,\eta}\frac{1}{\lambda_k^{(1-\eta)r}}\lambda^{-\eta r}\frac{1}{t^r}. \end{equation}
Since \begin{equation}\label{579}\int_0^t s^{-2\alpha-r} ds<\infty\end{equation} by choosing $\alpha>0$ sufficiently small followed by choosing $r<1$ sufficiently close to 1, one can ensure that $\sum_{k=1}^\infty\frac{1}{\lambda_k^{(1-\eta)r}}<\infty$ for $r$ sufficiently close to 1, and then $r\eta$, the exponent of $\lambda^{-1},$ can achieve any value in $(0,\eta_0)$ by our choice. Thus we write $\eta$ in place of $r\eta$ for the given $\eta\in(0,1)$, and derive the estimate \begin{equation}\label{eqref580}
   \sum_{k=1}^\infty e^{-2(\lambda_k+\lambda)t}\leq c_{\eta} \lambda^{-\eta}\frac{1}{t^r}
\end{equation} where $c_\eta$ implicitly depends on $r$ but is independent of $t$. Note that in the above procedure, $r$ depends only on $\eta$, and thus the admissible choice of $\alpha$ only depends on $\eta$ as well.

This produces the desired $\lambda^{\eta}$ factor in the upper bound of $\Lambda_\alpha^{\lambda}$, and the integral with respect to $t$ is finite thanks to \eqref{579}.
 This finishes the proof with a constant $c_{\eta,p,\alpha,T}$.

\end{proof}

Now we turn back to the original process $\Gamma(t)$.

\begin{theorem}\label{stochasticinte}
    Recall the process $\Gamma(t)$ defined as
     $$\Gamma(t)=\int_0^t \mathcal{S}(t-s)e^{-\lambda(t-s)}\Phi(s)dW_s.$$
     Then for any $\eta\in(0,\eta_0)$ we can find a $p_*>0$ depending only on $\eta$ such that, for any $p\geq p_*$, we can find $c_{\eta,p,T}>0$ such that for any $\lambda>0$, 
     \begin{equation}
         \mathbb{E}\sup_{t\in[0,T]}|\Gamma(t)|^p\leq c_{\eta,p,T}\lambda^{-\frac{\eta p}{2}}\mathbb{E}\|\Phi\|_T^p.
     \end{equation} The constant $c_{\eta,p,T}$ depends on $\eta$, $p$ and $T$.

     More generally, for any stopping time $\tau$ we can estimate
      \begin{equation}
         \mathbb{E}\sup_{t\in[0,T\wedge\tau]}|\Gamma(t)|^p\leq c_{\eta,p,T}\lambda^{-\frac{\eta p}{2}}\mathbb{E}\|\Phi\|_{T\wedge\tau}^p.
     \end{equation}

     \begin{proof} 
         By the stochastic factorization formula \eqref{2.2maim2.2},
   we have, for some $\alpha>0$ (determined in Proposition \ref{relabel} by $\eta$), the first equality of the following:
   \begin{equation}\begin{aligned}
\mathbb{E}&\sup_{t\in[0,T]}|\Gamma(t)|^p=\mathbb{E}\sup_{t\in[0,T]}\left|\int_0^t (t-s)^{\alpha-1}\mathcal{S}^\lambda(t-s)Z(s)ds\right|^p
\\&
\leq C_{\eta,p,T}\left(\int_0^T s^{(\alpha-1)p/(p-1)}\|\mathcal{S}^\lambda(s)\|^{p/(p-1)}ds\right)^{p-1}\mathbb{E}\int_0^T |Z(s)|^p ds
\\&\leq C_{\eta,p,T}\lambda^{-\frac{\eta p}{2}}\mathbb{E}\|\Phi\|_T^p,
\end{aligned}   \end{equation}
     where in the second inequality we choose $p>\frac{1}{\alpha}$ sufficiently large, use the boundedness of $\|\mathcal{S}^\lambda(s)\|$ and apply Hölder's inequality. The last inequality follows from Proposition \ref{relabel}. This completes the proof.

     To prove the stopping time version, we may replace $\Phi(s)$ by $\Phi(s)1_{s\leq\tau}$ in the definition of $\Gamma(t)$ without changing the value of $\Gamma(t)$ for $t\leq T\wedge\tau$, and do the same computation.

     \end{proof}

\end{theorem}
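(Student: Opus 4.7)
The plan is to combine the stochastic factorization identity \eqref{2.2maim2.2} with the moment bound from Proposition \ref{relabel}. Fix $\eta\in(0,\eta_0)$ and let $\alpha=\alpha(\eta)>0$ be small enough that Proposition \ref{relabel} applies. Then for each $t\leq T$ I have the deterministic representation
$$\Gamma(t)=\frac{\sin(\alpha\pi)}{\pi}\int_0^t (t-s)^{\alpha-1}\mathcal{S}^\lambda(t-s)Z(s)\,ds,$$
so the problem reduces to controlling a deterministic convolution whose kernel is singular at $s=t$ but integrable, driven by the random input $Z(s)$ whose moments are already governed by Proposition \ref{relabel}.

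Next I would apply Hölder's inequality in $s$ with conjugate exponents $p/(p-1)$ and $p$ to obtain a pointwise bound
$$|\Gamma(t)|\leq C\left(\int_0^t u^{(\alpha-1)p/(p-1)}\|\mathcal{S}^\lambda(u)\|^{p/(p-1)}\,du\right)^{(p-1)/p}\left(\int_0^t |Z(s)|^p\,ds\right)^{1/p}.$$
Under $\mathbf{H}_1$ we have $\|\mathcal{S}^\lambda(u)\|\leq e^{-(c+\lambda)u}\leq 1$, so the first factor is bounded by a finite constant depending only on $\alpha$, $p$, $T$ provided the exponent satisfies $(\alpha-1)p/(p-1)>-1$; this forces $p>1/\alpha$, and I would set $p_*:=\lceil 1/\alpha(\eta)\rceil+1$, which depends only on $\eta$. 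Since this Hölder bound is monotone in $t$, I can take the supremum over $t\in[0,T]$ to conclude
$$\sup_{t\in[0,T]}|\Gamma(t)|^p\leq C_{\eta,p,T}\int_0^T |Z(s)|^p\,ds.$$
Taking expectations, swapping integral and expectation by Fubini, and applying Proposition \ref{relabel} pointwise in $s$ gives
$$\mathbb{E}\sup_{t\in[0,T]}|\Gamma(t)|^p\leq C_{\eta,p,T}\int_0^T \mathbb{E}|Z(s)|^p\,ds\leq C_{\eta,p,T}\lambda^{-\eta p/2}\,\mathbb{E}\|\Phi\|_T^p,$$
which is the claimed estimate.

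For the stopping time version, I would replace $\Phi(s)$ by $\Phi(s)\mathbf{1}_{s\leq\tau}$ in the definition of $\Gamma(t)$; this remains an adapted integrand since $\tau$ is a stopping time, and for $t\leq T\wedge\tau$ the value of the truncated stochastic integral agrees with $\Gamma(t)$. Since $\|\Phi\mathbf{1}_{\cdot\leq\tau}\|_T=\|\Phi\|_{T\wedge\tau}$, applying the unstopped estimate to the truncated integrand yields the stopping-time bound without extra work.

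The only genuinely delicate point is the interlocking choice of parameters: $\alpha$ must be chosen small enough (depending on $\eta$) for Proposition \ref{relabel} to deliver the $\lambda^{-\eta p/2}$ decay, after which $p$ must be chosen large enough (depending on $\alpha$, hence on $\eta$) for the convolution kernel to be Hölder-integrable against $Z$. This is what forces $p_*$ to depend on $\eta$. Everything else is routine once the stochastic factorization lemma has done the essential work of converting a stochastic supremum into an integral of a deterministic $L^p$ norm.
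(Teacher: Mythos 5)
Your proposal is correct and follows essentially the same route as the paper's own proof: stochastic factorization to reduce $\Gamma$ to a deterministic convolution against $Z$, Hölder's inequality with exponents $p/(p-1)$ and $p$ (forcing $p>1/\alpha$), boundedness of $\mathcal{S}^\lambda$, and then Proposition \ref{relabel} to extract the $\lambda^{-\eta p/2}$ factor, with the stopping-time case handled by truncating the integrand. The only difference is that you spell out a few routine details (the integrability condition on the kernel exponent and the identity $\|\Phi\mathbf{1}_{\cdot\leq\tau}\|_T=\|\Phi\|_{T\wedge\tau}$) that the paper leaves implicit.
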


\subsubsection{The Girsanov transform part}
Finally we provide an estimate 
that follows from Girsanov transform, which is an analogue of Proposition 3.2 of \cite{kulik2020well} in the SPDE setting. 

\begin{proposition}\label{proposition2.3}
Consider $X_t$ that solves the SPDE \eqref{evolutionequation} under the assumption of Theorem \ref{theorem1.1}, and consider another SPDE 
\begin{equation}\label{girsanov!}dY_t=AY_t dt+b(t,Y_t)dt+\lambda \phi_t  dt+\sigma(t,Y_t)dW_t,\quad Y_0=x\in H,\end{equation}
where $(\phi_t)_{t\geq 0}$ is an $H$-valued adapted process. We assume that $b$ and $\sigma$ are Lipschitz continuous, so that \eqref{girsanov!} has a unique strong solution. Assume that $\sup_{0\leq t\leq T}|\phi_t|\leq |\phi|_\infty<\infty$ almost surely for some finite constant $|\phi|_\infty.$ Then for any $T>0$ we may find a constant $C$ such that 
$$d_{TV}(\operatorname{Law}(X|_{[0,T]}),\operatorname{Law}(Y_{[0,T]}))\leq CT^{1/2}\lambda |\phi|_\infty,$$
where $d_{TV}$ is the total variation distance on the Borel probability space on $\mathcal{C}([0,T];H)$.
\end{proposition}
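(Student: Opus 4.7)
The plan is to apply Girsanov's theorem on the probability space that carries $Y$ in order to absorb the extra drift $\lambda\phi_t$ into the noise, and then to bound the total variation distance by the Kullback--Leibler divergence via Pinsker's inequality. Since $\sigma(t,Y_t)$ has a bounded right inverse $\sigma^{-1}(t,Y_t)$ on $H$ by assumption $\mathbf{H}_4$, the extra drift has the form $\sigma(t,Y_t)\cdot\sigma^{-1}(t,Y_t)\lambda\phi_t$, which is precisely the structure required to perform a Girsanov shift of the cylindrical Wiener process.

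Concretely, I would set $\psi_s:=\lambda\,\sigma^{-1}(s,Y_s)\phi_s$, which is an adapted $H$-valued process satisfying $\sup_{0\leq s\leq T}|\psi_s|_H\leq C\lambda|\phi|_\infty$ almost surely thanks to the uniform bound on $|\sigma^{-1}|_{\mathcal{L}(H,H)}$ in $\mathbf{H}_4$. This deterministic uniform bound automatically gives Novikov's condition, so the exponential
\[
\mathcal{E}_T:=\exp\!\left(-\int_0^T \langle\psi_s,dW_s\rangle-\tfrac12\int_0^T|\psi_s|^2\,ds\right)
\]
is a genuine martingale on $[0,T]$. Setting $d\widetilde{\mathbb{P}}=\mathcal{E}_T\,d\mathbb{P}$, the infinite-dimensional Girsanov theorem in the cylindrical Wiener setting (see \cite{da2014stochastic}) makes $\widetilde{W}_t:=W_t+\int_0^t\psi_s\,ds$ a cylindrical Wiener process under $\widetilde{\mathbb{P}}$, and rewriting \eqref{girsanov!} as $dY_t=AY_t\,dt+b(t,Y_t)\,dt+\sigma(t,Y_t)\,d\widetilde{W}_t$ identifies $Y$ under $\widetilde{\mathbb{P}}$ with a solution of the SPDE \eqref{evolutionequation}. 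By the Lipschitz well-posedness assumed for $b$ and $\sigma$, this forces $\operatorname{Law}_{\widetilde{\mathbb{P}}}(Y|_{[0,T]})=\operatorname{Law}_{\mathbb{P}}(X|_{[0,T]})$.

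It then remains to combine the data-processing inequality with Pinsker's inequality:
\[
d_{TV}(\operatorname{Law}(X|_{[0,T]}),\operatorname{Law}(Y|_{[0,T]}))\leq d_{TV}(\widetilde{\mathbb{P}},\mathbb{P})\leq\sqrt{\tfrac12\,\operatorname{KL}(\mathbb{P}\|\widetilde{\mathbb{P}})}.
\]
The Girsanov density immediately gives $\operatorname{KL}(\mathbb{P}\|\widetilde{\mathbb{P}})=\tfrac12\,\mathbb{E}_{\mathbb{P}}\!\int_0^T|\psi_s|^2\,ds$, since the stochastic integral term has zero mean once $L^2$-integrability of $\psi$ is noted, and the uniform bound $|\psi_s|^2\leq C^2\lambda^2|\phi|_\infty^2$ controls this by $\tfrac12 C^2 T\lambda^2|\phi|_\infty^2$. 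Substituting yields the claimed estimate with an absolute multiplicative constant. The only delicate point is the legitimacy of the infinite-dimensional Girsanov transform, but the boundedness of $\sigma^{-1}$ in $\mathcal{L}(H,H)$ ensures $\psi_s$ is a bona fide $H$-valued adapted shift, placing us inside the classical framework of \cite{da2014stochastic}; no further regularity of $\sigma$, such as the self-adjointness from $\mathbf{H}_3$ or the Hölder assumption from $\mathbf{H}_3$, needs to be invoked beyond the Lipschitz hypothesis used for strong existence of $Y$.
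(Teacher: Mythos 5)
Your proposal is correct and follows essentially the same route as the paper: a Girsanov shift by $\psi_s=\lambda\,\sigma^{-1}(s,Y_s)\phi_s$ (legitimate by the uniform bound on $\sigma^{-1}$ from $\mathbf{H}_4$), identification of the shifted law of $Y$ with the law of $X$ via Lipschitz well-posedness, and Pinsker's inequality applied to the explicitly computed relative entropy $\tfrac12\,\mathbb{E}\int_0^T|\psi_s|^2\,ds\leq\tfrac12 C^2T\lambda^2|\phi|_\infty^2$. The paper's proof is a condensed version of exactly this argument, so no further comparison is needed.
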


\begin{proof}
By Pinsker's inequality, we have
$$d_{TV}(\operatorname{Law}(X|_{[0,T]}),\operatorname{Law}(Y_{[0,T]}))\leq\sqrt{2 H_{\text{rel}}(\operatorname{Law}(X|_{[0,T]})\mid \operatorname{Law}(Y_{[0,T]}))},$$
where $H_{\text{rel}}(\cdot\mid\cdot)$ denotes the relative entropy on path space $\mathcal{C}([0,T];H).$

Since $\sigma$ is non-degenerate, the law of $X|_{[0,T]}$ can be obtained from $Y|_{[0,T]}$ via Girsanov transform (a version of Girsanov transform for cylindrical processes can be bound in \cite{da2014stochastic}, Chapter 10):
$$(W_t)_{0\leq t\leq T}\mapsto (W_t-\sigma(t,Y_t)^{-1}\lambda \phi_t )_{0\leq t\leq T}.$$

Since $\sigma^{-1}\lambda\phi_t$ is bounded almost surely by $C\lambda|\phi|_\infty$, where $C$ is the upper bound in Hypothesis $\mathbf{H}_4$, the claim follows from a direct computation of relative entropy. Namely, let $W_t$ be the white noise, let $\mu_W$ be the distribution of the stochastic heat equation $dX_t=AX_tdt+dW_t,X_0=0$, and let $\xi$ be an SPDE solving $\xi_0=0$ and 
$$
d\xi_t=A\xi_tdt+\beta_t dt+dW_t,
$$ whose distribution is denoted by $\mu_\xi$. Then via computing the Girsanov change of density we get the following formula (see \cite{butkovsky2020generalized}, Appendix A or \cite{da2014stochastic} for Girsanov transform in infinite dimensions)
$$
H_{rel}(\mu_\xi,\mu_W)\leq\frac{1}{2}\mathbb{E}\int_0^\infty|\beta_t^2|dt.
$$ Applying this inequality to our change of measure, we get that $$2H_{\text{rel}}(\operatorname{Law}(X|_{[0,T]})\mid \operatorname{Law}(Y_{[0,T]}))\leq C^2T\lambda^2|\phi|_\infty^2,$$ where $C$ is any constant satisfying $|\sigma^{-1}(t,x)|\leq C\quad\forall t,x,$. Remarkably, this relative entropy bound does not depend on the Lipschitz continuity of the various coefficients but only on the supremum norm. This is the key reason enabling our argument to work.
\end{proof}

\subsection{Proof of Theorem \ref{theorem1.1}}

\begin{proof} Existence of a weak mild solution is well-known, see for example \cite{gkatarek1994weak}, Theorem 2, and a generalized version of the existence result will be proved in Appendix \ref{appendixB}. We now prove uniqueness of weak mild solution.

As discussed in Section \ref{subsection2.112}, we find a sequence of Lipschitz mappings $b^n$, $\sigma^n$ approximating $b$ and $\sigma$ on compacts, and by the discussion at the end of that subsection, we may assume $b^n$, $\sigma^n$ also satisfy assumptions $\mathbf{H}_2,\mathbf{H}_3,\mathbf{H}_4$ with the same numerical constant.

For any initial value $x\in H$, consider the following three SPDEs
$$dX_t=A X_t dt+b(t,X_t)dt+\sigma(t,X_t)dW_t,\quad X_0=x,$$ which is the SPDE we wish to prove uniqueness in law.  As we have not shown the solution is unique in law, we temporarily select one candidate solution and fix it in the sequel. Therefore, we assume the selected weak solution to this SPDE lives on some probability space $(\Omega,\mathcal{F},(\mathcal{F}_t),W).$ On the same probability space where $X_t$ lives, we solve the following SPDE
$$dX^n_t=A X^n_t dt+b^n(t,X^n_t)dt+\sigma^n(t,X^n_t)dW_t,\quad X^n_0=x,$$
which has a strong solution thanks to the Lipschitz continuity of $b^n$ and $\sigma^n$. The aim of proof is to show that the law of $X_t^n$ converges to the law of $X_t$, so that, given the law of $X_t^n$ is uniquely defined, any candidate solution to $X_t$ should have the same law, i.e. weak uniqueness holds.
Also, on this probability space we solve the following SPDE
$$d\widetilde{X}^n_t=A\widetilde{X}^n_t dt+b^n(t,\widetilde{X}^n_t)dt+\lambda (X_t-\widetilde{X}_t^n)dt+\sigma^n(t,\widetilde{X}^n_t)dW_t,\quad \widetilde{X}_0^n=x.$$
This SPDE has a strong solution on this probability space thanks to Lipschitz continuity of $b^n$ and $\sigma^n$.

Before going into details we briefly outline the two-step proof strategy. To show $X_t^n$ converges to $X_t$ in distribution, we first establish a contracting estimate of $X_t$ towards $\widetilde{X}_t^n$ via choosing $\lambda$ sufficiently large. Then we show that $X_t^n$ is sufficiently close to $\widetilde{X}_t^n$ in total variation distance, using an appropriate definition of the stopping time $\tau$ and non-degeneracy of $\sigma^n$. The proof completes as a combination of these two bounds.

Given a compact subset $K\subset H$ with $x\in K$ obtained via Proposition \ref{proposition1.2}, denote by $$\Delta_K^n:=\sup_{t>0}\sup_{y\in K}\{|b^n(t,y)-b(t,y)|+|\sigma^n(t,y)-\sigma(t,y)|_{\mathcal{L}(H_0,H)}\}$$
and set $\tau_K^n:=\inf\{t\geq 0:|X_t-\widetilde{X}_t^n|\geq 2\Delta_K^n\}.$ Also denote by $\theta_K:=\inf\{t\geq 0: X_t\notin K\}.$ We set $$\tau=\tau_K^n\wedge\theta_K$$ and $$\lambda=({\Delta_K^n})^{\gamma -1}$$ 
for some value of $\gamma>0$ to be fixed later. Thanks to Proposition \ref{proposition1.2}, we have $\mathbb{P}(\theta_K\leq T)\leq\epsilon.$ We stress that the stopping time $\tau$ is uniquely defined once the law of $X_t$ is determined.

\textbf{Step 1: Contraction estimate.} For $0\leq t\leq \tau$, assume for simplicity that $\Delta_K^n\in[0,1)$, we may compute via triangle inequality that for some $n$-independent constant $C>0$,
$$
    |b(t,X_t)-b^n(t,\widetilde{X}_t^n)|\leq M(2\Delta_K^n)^{\alpha}+\Delta_K^n\leq C (\Delta_K^n)^\alpha,\quad t\leq \tau
$$ and 
$$|\sigma(t,X_t)-\sigma^n(t,\widetilde{X}_t^n)|_{\mathcal{L}(H_0,H)}\leq M(2\Delta_K^n)^\beta+\Delta_K^n\leq C (\Delta_K^n)^\beta,\quad t\leq\tau,$$
where $M$ is the constant appearing in Assumptions $\mathbf{H}_2$ and $\mathbf{H}_3$.

A direct computation yields that, for any $0\leq t\leq\tau$,
\begin{equation}\label{linearexpansions}\begin{aligned}
|X_t-\widetilde{X}_t^n|&\leq e^{-\lambda t}\left|\int_0^t S(t-s)e^{\lambda s }[b(s,X_s)-b^n(s,\widetilde{X}_s^n)]ds\right|\\&+\left|e^{-\lambda t}\int_0^t S(t-s)e^{\lambda s }[\sigma(s,X_s)-\sigma^n(s,\widetilde{X}_s^n)]dW_s\right|.\end{aligned}.\end{equation}

 Applying the stopping time version of Theorem \ref{stochasticinte}, note that we choose $\lambda=(\Delta_K^n)^{\gamma -1}$, we obtain that for each $\eta\in(0,\eta_0)$ and sufficiently large $m\in\mathbb{N}_+$, we may find a constant $C=C_{\eta,m,T}$ satisfying
\begin{equation}\label{complementary}\mathbb{P}(\sup_{t\in[0,\tau]}|X_t-\widetilde{X}_t^n|>C (\Delta_K^n)^{1+\alpha-\gamma}+C(\Delta_K^n)^{\frac{1-\gamma}{2}\eta+\beta}R)\leq \frac{C}{R^m},\quad R>0,\end{equation}
where the term $(\Delta_K^n)^{\frac{1-\gamma}{2}\eta+\beta}$ is obtained from Theorem \ref{stochasticinte} and the term $(\Delta_K^n)^{1+\alpha-\gamma}$ follows from a more elementary argument applied to the first term on the right hand side of  \eqref{linearexpansions}: $\int_0^te^{-\lambda(t-s)}(\Delta_K^n)^\alpha ds\leq (\Delta_K^n)^{1-\gamma+\alpha}$.

By our assumption $\alpha\in(0,1]$ and $\beta\in(1-\frac{\eta_0}{2},1]$, we can find $\eta\in(0,\eta_0)$ sufficiently close to $\eta_0$ and $\gamma\in(0,1)$ sufficiently close to $0$, such that the following holds at the same time:
\begin{equation}
    \label{condition2}
\begin{cases}
\gamma\in(0,\alpha)\\
\frac{1-\gamma}{2}\eta+\beta>1.\\
\end{cases}\end{equation}
Consequently, the powers of $\Delta_K^n$ in the bracket of \eqref{complementary} are all larger than one.
We fix from now on the choice of $\gamma$ and $\eta$, as well as the constant $C_m=C_{\eta,m}$.

Now we choose $\chi\in(0,\frac{1-\gamma}{2}\eta+\beta-1]$ and set $R:=(\Delta_K^n)^{-\chi}$ in the estimate \eqref{complementary}. Denote by $$\Omega_\nu:=\{\sup_{t\in[0,\tau]}|X_t-\widetilde{X}_t^n|>C (\Delta_K^n)^{1+\alpha-\gamma}+C(\Delta_K^n)^{\frac{1-\gamma}{2}\eta+\beta-\chi}\}.$$
Upon choosing $\Delta_K^n$ sufficiently small, we deduce that $|X_t-\widetilde{X}_t^n|\leq\Delta_K^n$ for all $t\leq\tau$ on $\Omega\setminus\Omega_\nu$. Since the processes have continuous trajectories, we deduce that we indeed have $$\theta_K\wedge T\leq \tau_K^n$$ on $\Omega\setminus\Omega_\nu$. From this we conclude that, for any $\kappa>0$,
\begin{equation}
   \label{convergeinprob} 
\mathbb{P}(\sup_{t\in[0,T\wedge \theta_K]} |X_t-\widetilde{X}_t^n|>\kappa)\to 0,\quad n\to\infty.\end{equation}

The previous computations also imply that $\mathbb{P}(\Omega_\nu)\to 0$, so that
\begin{equation}\label{whatimplythat}
    \mathbb{P}(\tau_K^n< \theta_K\wedge T)\to 0,\quad n\to\infty.
\end{equation}

\textbf{Step 2: Girsanov transform estimate.} We define one more auxiliary process for convenience: consider the following SPDE
$$d\widetilde{\underline{X}}^n_t=A\widetilde{\underline{X}}^n_t dt+b^n(t,\widetilde{\underline{X}}^n_t)dt+\lambda (X_t-\widetilde{\underline{X}}_t^n)1_{t
\leq\tau}dt+\sigma^n(t,\widetilde{\underline{X}}^n_t)dW_t,\quad \widetilde{\underline{X}}_0^n=x.$$
Since we have determined the process $X_t$ and the stopping time $\tau_K^n$ is uniquely defined, and that the coefficients $b^n,\sigma^n$ are Lipschitz, the SPDE $\widetilde{\underline{X}}^n_t$ has a unique strong solution on this probability space. Moreover we have $\widetilde{\underline{X}}^n_t=\widetilde{X}^n_t$ whenever $t\leq\tau$.

Now we apply Proposition \ref{proposition2.3} for the process $X_t^n$ perturbed by $\lambda(X_t-\widetilde{X}_t^n)1_{t\leq\tau}$ (the SPDE solved by  $\widetilde{X}_t^n$ has Lipschitz coefficients, fulfilling the requirement of Proposition \ref{proposition2.3}) and deduce that, 
for any $T>0$ we can find a constant $C$ (uniform in $n$) such that
\begin{equation}\label{variationineq}
d_{TV}(\operatorname{Law}(X^n|_{[0,T]}),\operatorname{Law}(\widetilde{\underline{X}}^n|_{[0,T]}))\leq C T^{1/2}(\Delta_K^n)^\gamma.\end{equation}

We use the fact that $\lambda|X_t-\widetilde{\underline{X}}_t^n|=\lambda|X_t-\widetilde{X}_t^n|\leq 2\lambda \Delta_K^n$ whenever $t\leq\tau$.

\textbf{Step 3: Combining all the above estimates.}
To prove uniqueness in law of the solution $X_t$, it suffices to show that, for any bounded continuous function $E$ on $\mathcal{C}([0,T];H)$, we have 
\begin{equation}\label{condition}
\mathbb{E}[E(X|_{[0,T]})]-\mathbb{E}[E(X^n|_{[0,T]})]\to 0,\quad n\to\infty,\end{equation}
Indeed, since $b^n$ and $\sigma^n$ are Lipschitz for each $n$, the SPDE solved by $X^n$ has a unique strong solution, and $\operatorname{Law}(X^n|_{[0,T]})$ is uniquely determined. If \eqref{condition} were established, then it would imply that
$$
\mathbb{E}[E(X|_{[0,T]})]$$
is uniquely defined for any continuous test function $E$ and any candidate solution $X$, so the law of $X|_{[0,T]}$ is unique. Since $T$ is arbitrary, this shows the uniqueness in law of $X$.

To prove \eqref{condition}, first note that \eqref{variationineq} implies,  in the limit $\Delta_K^n\to 0,$
$$\mathbb{E}[E(\widetilde{\underline{X}}^n|_{[0,T]})]-\mathbb{E}[E(X^n|_{[0,T]})]\to 0,\quad n\to\infty.$$
Moreover, since $\mathbb{P}(\theta_K\leq T)$ can be taken arbitrarily small thanks to Proposition \ref{proposition1.2} and we have \eqref{whatimplythat}, we deduce that 
$$|\mathbb{E}[E(\widetilde{\underline{X}}^n|_{[0,T]})]-\mathbb{E}[E(\widetilde{X}^n|_{[0,T]})]|\leq 2\sup_x |E(x)|\times\mathbb{P}(\tau< T),$$ which can be made arbitrarily small by setting $K$ large and setting $n\to\infty$, using \eqref{whatimplythat}.

Note also that \eqref{convergeinprob} implies $\widetilde{X}^n|_{[0,T]}$ converges to $X|_{[0,T]}$ in probability, on $\{\theta_K\geq T\}$. Therefore 
$$\lim\sup_{n\to\infty}\left|\mathbb{E}[E(\widetilde{X}^n|_{[0,T]})]-\mathbb{E}[E(X|_{[0,T]})]\right|\leq  2\sup_x|E(x)|\mathbb{P}(\theta_K\leq T),\quad n\to\infty.$$ Combining the aforementioned three estimates, we have
$$\lim\sup_{n\to\infty}\left|\mathbb{E}[E(X^n|_{[0,T]})]-\mathbb{E}[E(X|_{[0,T]})]\right|\leq  4\sup_x|E(x)|\mathbb{P}(\theta_K\leq T).$$
The right hand side can be set arbitrarily small thanks to Proposition \ref{proposition1.2}. This completes the proof of \eqref{condition}, and weak uniqueness follows.

\end{proof}

\subsection{Proof of Theorem \ref{burgerstheorem1}}\label{sectionburgures}

In this section we prove Theorem \ref{burgerstheorem1}, where there is an additional $(-A)^{\vartheta}F(t,X_t)$ term.

We need a simple estimate:

\begin{equation}\label{05.1} \|(-A)^{\vartheta}S(t)\|_{op}=\sup_{k\geq 1}e^{-\lambda_k t} \lambda_k^{\vartheta}\leq \frac{d_\vartheta}{t^{\vartheta}},\end{equation}
where $$d_\vartheta:=\sup_{r> 0}e^{-r}r^{\vartheta}<\infty.$$

Moreover, for any $\lambda>0$ we have
\begin{equation}\label{burgersusefulestimate}
\begin{aligned}
    \int_0^t e^{-\lambda(t-s)}\|(-A)^{\vartheta}S(t-s)\|_{op}ds&\leq d_\vartheta\int_0^t e^{-\lambda s}\frac{1}{s^\vartheta}ds
    \\&\leq d_\vartheta \lambda^{\vartheta-1} \int_0^\infty e^{-t}\frac{dt}{t^\vartheta}\leq C_\vartheta\lambda^{\vartheta-1}
    \end{aligned}
\end{equation}
for some universal constant $C_\vartheta>0$ that only depends on $\vartheta\in(0,1)$.

 We now begin to prove the well-posedness result, Theorem \ref{burgerstheorem1}.

\begin{proof}
For existence of a weak mild solution, we generalize the existence part of Theorem 1 of \cite{priola2022correction} to cover multiplicative noise. Proof of weak existence is given in Appendix \ref{appendixB}.

We now focus on showing the weak solution is unique in law. We follow closely the proof of Theorem \ref{theorem1.1}. Consider approximations $b^n$, $\sigma^n$ and $F^n$ that are Lipschitz continuous and satisfy $\mathbf{H}_2$, $\mathbf{H}_3$, $\mathbf{H}_5$ and $\mathbf{H}_6$ with constants that are uniform in $n$, and such that $b^n,\sigma^n$ and $F^n$ converges uniformly to $b,\sigma$ and $F$ on any compact subset of $H$. We select a weak solution to the following SPDE that lives on some probability space $(\Omega,\mathbb{P},\mathcal{F})$,
$$dX_t=A X_t dt+b(t,X_t)dt+(-A)^{\vartheta}F(t,X_t)dt+\sigma(t,X_t)dW_t,\quad X_0=x,$$ and on the same probability space we solve the following two SPDEs thanks to Lipschitz continuity of various coefficients, given the adapted process $X_t$:
$$dX^n_t=A X^n_t dt+b^n(t,X^n_t)dt+(-A)^{\vartheta}F^n(t,X^n_t)dt+\sigma^n(t,X^n_t)dW_t,\quad X^n_0=x,$$
 and
$$\begin{aligned}
d\widetilde{X}^n_t&=A \widetilde{X}^n_t dt+b^n(t,\widetilde{X}^n_t)dt\\&+(-A)^\vartheta F^n(t,\widetilde{X}^n_t)dt+\lambda (X_t-\widetilde{X}_t^n)dt +\sigma^n(t,\widetilde{X}^n_t)dW_t,\quad \widetilde{X}_0^n=x.\end{aligned}$$

Given a compact subset $K\subset H$ with $x\in K$ constructed via Proposition \ref{proposition1.2}, denote by $$\Delta_K^n:=\sup_{t>0}\sup_{y\in K}\{|b^n(t,y)-b(t,y)|+|\sigma^n(t,y)-\sigma(t,y)|_{\mathcal{L}(H_0,H)}+|F^n(t,y)-F(t,y)|\},$$
and set $\tau_K^n:=\inf\{t\geq 0:|X_t-\widetilde{X}_t^n|\geq 2\Delta_K^n\}.$ Also denote by $\theta_K:=\inf\{t\geq 0: X_t\notin K\}.$ We set $$\tau=\tau_K^n\wedge\theta_K$$ and $$\lambda=({\Delta_K^n})^{\gamma -1}$$ 
for some value of $\gamma>0$ to be fixed later.

\textbf{Step 1. Contraction estimate.}
For $0\leq t\leq \tau$, assume for simplicity that $\Delta_K^n\in[0,1)$, we may compute via triangle inequality that for some $n$-independent constant $C>0$,
$$
    |b(t,X_t)-b^n(t,\widetilde{X}_t^n)|\leq M(2\Delta_K^n)^{\alpha}+\Delta_K^n\leq C (\Delta_K^n)^\alpha,\quad t\leq \tau
$$ 
$$|\sigma(t,X_t)-\sigma^n(t,\widetilde{X}_t^n)|_{\mathcal{L}(H_0,H)}\leq M(2\Delta_K^n)^\beta+\Delta_K^n\leq C (\Delta_K^n)^\beta,\quad t\leq\tau,$$
and 
 $$|F(t,X_t)-F^n(t,\widetilde{X}_t^n)|\leq M(2\Delta_K^n)^{\zeta}+\Delta_K^n\leq C (\Delta_K^n)^\zeta,\quad t\leq \tau,$$
where $M$ is the constant appearing in Assumptions $\mathbf{H}_2$, $\mathbf{H}_3$ and $\mathbf{H}_6$.

A direct computation yields that, for any $t\leq\tau$,
$$\begin{aligned}
|X_t-\widetilde{X}_t^n|&\leq e^{-\lambda t}\left|\int_0^t S(t-s)e^{\lambda s }[b(s,X_s)-b^n(s,\widetilde{X}_s^n)]ds\right|\\&+\left|e^{-\lambda t}\int_0^t S(t-s)e^{\lambda s }[\sigma(s,X_s)-\sigma^n(s,\widetilde{X}_s^n)]dW_s\right|\\&+\left|\int_0^t e^{-\lambda(t-s)}(-A)^{\vartheta}S(t-s)[F(s,X_s)-F^n(s,\widetilde{X}_s^n)]ds\right|
.\end{aligned}$$

Now arguing as in \eqref{complementary}, using the auxiliary estimate \eqref{burgersusefulestimate}, note that we choose $\lambda:=(\Delta_K^n)^{\gamma -1}$, we deduce that 
 for each $\eta\in(0,\eta_0)$ and each sufficiently large  $m\in\mathbb{N}_+$, we may find a constant $C=C_{\eta,m}$ satisfying, for any $R>0$,
\begin{equation}\label{burgerscomplementary}\mathbb{P}(\sup_{t\in[0,\tau]}|X_t-\widetilde{X}_t^n|>C (\Delta_K^n)^{1+\alpha-\gamma}+C (\Delta_K^n)^{(1-\gamma)(1-\vartheta)+\zeta}+C(\Delta_K^n)^{\frac{1-\gamma}{2}\eta+\beta}R)\leq \frac{C}{R^m},\quad R\geq 2.\end{equation}

By our assumption $\alpha\in(0,1]$, $\beta\in(1-\frac{\eta_0}{2},1]$ and $\zeta\in(\vartheta,1]$, we can find $\eta\in(0,\eta_0)$ sufficiently close to $\eta_0$ and $\gamma\in(0,1)$ sufficiently close to $0$, such that the following holds at the same time:
\begin{equation}
    \label{burgerscondition2}
\begin{cases}
\gamma\in(0,\alpha),\\
(1-\gamma)(1-\vartheta)+\zeta>1,\\
\frac{1-\gamma}{2}\eta+\beta>1.\\
\end{cases}\end{equation}
From this we can see that the constraint $\zeta\geq\vartheta$ arises from the $\lambda^{-\vartheta}$ factor in the estimate \eqref{burgersusefulestimate}.

Then we choose $\chi\in(0,\frac{1-\gamma}{2}\eta+\beta-1]$ and set $R:=(\Delta_K^n)^{-\chi}$ in the estimate \eqref{burgerscomplementary}.
A simple computation shows that the powers of $\Delta_K^n$ in the bracket of \eqref{burgerscomplementary} are all larger than one.

Denote by $$\Omega_\nu:=\{\sup_{t\in[0,\tau]}|X_t-\widetilde{X}_t^n|>C (\Delta_K^n)^{1+\alpha-\gamma}+C (\Delta_K^n)^{(1-\gamma)(1-\vartheta)+\zeta}+C(\Delta_K^n)^{\frac{1-\gamma}{2}\eta+\beta-\chi}\}.$$
Upon choosing $\Delta_K^n$ sufficiently small, we deduce that $|X_t-\widetilde{X}_t^n|\leq\Delta_K^n$ for all $t\leq\tau$ on $\Omega\setminus\Omega_\nu$. Since the processes have continuous trajectories, we deduce that we indeed have $$\theta_K\wedge T\leq \tau_K^n$$ on $\Omega\setminus\Omega_\nu$. From this we conclude that, for any $\kappa>0$,
\begin{equation}
   \label{burgersconvergeinprob} 
\mathbb{P}(\sup_{t\in[0,T\wedge \theta_K]} |X_t-\widetilde{X}_t^n|>\kappa)\to 0,\quad n\to\infty.\end{equation}

The previous computations also imply that $\mathbb{P}(\Omega_\nu)\to 0$, so that
\begin{equation}\label{gaagagagagagagagaga}
    \mathbb{P}(\tau_K^n< \theta_K\wedge T)\to 0,\quad n\to\infty.
\end{equation}

\textbf{Step 2. Girsanov estimate.} Consider another auxiliary SPDE
$$\begin{aligned}
d\widetilde{\underline{X}}^n_t&=A \widetilde{\underline{X}}^n_t dt+b^n(t,\widetilde{\underline{X}}^n_t)dt\\&+(-A)^{\vartheta}F^n(t,\widetilde{\underline{X}}^n_t)dt+\lambda (X_t-\widetilde{\underline{X}}_t^n)dt 1_{t\leq\tau} +\sigma^n(t,\widetilde{\underline{X}}^n_t)dW_t,\quad \widetilde{\underline{X}}_0^n=x.\end{aligned}$$ This SPDE has a unique strong solution thanks to the fact that $\tau$ is uniquely defined and all the coefficients are Lipschitz continuous.

Arguing as in \eqref{variationineq}, we deduce that for any $T>0$ we can find a constant $C$ (uniform in $n$) such that
\begin{equation}\label{burgersvariationineq}
d_{TV}(\operatorname{Law}(X^n|_{[0,T]}),\operatorname{Law}(\widetilde{\underline{X}}^n|_{[0,T]}))\leq C T^{1/2}(\Delta_K^n)^\gamma.\end{equation}

\textbf{Step 3. Combining existing bounds.}
The remaining proof of weak uniqueness follows from the same argument as those in the proof of Theorem \ref{theorem1.1}. We only need to combine \eqref{burgersconvergeinprob}, \eqref{burgersvariationineq} and the fact that $\widetilde{X}_t^n=\widetilde{\underline{X}}_t^n$ whenever $t\leq \tau$, and $\mathbb{P}(\tau\leq T)$ can be made arbitrarily small by setting $n\to\infty$ ans $K$ arbitrarily large thanks to \eqref{gaagagagagagagagaga}.
\end{proof}

\section{Long time behaviour}\label{section3!}

\subsection{Choice of a metric and contracting properties}
\label{sec3.145}
In this subsection we give a proof to the following theorem that does not rely on the assumed Lyapunov condition. This choice of metric is inspired by \cite{kulik2020well}.

We consider the distance function $$d_{N,\gamma}(x,y):=(N|x-y|^\gamma)\wedge 1,\quad N\geq 1,\gamma\in[0,1].$$
 The constant $\gamma>0$ and $N>0$ will be chosen in a problem specific way, and is different in every other applications.

Before stating the theorem, we warn the reader that the distance $d_{N,\gamma}$ is not globally comparable to the usual distance on $H$, and in the next result we will choose $N$ and $\gamma$ such that the dynamics is contracting at a \textit{specific} time $t$ with respect to $d_{N,\gamma}$: this result alone does not say anything about the long time behavior of solutions. Indeed it is easy to construct solutions moving away from a fixed region in long time, yet for a specific time $t$ and specific choice of $N,\gamma$ the following \textit{contraction} claim still holds.

\begin{theorem}\label{contractionmain}
 Assume that all the assumptions of Theorem \ref{theorem1.1} are satisfied, and that $b$ and $\sigma$ do not depend on time. For each $x\in H$ denote by $(X_t^x)_{t\geq 0}$ the solution to \eqref{evolutionequation} with initial value $x$. Then for any $D>0$ and fixed $t>0$ we may find some $\gamma\in(0,1)$, some positive integer $N$, and some $\theta\in(0,1)$ depending on $t$ such that
\begin{equation}\label{22111}d_{N,\gamma}(\operatorname{Law}(X_t^x),\operatorname{Law}(X_t^y))\leq \theta  d_{N,\gamma}(x,y),\quad \text{ given }|x|\leq D,|y|\leq D.\end{equation}
and 
\begin{equation}\label{22112}d_{N,\gamma}(\operatorname{Law}(X_t^x),\operatorname{Law}(X_t^y))\leq \theta  d_{N,\gamma}(x,y),\quad  \text{ given } d_{N,\gamma}(x,y)<1,\end{equation} 
where for two Borel probability measures $\mu$ and $\nu$ on the Hilbert space $H$ we define $$
d_{N,\gamma}(\mu,\nu)=\inf_{\lambda\in\mathcal{C}(\mu,\nu)}d_{N,\gamma}(x,y)\lambda(dx,dy)
$$ where $\mathcal{C}(u,v)$ denotes the set of all couplings between $\mu$ and $\nu$.

(Warning again: these estimates hold only at time $t$, and may not hold on any longer time period).

\end{theorem}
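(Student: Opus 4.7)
The plan is to adapt the three-process generalized coupling from the proofs of Theorems \ref{theorem1.1} and \ref{burgerstheorem1} so that its combined estimate yields a contraction of $d_{N,\gamma}$ rather than the qualitative convergence $\Delta_K^n\to 0$ used there. Taking Lipschitz approximations $b^n,\sigma^n$ as in Subsection \ref{subsection2.112}, I shall work on a common probability space with three strong solutions: $X^{x,n}_t,\,X^{y,n}_t$ starting from $x,y$, and an auxiliary process
\[
 d\widetilde{X}^{y,n}_t=A\widetilde{X}^{y,n}_t\,dt+b^n(\widetilde{X}^{y,n}_t)\,dt+\lambda(X^{x,n}_t-\widetilde{X}^{y,n}_t)\mathbf{1}_{t\le\tau_n}\,dt+\sigma^n(\widetilde{X}^{y,n}_t)\,dW_t,
\]
with $\tau_n:=\inf\{s:|X^{x,n}_s-\widetilde{X}^{y,n}_s|\ge 2|x-y|\}$. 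The triangle inequality together with the trivial bound $d_{N,\gamma}\le d_{TV}$ (since the metric $d_{N,\gamma}$ is bounded by $1$) gives
\[
 d_{N,\gamma}(\operatorname{Law}(X^{x,n}_t),\operatorname{Law}(X^{y,n}_t))\le \mathbb{E}[d_{N,\gamma}(X^{x,n}_t,\widetilde{X}^{y,n}_t)]+d_{TV}(\operatorname{Law}(\widetilde{X}^{y,n}_t),\operatorname{Law}(X^{y,n}_t)),
\]
and I will balance the two summands by tuning $\lambda$, $N$, and $\gamma$.

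Setting $\lambda=c|x-y|^{\gamma-1}$ and choosing $\gamma\in(0,\alpha)$, $\eta\in(0,\eta_0)$ as in Theorem \ref{theorem1.1} so that $1+\alpha-\gamma>1$ and $(1-\gamma)\eta/2+\beta>1$, the stopping-time version of Theorem \ref{stochasticinte} applied to $Y^n_t:=X^{x,n}_t-\widetilde{X}^{y,n}_t$—with Hölder bounds $|b(X^{x,n}_s)-b^n(\widetilde{X}^{y,n}_s)|\le C|x-y|^\alpha$ and $|\sigma(X^{x,n}_s)-\sigma^n(\widetilde{X}^{y,n}_s)|_{\mathcal{L}(H_0,H)}\le C|x-y|^\beta$ valid on $\{s\le\tau_n\}$—plus Markov's inequality will yield $|Y^n_t|\le C|x-y|^{1+\delta}$ on an event of probability at least $1-C|x-y|^{\chi m}$, for some $\delta>0$, suitable $\chi>0$ and $m$ large. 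Meanwhile the truncated extra drift has sup norm at most $2c|x-y|^\gamma$, so Proposition \ref{proposition2.3} gives $d_{TV}\le 2Ct^{1/2}c|x-y|^\gamma$. Adding these,
\[
 d_{N,\gamma}(\operatorname{Law}(X^{x,n}_t),\operatorname{Law}(X^{y,n}_t))\le NC|x-y|^{(1+\delta)\gamma}+C|x-y|^{\chi m}+2Ct^{1/2}c|x-y|^\gamma.
\]

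For \eqref{22112} where $|x-y|<N^{-1/\gamma}$, dividing by $d_{N,\gamma}(x,y)=N|x-y|^\gamma$ turns the three terms into $C|x-y|^{\delta\gamma}$, $C|x-y|^{\chi m-\gamma}/N$ and $2Ct^{1/2}c/N$, all of which can be made strictly less than $1/3$ once $\chi m>\gamma$ and $N$ is sufficiently large, yielding $\theta<1$. This also handles the $|x-y|<N^{-1/\gamma}$ subcase of \eqref{22111}. For the remaining subcase of \eqref{22111}, namely the moderate regime $|x-y|\in[N^{-1/\gamma},2D]$ where $d_{N,\gamma}(x,y)=1$, I would freeze $c=c(D,t)$ so small that $d_{TV}\le 2Ct^{1/2}c(2D)^\gamma\le 1/4$; the same factorisation estimate then produces a $c,D,t$-dependent constant $\mathcal{C}$ bounding $|Y^n_t|$ on a high-probability event (now using $|x-y|\le 2D$ as the only input), and $N=N(D,t)$ must be selected so that $N^{-1/\gamma}\ge K\mathcal{C}$ for a constant $K$ chosen large enough in terms of $\gamma$ to guarantee $\mathbb{E}[d_{N,\gamma}(X^{x,n}_t,\widetilde{X}^{y,n}_t)]\le K^{-\gamma}+o(1)<3/4$; this keeps the total below~$1$. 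Finally, $n\to\infty$ transfers the bound to the weak mild solution via Proposition \ref{proposition1.2} and the uniform-on-compacts convergence of Subsection \ref{subsection2.112}.

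The hardest part will be the simultaneous compatibility of the two regimes: \eqref{22112} requires $N$ large enough that $2Ct^{1/2}c/N$ is negligible, while the moderate-distance subcase of \eqref{22111} forces $N$ below the ceiling $(K\mathcal{C}(c,D,t))^{-\gamma}$. Since $\mathcal{C}$ itself grows like $c^{-\eta/2}$ as $c$ shrinks, one cannot take $c$ arbitrarily small, and a careful joint choice of $c,\gamma,N$—after fixing $\eta$ close to $\eta_0$ so that $\delta>0$—is needed to place $N$ inside the compatible window determined by $D$ and $t$. This delicate matching is the crux of the proof, and it is precisely the freedom of letting $N,\gamma,\theta$ depend on $D$ and $t$ that makes the window non-empty.
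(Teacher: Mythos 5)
Your treatment of the close regime $d_{N,\gamma}(x,y)<1$, i.e.\ \eqref{22112}, is essentially the paper's own argument (generalized coupling with added drift $\lambda(X-\widetilde Y)1_{t\le\tau}$, $\lambda=|x-y|^{\gamma-1}$, Girsanov/Pinsker for the total-variation leg, Theorem \ref{stochasticinte} for the contraction leg, then division by $N|x-y|^\gamma$ and $N$ large), and that part is sound. The genuine gap is the moderate regime $|x-y|\in[N^{-1/\gamma},2D]$ of \eqref{22111}, where you try to reuse the same coupling: the ``compatible window'' for $N$ that you hope to exhibit is empty. To make $\mathbb{E}\,d_{N,\gamma}(\xi_1,\xi_2)$ bounded away from $1$ you need $N\mathcal{C}^\gamma<1$ with $\mathcal{C}\ge e^{-\lambda t}|x-y|$ (the deterministic contraction term dominates here, not the $c^{-\eta/2}$ noise term you flag); at the worst case $|x-y|=2D$ this forces $e^{\lambda t}> N^{1/\gamma}\,2D$, i.e.\ $\lambda\gtrsim t^{-1}\log\bigl(N^{1/\gamma}D\bigr)$, a large constant once $N\ge N_0$ as the close regime demands. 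But then the Girsanov cost in Proposition \ref{proposition2.3} is of order $\lambda\,|x-y|\,t^{1/2}\sim\lambda D t^{1/2}\gg1$ and Pinsker returns a vacuous bound. Conversely, keeping $c$ small enough that $d_{TV}\le 1/4$ keeps $\lambda$ bounded, so $\mathcal{C}\gtrsim e^{-\lambda t}D$ is of order $D$ and $N\mathcal{C}^\gamma\wedge1=1$ for \emph{every} integer $N\ge1$ once $D\ge1$: the coupling leg contributes essentially $1$ and no $\theta<1$ emerges. The generalized coupling only exploits closeness of the initial data and cannot, by itself, handle macroscopically separated starting points.

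The paper closes this case by a different mechanism that your proposal is missing: a uniform smallness (irreducibility) estimate. One introduces the strongly damped equation $dX^{\lambda,x}_t=AX^{\lambda,x}_t\,dt-\lambda X^{\lambda,x}_t\,dt+b(X^{\lambda,x}_t)\,dt+\sigma(X^{\lambda,x}_t)\,dW_t$ and shows (Lemma \ref{lemma3.1}) that $\inf_{|x|\le D}\mathbb{P}(|X^{\lambda,x}_t|\le\delta)\ge\tfrac12$ for $\lambda$ large. The relative entropy between the damped and undamped laws is a large but finite constant $C_2$, and the inequality $\nu(A)\ge N^{-1}\mu(A)-\bigl(H_{\mathrm{rel}}(\mu\mid\nu)+\log2\bigr)/(N\log N)$ transfers a tiny but uniform lower bound $\inf_{|x|\le D}\mathbb{P}(|X_t^{x}|\le\delta)\ge 1/L(D,\delta)$ to the original process; note Pinsker would give nothing here, which is precisely why the one-sided entropy inequality is used. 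An \emph{independent} coupling of $X_t^x$ and $X_t^y$ then puts both marginals in the $\delta$-ball with probability at least $L^{-2}$, where $d_{N,\gamma}\le\tfrac12$, yielding $\theta\le 1-\tfrac12 L^{-2}<1$ on $\{d_{N,\gamma}(x,y)=1,\ |x|\le D,\ |y|\le D\}$, for whatever large $N$ the close regime has already fixed. You need this ingredient (or a substitute for it) to complete \eqref{22111}.
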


This theorem is remarkable as we have not assumed any contracting assumptions on the dynamics $X_t$. It is crucial to see that it is indeed the non-degeneracy of noise that leads to such estimates. If we have a deterministic PDE with more than one invariant measure, the claim of this theorem is false. In the proof we will couple $X_t^x$ to another process with a linear contraction, and control that process and $X_t^y$ via a total variation estimate. The fact that the latter estimate is in terms of total variation distance rather than Wasserstein distance is the central part of the argument, with which we can set $N$ to be very large in the distance $d_{N,\gamma}$, and quantitatively kill the contribution from total variation part. Such a total variation estimate, of course, relies on non-degeneracy of the noise.

The rest of the subsection is devoted to the proof of Theorem \ref{contractionmain}.

\subsubsection{Setting up the coupling}

Fix two initial values $x,y\in H$. To couple the two processes
\begin{equation}\label{eqx}dX_t=AX_t dt+b(X_t)dt+\sigma(X_t)dW_t,\quad X_0=x\end{equation}
and 
\begin{equation}\label{eqy}dY_t=AY_t dt+b(Y_t)dt+\sigma(Y_t)dW_t,\quad Y_0=y,\end{equation}
we introduce a constant $\lambda>0$ and a stopping time $\tau$ whose value will be determined later, and we consider an auxiliary control process
\begin{equation}\label{controlprocess}
d\widetilde{Y}_t=A\widetilde{Y}_t dt+b(\widetilde{Y}_t)dt+\lambda(X_t-\widetilde{Y}_t)dt1_{t\leq\tau}+\sigma(\widetilde{Y}_t)dW_t,\quad \widetilde{Y}_0=y.\end{equation}
Strictly speaking, we also need to approximate $b$ and $\sigma$ by Lipschitz mappings as detailed in Section \ref{subsection2.112}, but we omit this auxiliary approximation procedure as it is exactly the same in the previous proofs of weak uniqueness. Moreover, since weak uniqueness has been proved, the law of $\widetilde{Y}_t$ does not depend on the stopping time $\tau$ whenever $t\leq\tau$, because the law of $\widetilde{Y}_t$ arises as the limit of SPDEs with Lipschitz coefficients, whose distribution is fixed irrespective of the stopping time $\tau$.

We set $$\tau=\tau_{x,y}:=\inf\{t\geq 0:|X_t-\widetilde{Y}_t|\geq 2|x-y|\},$$
and set $\lambda:=|x-y|^{\gamma -1}$ for some $\gamma\in(0,1)$ to be determined later. 

It follows from Proposition \ref{proposition2.3} that for any $T>0$ we can find $C$ such that 
\begin{equation}
    \label{controlgirsanov}
d_{TV}(\operatorname{Law}(Y|_{[0,T]}),\operatorname{Law}(\widetilde{Y}|_{[0,T]}))\leq CT^{1/2} |x-y|^\gamma.\end{equation}

Writing \eqref{eqx} and \eqref{eqy} in mild formulations, we have, for any $t\leq\tau$,
\begin{equation}\label{difference}
\begin{aligned}
|X_t-\widetilde{Y}_t|\leq e^{-\lambda t}|x-y|&+e^{-\lambda t}\left|\int_0^t S(t-s)e^{\lambda s}(b(X_s)-b(\widetilde{Y}_s))ds\right|\\&+e^{-\lambda t}\left|\int_0^t S(t-s)e^{\lambda s}(\sigma(X_s)-\sigma(\widetilde{Y}_s))dW_s\right|.
\end{aligned}\end{equation}
Using the Hölder continuity $\mathbf{H}_2$ and $\mathbf{H}_3$ of $b$ and $\sigma$, definition of the stopping time $\tau$ and Theorem \ref{stochasticinte}, we obtain the following deviation estimate: for each  $\eta\in(0,\frac{1}{2})$, we have that for any suficiently large $m\in\mathbb{N}_+$ we may find some fixed constant $M=M_{m,\eta}>0$ such that 
\begin{equation}\label{fififi}
    \mathbb{P}(\sup_{0\leq t\leq \tau\wedge T}(|X_t-\widetilde{Y}_t|- e^{-\lambda t}|x-y|)\geq  M\lambda^{-1}|x-y|^{\alpha}+M\lambda^{-\frac{1}{2}\eta}|x-y|^{\beta} R)\leq \frac{M}{R^m},\quad R>0.
\end{equation}

Recall we made the choice $\lambda=|x-y|^{\gamma -1}$. We may find  $\gamma>0$ sufficiently small and $\eta\in(0,\eta_0)$ sufficiently close to $\eta_0$ such that
\begin{equation}\label{730f}
\begin{cases}
\gamma\in(0,\alpha)\\
\frac{1-\gamma}{2}\eta+\beta>1.\\
\end{cases}\end{equation} holds, as it did in
\eqref{condition2}.

Then we find some \begin{equation}\label{731f}0<\chi<\frac{1}{2}\min(\alpha-\gamma,\beta+\frac{1-\gamma}{2}\eta-1)\end{equation} and choose furthermore $$R=|x-y|^{-\chi}$$  to obtain
\begin{equation}
    \mathbb{P}(\sup_{0\leq t\leq\tau\wedge T} (|X_t-\widetilde{Y_t}|-e^{-|x-y|^{\gamma -1}t}|x-y|)\geq M_m|x-y|^{1+\chi})\leq M_m |x-y|^{2m\chi},
\end{equation}
where we readily compute that $\lambda^{-1}|x-y|^\alpha= |x-y|^{1-\gamma+\alpha}\leq |x-y|^{2\chi+1}\leq|x-y|^{\chi+1}$ for $|x-y|\leq 1$ and that $\lambda^{-\frac{1}{2}\eta}|x-y|^\beta R\leq |x-y|^{\beta+\frac{1-\gamma}{2}\eta-\chi}\leq |x-y|^{\chi+1}.$

Denote by $$\Omega_\nu:=\{\sup_{0\leq t\leq \tau\wedge T}\{|X_t-\widetilde{Y}_t|-e^{-|x-y|^{\gamma -1}t}|x-y|\}\geq M_m|x-y|^{1+\chi}\},$$ then by choosing $|x-y|$ small enough, on the complementary set $\Omega\setminus\Omega_\nu$ one must have $\sup_{0\leq t\leq\tau_\wedge T} |X_t-Y_t|\leq 2|x-y|$. Thus by continuity of the trajectories we must have \begin{equation}\label{stoppingtimestimate}\tau=\tau_{x,y}\geq T\end{equation}on $\Omega\setminus\Omega_\nu$, so we can deduce that
\begin{equation}\label{1.111}
    \mathbb{P}(\sup_{0\leq t\leq T} (|X_t-\widetilde{Y_t}|-e^{-|x-y|^{\gamma -1}t}|x-y|)\geq M_m|x-y|^{1+\chi})\leq M_m |x-y|^{2m\chi}.
\end{equation}

Now we make a choice of $m\in\mathbb{N}_+$ such that \begin{equation}\label{limitofm}m\chi \geq \gamma,\end{equation}
and fix the choice of the constant $M:=M_m$ in the rest of the argument.

Since $$e^{-\nu^{\gamma -1}T}+M\nu^\chi\to 0\quad \text{ as   }\nu\to 0,$$ 
we will choose some sufficiently small  $\nu_0>0$ and assume that $|x-y|\leq \nu_0$ to obtain, as a consequence of \eqref{1.111}, \eqref{stoppingtimestimate} and \eqref{limitofm}: 
\begin{equation} \label{deviationestimate11}
    \mathbb{P}(|X_T-\widetilde{Y}_T|\geq \frac{1}{2}|x-y|)\leq M |x-y|^{2\gamma}.
\end{equation}

Since probability is upper bounded by 1, we restate the above estimate as
\begin{equation} \label{deviationestimate}
    \mathbb{P}(|X_T-\widetilde{Y}_T|\geq \frac{1}{2}|x-y|)\leq M |x-y|^{2\gamma}\wedge 1.
\end{equation}

We can also derive the following estimate which will be useful later:
\begin{equation} \label{longtime}
    \mathbb{P}(\sup_{0\leq t\leq T}|X_t-\widetilde{Y}_t|\geq 2|x-y|)\leq M |x-y|^{2\gamma}\wedge 1.
\end{equation}

\subsubsection{The case of close enough initial values}\label{Section3.2}
In this section we consider initial values $x,y\in H$ that are close in the sense that $$d_{N,\gamma}(x,y)<1.$$
We combine \eqref{deviationestimate} with the control \eqref{controlgirsanov}to deduce a contraction estimate for $|X_t-Y_t|$ under the distance $d_{N,r}.$ The proof is very similar to Proposition 5.1 of \cite{kulik2020well}.

Recall that we have derived \eqref{deviationestimate} assuming $x$ and $y$ are close enough: for some $\nu_0>0$ sufficiently small, $|x-y|\leq\nu_0$. Thus we choose $N$ large enough \footnote{once we have fixed a value of $\gamma>0$} such that $d_{N,\gamma}(x,y)=1$ for any pair $(x,y)$ with $|x-y|\geq\nu_0.$ We will work under this assumption throughout the proof without further mentioning.

For any two initial values $x,y\in H$, consider the SPDEs $X_t$ \eqref{eqx}, $Y_t$ \eqref{eqy}, as well as the control process $\widetilde{Y}_t$ \eqref{controlprocess}.

Consider a coupling $(\xi_1,\xi_2)$ where $\xi_1$ has law $X_t$, $\xi_2$ has law $\widetilde{Y}_t$ and that $(X_t,\widetilde{Y}_t)$ satisfy the contraction estimate \eqref{deviationestimate} with $t=T$.

Consider another coupling $(\xi_2,\xi_3)$ where $\xi_2$ has law $\widetilde{Y}_t$ and $\xi_3$ has law $Y_t$, such that $(\xi_2,\xi_3)$ realizes the total variance distance: $\mathbb{P}(\xi_2\neq \xi_3)= d_{TV}(\operatorname{Law}(Y|_{[0,T]});\operatorname{Law}(\widetilde{Y}|_{[0,T]}))$. Then by \eqref{controlgirsanov}, $$\mathbb{P}(\xi_2\neq \xi_3)= d_{TV}(\operatorname{Law}(Y|_{[0,T]});\operatorname{Law}(\widetilde{Y}|_{[0,T]}))\leq CT^{1/2}|x-y|^\gamma.$$
We may now form a coupling $(\xi_1,\xi_3)$ of $(X_t,Y_t),$ possibly on a different probability space, via the coupling $(\xi_1,\xi_2)$ and $(\xi_2,\xi_3)$ and obtain the following estimate:
$$\begin{aligned}
\mathbb{E}d_{N,\gamma}(\xi_1,\xi_3)&\leq \mathbb{E} d_{N,\gamma}(\xi_1,\xi_2)+\mathbb{P}(\xi_2\neq\xi_3)\\&\leq \mathbb{E}d_{N,\gamma}(\xi_1,\xi_2)1_{\{|\xi_1-\xi_2|\leq\frac{1}{2} |x-y|\}}+\mathbb{P}(|\xi_1-\xi_2|\geq \frac{1}{2}|x-y|)+\mathbb{P}(\xi_2\neq\xi_3).\end{aligned}
$$

Since we assume that $$d_{N,\gamma}(x,y)=N|x-y|^\gamma<1,$$ and therefore $N2^{-\gamma} |x-y|^\gamma<1,$ we combine the various estimates to deduce that 
\begin{equation}\label{goodgifts}\begin{aligned}\mathbb{E}d_{N,\gamma}(\xi_1,\xi_3)&\leq N2^{-\gamma} |x-y|^\gamma +M|x-y|^{2\gamma}+C|x-y|^\gamma\\ \leq& d_{N,\gamma}(x,y)\left(2^{-\gamma}+\frac{M|x-y|^{2\gamma}\wedge 1}{N|x-y|^\gamma}+\frac{C}{N}\right).\end{aligned}\end{equation}
Note that $\sup_{s\geq 0}\frac{M s^{2\gamma}\wedge 1}{s^\gamma}<\infty.$
Therefore, upon choosing $N$ large enough, we may find some $\theta_1\in(2^{-\gamma},1)$ such that, for some given $N_0$ depending on $\gamma$ and $\theta_1$, we have
\begin{equation}\mathbb{E}d_{N,\gamma}(\xi_1,\xi_3)\leq \theta_1 d_{N,\gamma}(x,y),\quad \text{ for all } N\geq N_0,\quad \text{all } d_{N,\gamma}(x,y)<1.\end{equation}
The constant $\theta_1$ depends on the Hölder continuity index of various coefficients, and also depends on the final time $t$, but does not depend on the initial value $x$ and $y$.

(From these computations, one can see why we really need a total variation distance bound between $\xi_2,\xi_3$, rather than merely a Wasserstein distance type bound, to guarantee that we can take $\theta_1\in(0,1)$. The total variation bound gives a coupling such that they are identical with high probability. Therefore we are free to modify the distance on $H$ by setting $N$ large, so that the contribution from this total variation part would be rather small. If the distance bound between $\xi_2,\xi_3$ were again of Wasserstein type, then setting $N$ large in the distance will male $d(\xi_1,\xi_2)$ and $d(\xi_2,\xi_3)$ grow at equal rate and thus we cannot get rid of the second term effectively. We stress again that such a total variation estimate originates from non-degeneracy of the noise, which is the only important part needed in this proof.)

By definition of the Wasserstein distance,  $$
d_{N,\gamma}\left(\operatorname{Law}(X_t),\operatorname{Law}(Y_t)\right)\leq \mathbb{E}d_{N,\gamma}(\xi_1,\xi_3),$$
so we conclude that \begin{equation}d_{N,\gamma}\left(\operatorname{Law}(X_t),\operatorname{Law}(Y_t)\right)
\leq \theta_1 d_{N,\gamma}(x,y),\quad N\geq N_0,\quad d_{N,\gamma}(x,y)<1.\end{equation}
Note that the choice of $N$ and $\gamma$ depend on the final time $t>0$. 

To unify notations in this paper, we rewrite this estimate as
\begin{equation}\label{localcontraction}d_{N,\gamma}\left(\operatorname{Law}(X_t^x),\operatorname{Law}(X_t^y)\right)
\leq \theta_1 d_{N,\gamma}(x,y),\quad N\geq N_0,\quad d_{N,\gamma}(x,y)<1.\end{equation}

One could also derive in the same way from \eqref{longtime} the following estimate
\begin{equation}\label{longtimespider}
    \sup_{0\leq s\leq t} d_{N,\gamma}\left(\operatorname{Law}(X_s^x),\operatorname{Law}(X_s^y)\right)
\leq 3 d_{N,\gamma}(x,y),\quad N\geq N_0,\quad d_{N,\gamma}(x,y)<1.
\end{equation}

\subsubsection{The case of distant initial values}

We finally derive the estimate for initial values $x,y$ such that $d_{N,\gamma}(x,y)=1,$ but that for some fixed $R>0$, $|x|\leq R$ and $|y|\leq R$.

We begin with a lemma that describes the possibility of visiting bounded subsets of $H$, which is similar to Proposition 5.2 of \cite{kulik2020well} and to the proof of various estimates here. 

\begin{lemma}\label{lemma3.1}
For each $x\in H$ and $\lambda>0$, denote by $X_t^{\lambda,x}$ the solution to the SPDE 
$$dX_t=A X_t dt-\lambda X_t dt+b(X_t)dt+\sigma(X_t)dW_t,\quad X_0=x,$$ where $b$ and $\sigma$ satisfy the assumptions $\mathbf{H}_1$ to $\mathbf{H}_5$.

Then for any $D>0$ and $\delta>0$, there exists $\lambda$ sufficiently large (depending only on $D$, $\delta$, $t$ and the constants in assumption $\mathbf{H}_1$ to $\mathbf{H}_5$) such that  
$$\inf_{|x|\leq D} \mathbb{P}(|X_t^{\lambda,x}|\leq\delta)\geq\frac{1}{2}.$$
\end{lemma}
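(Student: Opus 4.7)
The plan is to use the mild formulation of $X_t^{\lambda,x}$ with the damping $-\lambda$ absorbed into the propagator $e^{-\lambda(t-s)}\mathcal{S}(t-s)$, so that every term in the decomposition
\[
X_t^{\lambda,x}=e^{-\lambda t}\mathcal{S}(t)x+\int_0^t e^{-\lambda(t-s)}\mathcal{S}(t-s)b(X_s^{\lambda,x})\,ds+\int_0^t e^{-\lambda(t-s)}\mathcal{S}(t-s)\sigma(X_s^{\lambda,x})\,dW_s
\]
becomes small as $\lambda\to\infty$. The deterministic piece satisfies $|e^{-\lambda t}\mathcal{S}(t)x|\leq e^{-\lambda t}D$ since $\mathcal{S}$ is a contraction semigroup by $\mathbf{H}_1$. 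For the drift integral, the linear growth bound $\mathbf{H}_5$ together with $\int_0^t e^{-\lambda(t-s)}\,ds\leq \lambda^{-1}$ gives a pathwise estimate of order $\lambda^{-1}\bigl(1+\sup_{s\leq t}|X_s^{\lambda,x}|\bigr)$. For the stochastic integral, Theorem \ref{stochasticinte} applied with $\Phi(s)=\sigma(X_s^{\lambda,x})$ and the present damping parameter $\lambda$ yields, for $p$ large enough and any $\eta\in(0,\eta_0)$,
\[
\mathbb{E}\sup_{s\leq t}\Bigl|\int_0^s e^{-\lambda(s-r)}\mathcal{S}(s-r)\sigma(X_r^{\lambda,x})\,dW_r\Bigr|^p\leq c_{\eta,p,t}\,\lambda^{-\eta p/2}\,\mathbb{E}\bigl(1+\sup_{s\leq t}|X_s^{\lambda,x}|\bigr)^p.
\]

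Next I would close a uniform a priori moment bound. Raising the three-term decomposition to the $p$-th power, taking suprema, and combining the estimates above yields
\[
\mathbb{E}\sup_{s\leq t}|X_s^{\lambda,x}|^p\leq C_p D^p+\rho(\lambda)\bigl(1+\mathbb{E}\sup_{s\leq t}|X_s^{\lambda,x}|^p\bigr),
\]
with $\rho(\lambda):=C_p(\lambda^{-p}+\lambda^{-\eta p/2})\to 0$. Choosing $\lambda$ so large that $\rho(\lambda)\leq 1/2$ absorbs the moment on the right and gives $\mathbb{E}\sup_{s\leq t}|X_s^{\lambda,x}|^p\leq 2(C_p D^p+1)$ uniformly in $|x|\leq D$ and in $\lambda\geq \lambda_0$. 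To make this rigorous under the mere Hölder hypotheses $\mathbf{H}_2$ and $\mathbf{H}_3$, one first runs the argument on the Lipschitz approximations $b^n,\sigma^n$ from Section \ref{subsection2.112}, where strong solutions exist, and then passes to the limit through the weak convergence established in the proof of Theorem \ref{theorem1.1}, using Fatou to retain the moment bound for the weak mild solution.

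Plugging the uniform moment bound back into the mild decomposition evaluated at the single time $t$ gives $\mathbb{E}|X_t^{\lambda,x}|^p\leq C_{D,p,t}(e^{-\lambda p t}+\lambda^{-p}+\lambda^{-\eta p/2})$ uniformly in $|x|\leq D$, which tends to zero as $\lambda\to\infty$. A single application of Markov's inequality then yields
\[
\sup_{|x|\leq D}\mathbb{P}(|X_t^{\lambda,x}|>\delta)\leq \delta^{-p}\sup_{|x|\leq D}\mathbb{E}|X_t^{\lambda,x}|^p,
\]
and choosing $\lambda$ sufficiently large forces the right-hand side below $1/2$, as required. The principal subtlety is the closure of the a priori moment estimate: it succeeds cleanly only because the damping $-\lambda X_t$ dresses every summand in the mild formulation with the same factor $e^{-\lambda(t-s)}$, so the deterministic decay, the drift decay $\lambda^{-1}$, and the stochastic-integral decay $\lambda^{-\eta p/2}$ all arise simultaneously and can be absorbed in a single step rather than iteratively through Gronwall.
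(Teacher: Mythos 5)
Your argument is correct, but it follows a different route from the paper's. The paper localizes pathwise: it introduces the stopping time $\tau_D=\inf\{t\ge 0:|X_t^{\lambda,x}|\ge 2D\}$, so that before $\tau_D$ the linear-growth hypothesis $\mathbf{H}_5$ turns $|b(X_s^{\lambda,x})|$ and $|\sigma(X_s^{\lambda,x})|_{\mathcal{L}(H_0,H)}$ into the deterministic constant $M(1+D)$; a single application of Theorem \ref{stochasticinte} (with $\eta=\eta_0/2$ and $R=\lambda^{\eta_0/8}$) then gives a tail bound of order $\lambda^{-\eta_0/4}$ for $\sup_{s\le t\wedge\tau_D}\{|X_s^{\lambda,x}|-e^{-\lambda s}|x|\}$, and a continuity-of-trajectories argument shows $\tau_D\ge t$ off the exceptional event, from which the conclusion follows without ever needing a moment bound. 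You instead close a global $p$-th moment estimate by absorption: the damping factor $e^{-\lambda(t-s)}$ makes the drift and noise contributions $O(\lambda^{-1})$ and $O(\lambda^{-\eta p/2})$ times $\mathbb{E}(1+\sup_s|X_s^{\lambda,x}|)^p$, which can be reabsorbed once $\lambda$ is large, and you finish with Markov's inequality. This is legitimate, and you correctly flag the one genuine subtlety: the absorption step presupposes $\mathbb{E}\sup_{s\le t}|X_s^{\lambda,x}|^p<\infty$ a priori, which you secure by running the estimate on the Lipschitz approximations of Section \ref{subsection2.112} and passing to the limit with Fatou. Note also that the preamble to Theorem \ref{stochasticinte} assumes $\Phi\in L^\infty$, whereas your $\Phi(s)=\sigma(X_s^{\lambda,x})$ is only $p$-integrable; the bound as stated (with $\mathbb{E}\|\Phi\|_T^p$ on the right) does extend to this case by the same proof, but this is exactly the kind of issue the paper's stopping time is designed to sidestep. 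In short: your route trades the paper's pathwise localization for a moment bootstrap; it buys a cleaner quantitative decay $\mathbb{E}|X_t^{\lambda,x}|^p\to 0$, at the cost of an extra approximation-and-limit layer that the paper's argument avoids.
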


\begin{proof}
Consider the stopping time $\tau_D:=\inf\{t\geq 0: |X_t^{\lambda,x}|\geq 2D\}.$ By the linear growth property $\mathbf{H}_5$, we may find some $M>0$ such that $$|b(X_t^{\lambda,x})|\leq M(1+D),\quad |\sigma(X_t^{\lambda,x})|_{\mathcal{L}(H_0,H)}\leq M(1+D)$$ whenever $t\leq\tau_D$. Arguing with the mild formulation and estimating each term explicitly, using also Theorem \ref{stochasticinte} with $\eta=\frac{\eta_0}{2}$, we claim that
$$\mathbb{P}\left(\sup_{s\in[0,t\wedge\tau_D]} \{|X_s^{\lambda,x}|- e^{-\lambda s}|x|\}\geq \lambda^{-1}M(1+D)+\lambda^{-\frac{\eta_0}{4}}M(1+D)R\right)\leq \frac{C}{R^2},\quad R\geq 2.$$
Choosing $R=\lambda^{\frac{\eta_0}{8}}$ for $\lambda$ large, we see that 
\begin{equation}\label{event}\mathbb{P}\left(\sup_{s\in[0,t\wedge\tau_D]} \{|X_s^{\lambda,x}|- e^{-\lambda s}|x|\}\geq \lambda^{-1}M(1+D)+ \lambda^{-\frac{\eta_0}{8}}M(1+D)\right)\leq C \lambda^{-\frac{\eta_0}{4}}.\end{equation}
Arguing as in the previous proofs, we see that when $\lambda$ is sufficiently large, in the complement of the event in \eqref{event} one must have $\tau_D\geq t$. Now if we choose $\lambda$ sufficiently large, we can ensure that 
$\mathbb{P}(|X_t^{\lambda,x}|\leq \delta)\geq \frac{1}{2},$ uniformly over the initial value $|x|\leq D$. This completes the proof.
\end{proof}

In the following we fix this choice of $\lambda>0$. Via direct moment estimates, we may find 
$$C_1(\lambda):=\sup_{|x|\leq D}\mathbb{E}[\sup_{s\in[0,t]}|X^{\lambda,x}_s|^2]<\infty.$$ By arguing via Girsanov transform, we compute the relative entropy between solutions $X^x$ and $X^{\lambda,x}$ as follows:
\begin{equation}
    \label{relativedifference}\sup_{|x|\leq D} H_{\text{rel}}(\operatorname{Law}(X^{\lambda,x}|_{[0,t]})\mid\operatorname{Law}(X^{x}|_{[0,t]}))\leq \frac{\lambda t}{2}C_1(\lambda)\sup_{x\in H}|\sigma(x)^{-1}|:=C_2<\infty,\end{equation} where $H_{\text{rel}}$ denotes the relative entropy on path space $C([0,T];H).$

 We will use a powerful formula on relative entropy (see \cite{kulik2020well}, Appendix A or \cite{butkovsky2020generalized}, Appendix A): for two probability measures $\mu$ and $\nu$ on a common measure space $(X,\mathcal{B})$, and for any set $A\in\mathcal{B}$, we have for each $N\in\mathbb{N}_+$
$$\nu(A)\geq\frac{1}{N}\mu(A)-\frac{H_{\text{rel}}(\mu\mid\nu)+\log 2}{N\log N}.$$

In our case we consider $A:=\{\omega\in \mathcal{C}([0,t];H): |\omega_t|\leq\delta\},$ $\nu=\operatorname{Law}(X^x|_{[0,t]})$ and $\mu=\operatorname{Law}(X^{\lambda,x}|_{[0,t]}).$ Combining Lemma \ref{lemma3.1} and estimate \eqref{relativedifference}, we deduce that 
$$\inf_{|x|\leq D}\mathbb{P}(|X_t^x|\leq\delta)\geq\frac{1}{2N}-\frac{C_2+\log 2}{N\log N}.$$ 
Now we choose $N=\exp(4C_2+4\log 2)$ and deduce that 
\begin{equation}\label{finalconclusion}\inf_{|x|\leq D}\mathbb{P}(|X_t^x|\leq\delta)\geq\frac{1}{L(D,\delta)},\end{equation} where $L(D,\delta):=4\exp(4C_2+4\log 2)$ is a constant that only depends on $D$, $\delta$, $t$ and the constants in hypothesis $\mathbf{H}_1-\mathbf{H}_5.$

\subsubsection{Concluding the proof of Theorem \ref{contractionmain}}
\begin{proof}
Consider two initial values $x,y\in H$ such that $|x|\leq D$, $|y|\leq D$ and $d_{N,\gamma}(x,y)=1$, where the values of $N$ and $\gamma$ have been fixed in Section \ref{Section3.2}.

Consider the solutions $X_t^x$ and $X_t^y$. If $$|X_t^x|\leq\frac{1}{2^{1+1/\gamma}N^{1/\gamma}}\quad\text{ and }\quad |X_t^y|\leq \frac{1}{2^{1+1/\gamma}N^{1/\gamma}},$$ then $d_{N,\gamma}(X_t^x,X_t^y)\leq\frac{1}{2}.$ Otherwise we upper bound $d_{N,\gamma}(X_t^x,X_t^y)$ by 1. Using an independent coupling of $X_t^x$ and $X_t^y$, we compute that
\begin{equation}\label{largecontraction}
\begin{aligned}
    d_{N,\gamma}(\operatorname{Law}(X_t^x),\operatorname{Law}(X_t^y))&\leq d_{N,\gamma}(X_t^x,X_t^y)\\&\leq 1-\frac{1}{2}\left(\inf_{|z|\leq D}\mathbb{P}(|X_t^z|\leq\frac{1}{2^{1+1/\gamma}N^{1/\gamma}})\right)^2<1.\end{aligned}
\end{equation}
 Take $\theta$ to be the maximum of the constant $\theta_1$ in \eqref{localcontraction} and the constant in the right hand side of \eqref{largecontraction}, we conclude that 
 \begin{equation}d_{N,\gamma}\left(\operatorname{Law}(X_t),\operatorname{Law}(Y_t)\right)
\leq \theta d_{N,\gamma}(x,y),\quad |x|\leq D,|y|\leq D.\end{equation}
This finishes the proof.
\end{proof}

\subsection{Lyapunov function and exponential ergodicity}
\subsubsection{Existence and uniqueness of invariant measure}
We will need to use some abstract ergodic results from \cite{hairer2011asymptotic}, and is also related to \cite{hairer2006ergodicity}, \cite{hairer2011theory}, \cite{hairer2002exponential} and \cite{hairer2011yet}.
We begin with some notations from \cite{hairer2011asymptotic}. 
We say a function $d:H\times H\to [0,1]$ is distance-like if it is symmetric, lower-semicontinuous and such that $d(x,y)=0$ implies $x=y$. We extend $d$ to a positive function $\mathcal{M}_1(H)\times\mathcal{M}_1(H)\to\mathbb{R}_+$, with $\mathcal{M}_1(H)$ the set of all Borel probability measures on $H$, by 
\begin{equation}\label{wessagegag}d(\mu,\nu)=\inf_{\pi\in\mathcal{C}(\mu,\nu)}\int_{H\times H}d(x,y)\pi(dx,dy),\end{equation}
where $\mathcal{C}(\mu,\nu)$ consists of all the couplings of $\mu$ and $\nu$. This is exactly the 1-Wasserstein distance when $d$ is a metric on $H$.

Given a Markov operator $\mathcal{P}$ on $H$, we let $\mathcal{P}(x,\cdot)$ denote the transition probability distribution of $\mathcal{P}$ from the initial value $x\in H,$
Recall that for a Markov operator $\mathcal{P}$ on a measurable space $H$, we say a set $A\subset H$ is $d$-small if for some some $\epsilon>0,$
$$d(\mathcal{P}(x,\cdot),\mathcal{P}(y,\cdot))\leq 1-\epsilon$$
for any $x,y\in A$.
Now we are in the position to prove Theorem \ref{harristheorem}. We first show that there exists at most one invariant measure.

\begin{proof} Let $\mathcal{P}_t$ denote the Markov semigroup generated by $(X_t)$.
We check all the assumptions in Theorem 4.8 of \cite{hairer2011asymptotic} are satisfied. Fix some $t>0$ and set $D:=\sup\{|x|:x\in H, V(x)\leq 4K_V\}$ \footnote{The supremum $D$ is finite thanks to the assumption $V(x)\to\infty$ as $|x|\to\infty$.} in the setting of Theorem \ref{harristheorem}. Then we may find a choice of $N$ and $\gamma$ such that the distance function $d_{N,\gamma}$ satisfies
\begin{itemize}
    \item $\mathcal{P}_{t}$ is contracting in the sense \eqref{22112} for initial values $x,y\in H$ with $d_{N,\gamma}(x,y)<1$. 
    \item The sub-level set $\{x\in H:V(x)\leq 4K_V\}$ is $d$-small for $\mathcal{P}_{t}$.
\end{itemize}
Indeed, the first claim is fulfilled by \eqref{22112} and the second claim is fulfilled by \eqref{22111}.

Then by Theorem 4.8 of \cite{hairer2011asymptotic}, $\mathcal{P}_t$ can have at most one invariant measure, and if we define $\widetilde{d}(x,y)=\sqrt{d_{N,\gamma}(x,y)(1+V(x)+V(y))}$, there exists $t_*>0$ such that 
$$\widetilde{d}(\mathcal{P}_{t_*}\mu,\mathcal{P}_{t_*}\nu)\leq \frac{1}{2}\widetilde{d}(\mu,\nu)$$
for any probability measures $\mu,\nu\in\mathcal{M}_1(H)$.

By the elementary inequality 
$$1\wedge |x-y|^\gamma\leq 1\wedge N|x-y|^\gamma\leq N(1\wedge |x-y|^\gamma),$$
we may choose a larger $t_*$ and redefine $\widetilde{d}(x,y)=\sqrt{(1\wedge |x-y|^\gamma)(1+V(x)+V(y))}$ so that we still have
\begin{equation}\label{iterativeexist}\widetilde{d}(\mathcal{P}_{t_*}\mu,\mathcal{P}_{t_*}\nu)\leq \frac{1}{2}\widetilde{d}(\mu,\nu).\end{equation}

The last step is to prove there exists an invariant measure $\pi$ for $\mathcal{P}$. Fix a probability measure $\mu\in\mathcal{M}_1(H)$ such that $\int Vd\mu<\infty$. Applying \eqref{iterativeexist} iteratively, we get
 $$\widetilde{d}(\mathcal{P}_{(n+1)t_*}\mu,\mathcal{P}_{nt_*}\mu)\leq\frac{1}{2^n} \widetilde{d}(\mathcal{P}_{t_*}\mu,\mu).$$
 Since $\widetilde{d}$ dominates $d_0:=1\wedge |x-y|$, and the latter is a complete metric on $H$, we deduce that the Wasserstein distance associated with $\widetilde{d}$ defines a complete metric for probability measures on $H$ that integrates $V.$ Thus there exists a probability measure $\mu_\infty$ such that $\widetilde{d}(\mathcal{P}_{nt_*}\mu,\mu_\infty)$ converges to $0$ as $n\to\infty$. One readily checks that $\widetilde{d}(\mathcal{P}_{t_*}\mu_\infty,\mu_\infty)=0$, so that $\mu_\infty$ is invariant for $\mathcal{P}_{t_*}$.
 
 Finally, define 
 $$\mu_*(A)=\frac{1}{t_*}\int_0^{t_*}(\mathcal{P}_s\mu_\infty)(A)ds$$ for each measurable $A\subset H$. One sees that $\mathcal{P}_t\mu_*=\mu_*$ for any $t>0$.
\end{proof}

\subsubsection{Convergence rate: finishing the proof of Theorem \ref{harristheorem}}  As the idea of proof is similar to Theorem 2.4 of \cite{butkovsky2014subgeometric} and Theorem 2.3 of\cite{kulik2020well}, we only give a sketch of its main ideas.

\begin{proof}
First consider the discrete-time Markov chain $(X_{t_0n})_{n\in\mathbb{N}_+}$. Set $t=t_0$ in Theorem \ref{contractionmain}, we can fix a choice of the distance $d=d_{N,\gamma}$, $N\in\mathbb{N}_+$, $\gamma\in(0,1)$ such that the sub-level set $V^{-1}([0,4C_V])$ is $d$-small. 
We will check that we can apply Theorem 4.5.2 of \cite{kulik2017ergodic}, using the same step as in \cite{kulik2020well}, Section 5.3. Write $K_{V,\ell}:=\{x:V(x)\leq\ell\}$. First, the choice $d=d_{N,\gamma}$ and $B=K_{V,\ell}\times K_{V,\ell}$ justifies condition I of \cite{kulik2017ergodic}, Theorem 4.5.2. Second, the recurrence condition R (i),(ii) in \cite{kulik2017ergodic}, Theorem 4.5.2. holds true with $W(x,y)=V(x)+V(y)$ and $\lambda(t)=\exp(ct)$ (this is a routine but a bit technical application of Lyapunov property, see \cite{kulik2020well}, Section 5.3 for how this step is checked). 
We take $p=\epsilon^{-1}$ and $q=(1-\epsilon)^{-1}$, and define $d_{M,\gamma,p}(x,y)=d_{N,\gamma}(x,y)^{1/p}.$ Then the additional assumption (4.5.8) in \cite{kulik2017ergodic}, Theorem 4.5.2 is verified since $d_{N,\gamma,p}$ is bounded by 1. Applying \cite{kulik2017ergodic}, Theorem 4.5.2
we deduce the existence of an invariant measure $\pi$ and that, for some $c,C>0$,
\begin{equation}\label{geometricmain}d_{N,\gamma,p}(\operatorname{Law}(X^x_{nt_0}),\pi)\leq \frac{C}{(e^{cnt_0})^{1-\epsilon}}(V(x))^{1-\epsilon},\quad n\in\mathbb{N}_+.\end{equation}

Moreover, using \eqref{longtimespider} and that the distance $d_{N,\gamma}$ is upper bounded by 1, we further conclude that for all $x,y\in H$, 
\begin{equation}\label{contractsmall}
d_{N,\gamma}(\operatorname{Law}(X_t^x,X_t^y))\leq  4 d_{N,\gamma}(x,y),\quad 0\leq t\leq t_0.\end{equation}

Combining the Markov property, \eqref{geometricmain} and \eqref{contractsmall}, using also
$$d_{N,\gamma}(x,y)\leq d_{N,\gamma,p}(x,y),$$
we deduce that for some $\xi>0$,
$$d_{N,\gamma}(\operatorname{Law}(X_t^x),\pi)\leq \frac{C}{({e^{\xi t}})^{1-\epsilon}} (V(x))^{1-\epsilon},\quad \text{for all }x\in H,t\geq 0.$$

We complete the proof noting that $$d_{\gamma'}(x,y)\leq d_{\gamma}(x,y)\leq d_{N,\gamma}(x,y)$$
for any $0<\gamma\leq\gamma'\leq 1$ and $N\in\mathbb{N}_+.$
\end{proof}

\subsection{Proof of Theorem \ref{burgerstheorem2}}
We now prove Theorem \ref{burgerstheorem2}. The argument is the same as the previous one, and we only outline where changes are needed when deriving various estimates.
 Arguing as in \eqref{finalconclusion}, we deduce similarly that
\begin{equation}\label{burgersfinalconclusion}\inf_{|x|\leq D}\mathbb{P}(|X_t^x|\leq\delta)\geq\frac{1}{L(D,\delta)}.\end{equation}
Then use an independent coupling, we deduce as in \eqref{largecontraction} that 
\begin{equation}\label{burgerslargecontraction}
\begin{aligned}
    d_{N,\gamma}(\operatorname{Law}(X_t^x),\operatorname{Law}(X_t^y))&\leq d_{N,\gamma}(X_t^x,X_t^y)\\&\leq 1-\frac{1}{2}\left(\inf_{|z|\leq D}\mathbb{P}(|X_t^z|\leq\frac{1}{2^{1+1/\gamma}N^{1/\gamma}})\right)^2<1.\end{aligned}
\end{equation}
for any $(x,y)\in H\times H$ with $|x|\leq D$ and $|y|\leq D$.

For the generalized coupling, fix two initial values $x,y\in H$. Consider the processes
\begin{equation}\label{burgerseqx}dX_t=AX_t dt+b(X_t)dt+(-A)^{1/2}F(X_t)dt+\sigma(X_t)dW_t,\quad X_0=x,\end{equation}
\begin{equation}\label{burgerseqy}dY_t=AY_t dt+b(Y_t)dt+(-A)^{1/2}F(Y_t)dt+\sigma(Y_t)dW_t,\quad Y_0=y,\end{equation}
where we set $$\tau=\tau_{x,y}:=\inf\{t\geq 0:|X_t-\widetilde{Y}_t|\geq 2|x-y|\},$$
and set $\lambda:=|x-y|^{\gamma -1}$ for some $\gamma\in(0,1)$ to be determined later. 
Consider also
\begin{equation}\label{burgerscontrolprocess}
d\widetilde{Y}_t=A\widetilde{Y}_t dt+b(\widetilde{Y}_t)dt+(-A)^{1/2}F(\widetilde{Y}_t)dt+\lambda(X_t-\widetilde{Y}_t)dt1_{t\leq\tau}+\sigma(\widetilde{Y}_t)dW_t,\quad \widetilde{Y}_0=y.\end{equation}

Since weak uniqueness has been proved, the law of $\widetilde{Y}_t$ does not depend on the stopping time $\tau$ whenever $t\leq\tau$, because the law of $\widetilde{Y}_t$ arises as the limit of SPDEs with Lipschitz coefficients, whose distribution is fixed irrespective of the stopping time $\tau$.

Then arguing as in \eqref{controlgirsanov}, we deduce that for any $T>0$ we can find $C$ such that 
\begin{equation}
    \label{burgerscontrolgirsanov}
d_{TV}(\operatorname{Law}(Y|_{[0,T]}),\operatorname{Law}(\widetilde{Y}|_{[0,T]}))\leq CT^{1/2} |x-y|^\gamma.\end{equation}

Then arguing through the mild formulation of $X_t$ and $\widetilde{Y}_t$ as in \eqref{difference}, 
using the Hölder continuity $\mathbf{H}_2$, $\mathbf{H}_3$ and $\mathbf{H}_6$ of $b$, $\sigma$ and $F$, the definition of the stopping time $\tau$ and Theorem \ref{stochasticinte}, we obtain as in \eqref{fififi} the following deviation estimate: for each sufficiently large $m\in\mathbb{N}_+$ and $\eta\in(0,\frac{1}{2})$, we may find some fixed constant $M=M_{m,\eta}>0$ such that for any $R>0$,
\begin{equation}\label{burgersfififi}
    \mathbb{P}(\sup_{0\leq t\leq \tau\wedge T}\{|X_t-\widetilde{Y}_t|- e^{-\lambda t}|x-y|\}\geq  M(\lambda^{-1}|x-y|^{\alpha}+\lambda^{-\frac{1}{2}}|x-y|^\zeta+\lambda^{-\frac{1}{2}\eta}|x-y|^{\beta} R))\leq \frac{M}{R^m},
\end{equation}
where we also used the auxiliary estimate \eqref{burgersusefulestimate}. Recall our choice $\lambda=|x-y|^{\gamma -1}$. We now choose $\gamma>0$ sufficiently small such that \eqref{burgerscondition2} is satisfied. 

Then we find some  $0<\chi<\min(\alpha-\gamma,\beta+\frac{1-\gamma}{2}\eta-1,\zeta+\frac{1-\gamma}{2}-1)$ and set furthermore $$R=|x-y|^{-\chi}.$$ 
Arguing as in \eqref{1.111} and \eqref{deviationestimate}, we conclude that 
\begin{equation} \label{burgersdeviationestimate}
    \mathbb{P}(|X_T-\widetilde{Y}_T|\geq \frac{1}{2}|x-y|)\leq M |x-y|^{2\gamma}\wedge 1.
\end{equation}
Having worked out this short-time estimate, we can construct the distance $d_{N,\gamma}$ as desired. Arguing as in \eqref{goodgifts}, we obtain as in \eqref{localcontraction} that for a choice of $N_0$ and $\gamma$ depending on $t$,
\begin{equation}\label{burgerslocalcontraction}d_{N,\gamma}\left(\operatorname{Law}(X_t^x),\operatorname{Law}(X_t^y)\right)
\leq \theta_1 d_{N,\gamma}(x,y),\quad N\geq N_0,\quad d_{N,\gamma}(x,y)<1.\end{equation}
This completes the proof of Theorem \ref{contractionmain} in the setting of a Burgers type non-linearity, and the whole proof of Theorem \ref{burgerstheorem2} now finishes.

\section{Banach space example: reaction diffusion equations}
\label{section4}
In this section we follow some analytic arguments in \cite{cerrai2003stochastic}. For any $p>0$ denote by $|\cdot|_p$ the norm of $L^p([0,1];\mathbb{R}^r).$ For $\epsilon>0$ define a norm $|\cdot|_{\epsilon,p}$ and the Sobolev space $W^{\epsilon,p}([0,1];\mathbb{R}^r)$ via
$$
|x|_{\epsilon,p}:=|x|_p+\sum_{i=1}^r\int_{[0,1]^2}\frac{|x_i(\xi)-x_i(\eta)|^p}{|\xi-\eta|^{\epsilon p+1}}d\xi d\eta.
$$
We have the following estimates:\begin{equation}\label{4.1123}
    |\mathcal{S}(t)x|_{\epsilon,p}\leq C(t\wedge 1)^{-\frac{\epsilon}{2}}|x|_p,\quad x\in L^p([0,1];\mathbb{R}^r).
\end{equation}

Throughout this section, $H_0$ denotes the Banach space $\mathcal{C}([0,1];\mathbb{R}^r)$, which is the $r$-fold direct sum of $\mathcal{C}([0,1];\mathbb{R})$ with the norm
$$\left|x\right|_{H_0}:=\left(\sum_{i=1}^r \|x_i\|_{\mathcal{C}([0,1];\mathbb{R})}^2\right)^{\frac{1}{2}}.
$$

We prove the following analogue of Theorem \ref{stochasticinte}, where we replace the Hilbert space $H$ by the Banach subspace $H_0$. This reduction is crucial for handling interaction term of polynomial growth. The $\lambda=0$ case of the following theorem is covered in \cite{cerrai2003stochastic}, Theorem 4.2.

\begin{theorem}\label{4.1fuc}
    Fix any $\eta\in(0,\frac{1}{2})$ and recall the notion 
    $$
\Gamma(t)=\int_0^t \mathcal{S}(t-s)e^{-\lambda(t-s)}\Phi(s)dW_s.
    $$ Consider any $H_0$-valued adapted process $\Phi$.
    Then there exists a sufficiently large constant $p_*$ such that for any $p>p_*$, we have
    \begin{equation}
     \mathbb{E}\sup_{0\leq t\leq T}  |\Gamma(t)|_{H_0}^p\leq c_{\eta,p,T}\lambda^{-\frac{\eta p}{2}}\mathbb{E}\sup_{0\leq t\leq T}|\Phi(t)|_{H_0}^p,
    \end{equation}
    where $c_{\eta,p,T}$ is a positive constant  continuous and growing in $T$ and satisfies $c_{\eta,p,0}=0$.

    We can also take $\lambda=0$ in the definition of $\Gamma(t)$ and obtain the estimate
     \begin{equation}
     \mathbb{E}\sup_{0\leq t\leq T}  |\Gamma(t)|_{H_0}^p\leq c_{\eta,p,T}\mathbb{E}\sup_{0\leq t\leq T}|\Phi(t)|_{H_0}^p.
    \end{equation}
    
\end{theorem}
(Note: since the interval $(0,1)$ is compact, any $H_0$-valued process $\Phi$ can be identified as a $\mathcal{L}(H_0,H)$-valued process $\widetilde{\Phi}$ defined by $u\in H_0\mapsto \Phi\cdot u\in H$ in such a way that for some universal constant $C>0$, $|\widetilde{\Phi}|_{\mathcal{L}(H_0,H)}\leq C|\Phi|_{H_0}$.)
\begin{proof}

We will be sketchy in the proof as the proof is a combination of Theorem \ref{stochasticinte} and\cite{cerrai2003stochastic}, Theorem 4.2. By the Sobolev estimate \eqref{4.1123} and the stochastic factorization lemma \eqref{2.2maim2.2}, we deduce that for $\alpha>1/p$ and $\epsilon<2(\alpha-1/p)$, using Hölder's inequality:
\begin{equation}\begin{aligned}
     \mathbb{E}\sup_{0\leq t\leq T}  |\Gamma(t)|_{\epsilon,p}^p&\leq C_\alpha \int_0^T ((T-r)\wedge 1)^{\alpha-\frac{\epsilon}{2}-1}|Z(r)|_p dr\\&\leq 
     C_\alpha (\int_0^{T}(r\wedge 1)^{\frac{p}{p-1}(\alpha-2\epsilon-1)}dr)^{\frac{p-1}{p}} \mathbb{E}\sup_{0\leq t\leq T}|Z(t)|_{L^p([0,T]\times [0,1],\mathbb{R}^r)}^p,
\end{aligned}\end{equation}
where 
\begin{equation}
    Z(t)=\int_0^t(t-s)^{-\alpha}\mathcal{S}^\lambda(t-s)\Phi(s)dW_s.
\end{equation}
For any sufficiently small $\alpha_*$ we can find $p_*$ such that when $p\geq p_*$ we have $ \alpha_*>\frac{3}{2p}<1.$ This is useful for Sobolev embedding: when $\epsilon>1/p$ we have $\Gamma(t)\in \mathcal{C}([0,T];\mathbb{R}^r).$
    Now we bound $Z(t)_{L^p([0,T]\times [0,1];\mathbb{R})}$. In this case we have, by BDG inequality, that for any $\xi\in(0,1)$,
    $$\begin{aligned}
\mathbb{E}|Z(r)(\xi)|^p&\leq C\mathbb{E}\left(\int_0^r (r-r')^{-2\alpha_*}\sum_{k=1}^\infty |\mathcal{S}^\lambda(r-r')\Phi(r')e_k(\xi)|^2 dr'\right)^{\frac{p}{2}}
\\&\leq C\mathbb{E}\left(\int_0^r (r-r')^{-2\alpha_*}e^{-2\lambda(r-r')}\sum_{k=1}^\infty |\mathcal{S}(r-r')\Phi(r')e_k(\xi)|^2 dr'\right)^{\frac{p}{2}},\\&
\leq C\mathbb{E}\left(\int_0^r (r-r')^{-(2\alpha_*+\frac{1}{2})}e^{-2\lambda(r-r')}\sup_{s\leq r'}|\Phi(s)|_{H_0}dr'\right)^{\frac{p}{2}}
    \end{aligned}$$
where for the last inequality we have skipped many computations: these computations correspond to intermediate steps in the proof of \cite{cerrai2003stochastic}, Theorem 4.2, and we take $\zeta=1$ and $d=1$ in that proof. Applying the elementary inequality
$$
e^{-2\lambda(r-r')}\leq \frac{C_\tau}{\lambda^\tau(r-r')^\tau}
$$
and choosing $\tau$ sufficiently small such that $2\alpha_*+\frac{1}{2}+\tau<1$ so the integral is finite, we get the desired $\lambda^{-\frac{\eta p}{2}}$ leading term for any $\eta\in(0,\frac{1}{2})$. The rest of the proof shall be completed following the steps in \cite{cerrai2003stochastic}, Theorem 4.2. The continuity in $T$ of $c_{\eta,p,T}$ and that $c_{\eta,p,0}=0$ can be readily read off from the computations. The case $\lambda=0$ can be found in in \cite{cerrai2003stochastic}, Theorem 4.2 and is simpler to prove.
    
\end{proof}

We will also use the following estimate from \cite{cerrai2003stochastic}, Lemma 5.4.

\begin{lemma}\label{1028} Under the assumptions of Theorem \ref{theorem1.8}, assuming moreover that $f_i$ and $g_i$ are globally Lipschitz continuous, then for any $T>0$ and $p\geq 1$ we have the estimate
    \begin{equation}
        \mathbb{E}[\sup_{o\leq t\leq T}|X_t|_{H_0}^p]\leq C(p,T)(1+|x|_{H_0})^p,
    \end{equation}
    where the constant $C(p,T)$ depends on the constant $C_5$ in the dissipative estimate, $C_4$ in the polynomial growth estimate and the exponent $p$, but is independent of the Hölder or Lipschitz regularity coefficient of $f_i$ and $g_i$.
\end{lemma}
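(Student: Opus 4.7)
The plan is to apply the standard splitting $X_t = Y_t + Z_t$, where
\begin{equation*}
Z_t := \int_0^t \mathcal{S}(t-s)\, G(s, X_s) \, dW_s
\end{equation*}
is the stochastic convolution and $Y_t := X_t - Z_t$ satisfies the random evolution equation
\begin{equation*}
\partial_t Y_t = A Y_t + F(t, Y_t + Z_t), \qquad Y_0 = x,
\end{equation*}
with $F$ having components $f_i = h_i + k_i$. Bounding $\mathbb{E}\sup_{t \le T}|X_t|_{H_0}^p$ then reduces to bounding $Z$ and $Y$ separately.

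First I would control $Z_t$ in $H_0$. Since $|g_i| \le C_2$ uniformly by assumption (1), $G(s,X_s)$ has $\mathcal{L}(H_0,H)$-norm bounded by a multiple of $C_2$, and Theorem \ref{4.1fuc} applied with $\lambda = 0$ gives, for every $p \ge 1$,
\begin{equation*}
\mathbb{E}\sup_{0 \le t \le T} |Z_t|_{H_0}^p \le C(T,p),
\end{equation*}
the constant depending only on $C_2$ and $T$, in particular not on the Hölder or Lipschitz regularity of $g_i$.

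Next I would derive a pathwise bound for $|Y_t|_{H_0}$ via the $L^{2q}$ test-function argument of \cite{cerrai2003stochastic}. Testing the $i$-th component equation against $|Y_t^i|^{2q-2} Y_t^i$ and integrating in $\xi \in (0,1)$, the Laplacian contributes $-(2q-1)\int (\partial_\xi Y_t^i)^2 |Y_t^i|^{2q-2}\,d\xi \le 0$ after integration by parts with the Dirichlet boundary conditions. The drift term splits using $f_i = h_i + k_i$: the $h_i$-part is controlled by the linear growth (5) together with Young's inequality, producing a contribution bounded by $C(|Y_t|_{2q}^{2q} + |Z_t|_{2q}^{2q} + 1)$. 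For the $k_i$-part I would invoke the dissipative decomposition
\begin{equation*}
k_i(t,\xi,Y_t^i + Z_t^i) = k_i(t,\xi,Z_t^i) + \lambda_i(t,\xi,Y_t + Z_t,Z_t)\, Y_t^i
\end{equation*}
from (7). Since $\lambda_i \le C_5$, the $\lambda_i Y_t^i$ term contributes at most $C_5 |Y_t^i|_{2q}^{2q}$ once integrated against $|Y_t^i|^{2q-2} Y_t^i$, while the remaining $k_i(t,\xi,Z_t^i)$ is pointwise dominated by $C_4(1 + |Z_t|^m)$ by (6) and is absorbed into a $Z$-driven error via Young's inequality. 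The polynomial growth of $k_i$ is thereby converted into a bound linear in $|Y_t^i|_{2q}^{2q}$. A Gronwall argument, with constants in the exponential independent of $q$, yields
\begin{equation*}
\sup_{0 \le t \le T} |Y_t|_{2q} \le C(T)\Big(|x|_{2q} + 1 + \sup_{0 \le t \le T}|Z_t|_{2qm}^{m}\Big);
\end{equation*}
sending $q \to \infty$ gives the analogous bound in the $H_0$-norm in terms of $\sup_t |Z_t|_{H_0}^m$. Raising to the $p$-th power, taking expectations and combining with the $Z$-estimate (applied with exponent $mp$) completes the proof.

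The main technical obstacle is to justify the test-function manipulation rigorously when $Y_t$ is only continuous in $\xi$. This is handled by mollifying the equation in space, deriving the inequality for the mollified solution where the integration by parts is elementary, and passing to the limit using the uniform bounds; this is exactly the route taken in \cite{cerrai2003stochastic}. The key structural observation is that the combination of the dissipativity (7) with the linear growth (5) makes the Gronwall constant depend only on $C_3, C_4, C_5$ and $T$, so that the final estimate is genuinely independent of the local Lipschitz modulus of $f_i$ or $g_i$, as claimed.
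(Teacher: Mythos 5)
Your proposal is correct and reconstructs precisely the argument the paper defers to: the paper omits the proof entirely and cites \cite{cerrai2003stochastic}, Lemma 5.4, whose proof is exactly your splitting $X=Y+Z$, the sup-norm bound on the stochastic convolution, and the $L^{2q}$ test-function/dissipativity/Gronwall estimate for $Y$ followed by $q\to\infty$. One small slip worth fixing: invoking Theorem \ref{4.1fuc} ``with $\lambda=0$'' is vacuous as literally stated, since its constant is $\lambda^{-\eta p/2}$; the $\lambda=0$ estimate you need is \cite{cerrai2003stochastic}, Theorem 4.2, which the paper itself identifies as covering that case.
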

Our assumptions in Theorem \ref{theorem1.8}, in particular (6) and (7), guarantee that the map $F$ defined after \cite{cerrai2003stochastic}, hypothesis 4 satisfies the conditions outlined in (5.3), (5.4) and (5.5) of \cite{cerrai2003stochastic} so we should have a very similar estimate as in \cite{cerrai2003stochastic}, Lemma 5.4. However, as  \cite{cerrai2003stochastic}, Lemma 5.4 is stated on a different set of assumptions, we will prove Lemma \ref{1028} again under our assumption (namely, without assuming local Lipschitz continuity of the coefficients). The proof of Lemma \ref{1028} is given in Appendix \ref{appendixd}.

As a corollary of this estimate, we deduce that any solution to the SPDE will not blow up in finite time. For the given $T>0,$ we have $\mathbb{P}(\tau_\infty\leq T)=0$, where $\tau_\infty$ is the blowup time
    $$\tau_\infty:=\inf\{t\geq 0:\sup_{\xi\in[0,1],i\in[r]}|u_i(t,\xi)|=\infty\}.$$ Moreover, for any initial value $x\in H_0$, denote by $\tau_\infty^x$ the blowup time of the solution with initial value $x$, then we assume that for any $M>0$, we have uniformly
$$
\mathbb{P}(\inf_{|x|_{H_0}\leq M}\tau_\infty^x\leq T)=0.
$$

We are finally in the position to prove Theorem \ref{theorem1.8}.

\begin{proof}[\proofname\  of Theorem \ref{theorem1.8}] We start with the proof of weak uniqueness. Assume that we are given a weak mild solution $X_t$ on some probability space $(\Omega,\mathcal{F}_t,\mathbb{P})$. We find Lipschitz functions $(f_1^n,\cdots,f^n_r)$ and $(g_1^n,\cdots,g_r^n)$ such that we have the following uniform convergence on compact subsets of $\mathbb{R}$:
\begin{equation}\label{4.555}
    \sup_{t>0,\xi\in(0,1),i\in[r],|\sigma|\leq n} |f_i^n(t,\xi,\sigma)-f_i(t,\xi,\sigma)|+|g_i^n(t,\xi,\sigma)-g_i(t,\xi,\sigma)|:=\Delta_n\to 0,\quad n\to\infty.
\end{equation}
This Lipschitz approximation is easy to construct as we are dealing with real-valued functions on $\mathbb{R}^d$, so that convolution by a smooth mollifier suffices. Moreover, this approximation preserves the property that $g_i^n$ are uniformly non-degenerate, and that $f_i^n,g_i^n$ can have the same Hölder continuity coefficient as $f_i$ and $g_i$ (i.e. uniform in $n$ constants) for $\sigma\in\mathbb{R}^r:|\sigma|\leq n$ and for each $n$.

Then the SPDE with coefficients $f_i^n,g_i^n$ has a unique strong solution on this probability space. Denote this SPDE by 
\begin{equation}
    dX_t^n = AX_t^ndt+F^n(t,X_t^n)dt+G^n(t,X_t^n)dW_t, X_0^n=x.
\end{equation}
where $F^n$, $G^n$ are the abstract and compact notations for the coefficients $f_i^n,g_i^n.$ As before consider also the following auxiliary SPDE
\begin{equation}
    d\widetilde{X}_t^n = A\widetilde{X}_t^ndt+F^n(t,\widetilde{X}_t^n)dt+\lambda(X_t-\widetilde{X}_t^n)dt+G^n(t,\widetilde{X}_t^n)dW_t, X_0^n=x.
\end{equation}
for some $\lambda>0$ to be determined.
    Now we define two sequences of stopping times: (note the norm is the norm on the Banach space $H_0$)
    \begin{equation}
        \tau_n^1:=\inf\{t\geq 0: |X_t-\widetilde{X}_t^n|_{H_0}\geq 2\Delta_n\}, 
   \quad 
        \tau_m^2:=\inf\{t\geq 0: |X_t|_{H_0}|\geq m\}.
    \end{equation} By definition of blowup and the norm of $H_0$, one sees that \begin{equation}\label{upfrontupfront}\lim_{m\to\infty}\mathbb{P}(\tau_m^2\leq T)=0.\end{equation} Now we set $$\tau_{n,m}:=T\wedge\tau_n^1\wedge\tau_m^2.$$
    \textbf{Step 1: Linear contraction estimate.}
Before the stopping time $\tau_m^2\wedge T$, the process $X_t$ stays in a compact subset where the non-linearity $F$ has a finite Hölder constant $M_m$. The non-degeneracy condition on noise is also verified by assumption. By assumption, $F$ and $F^n$ are both locally Hölder continuous with continuity constant $M_n$, so one can bound
$$
    |F(t,X_t)-F^n(t,\widetilde{X}_t^n)|\leq M_m(2\Delta_n)^{\alpha}+\Delta_n,\quad t\leq \tau_{n,m}.
$$ 
Now we work as in the proof of Theorem \ref{theorem1.1}, using Theorem \ref{4.1fuc} instead of Theorem \ref{relabel}.
 Applying Theorem \ref{4.1fuc}, choosing $\lambda=(\Delta_K^n)^{\gamma -1}$, we obtain that for each $\eta\in(0,\eta_0)$ and $m'\in\mathbb{N}_+$, we may find a constant $C_m=C_{m,\eta,m',T}$ such that
\begin{equation}\label{cdddddomplementary}\mathbb{P}(\sup_{t\in[0,\tau_{n,m}]}|X_t-\widetilde{X}_t^n|_{H_0}>C_m (\Delta_n)^{1+\alpha-\gamma}+C_m(\Delta_n)^{\frac{1-\gamma}{2}\eta+\beta}R)\leq \frac{C_m}{R^{m'}},\quad R>0,\end{equation}
(We stress that the constant $C_m$ depends on $m$ because $f_i$ is only locally Hölder continuous, so that the Hölder constant depends on how large the solution grows, thus depends on $\tau_m^2$. In the following proof we will first send $n$ to infinity, and then send $m$
 to infinity), so that by choosing $R=(\Delta_n)^{-\chi}$ (with the choice of $\gamma,\eta,\chi\in(0,\frac{1-\gamma}{2}\eta+\beta-1]$ as in \eqref{condition2})  we can guarantee that for some $\chi_1',\chi_2'>0$, we have
\begin{equation}\label{}\mathbb{P}(\sup_{t\in[0,\tau_{n,m}]}|X_t-\widetilde{X}_t^n|_{H_0}>C_m (\Delta_n)^{1+\chi_1'})\leq C_m(\Delta_n)^{\chi_2'},\end{equation}
so that, given that $\Delta_n$ is sufficiently small for $n$ large, on the complement of the event stated in \eqref{cdddddomplementary}, one must have $\tau_{n,m}=T\wedge \tau_m^2,$ that is, 
\begin{equation}\label{111311131113}\mathbb{P}(\sup_{t\in[0,T\wedge \tau_m^2]}|X_t-\widetilde{X}_t^n|_{H_0}>C_m (\Delta_n)^{1+\chi_1'})\leq C_m (\Delta_n)^{\chi_2'}.\end{equation}
Since $H_0$ embeds continuously into $H$, this immediately implies a bound on $$\sup_{t\in[0,T\wedge\tau_m^2]}|X_t-\widetilde{X}_t^n|_H.$$

The above reasoning also suggests that for each $m$,
\begin{equation}\label{steagaagagrqegqgq}\mathbb{P}(\tau_n^1\geq T\wedge\tau_m^2)\to 0,\quad n\to\infty.\end{equation}

\textbf{Step 2: Estimates via Girsanov transform.} Consider an additional auxiliary SPDE defined via
\begin{equation}
    d\widetilde{\underline{X}}_t^n = A\widetilde{\underline{X}}_t^ndt+F^n(t,\widetilde{\underline{X}}_t^n)dt+\lambda(X_t-\widetilde{\underline{X}}_t^n)1_{t\leq \tau_{n,m}}dt+G^n(t,\widetilde{\underline{X}}_t^n)dW_t, \widetilde{\underline{X}}_0^n=x.
\end{equation} The solution to this SPDE is unique in the strong sense as the coefficients are all Lipschitz continuous.

Now we apply Girsanov transform, and by continuous embedding of $H_0$ into $H$ once more, we see that 
    \begin{equation}\label{variationiddddneq}
d_{TV}(\operatorname{Law}(X^n|_{[0,T]}),\operatorname{Law}(\widetilde{\underline{X}}^n|_{[0,T]}))\leq C T^{1/2}(\Delta_n)^\gamma.\end{equation}

\textbf{Step 3: Concluding proof of weak uniqueness.} Following the remaining steps in the proof of Theorem \ref{theorem1.1} one can show that for any real-valued bounded continuous function $E$ on $\mathcal{C}([0,T];H)$, we have
$$\lim\sup_{n\to\infty}
\left|\mathbb{E}[E(X|_{[0,T]}])-\mathbb{E}[E(X^n|_{[0,T]})]\right|\leq C\sup_x |E(x)|\mathbb{P}(\tau_{m}^2\leq T)\to 0,\quad m\to\infty,
$$
completes the proof of weak uniqueness.

More precisely, we need to combine the following three individual estimates: bounding $$\left|\mathbb{E}[E(X|_{[0,T]}])-\mathbb{E}[E(\widetilde{X}^n|_{[0,T]})]\right|$$ via \eqref{111311131113} and \eqref{upfrontupfront}; bounding  
$$\left|\mathbb{E}[E(X^n|_{[0,T]}])-\mathbb{E}[E(\widetilde{\underline{X}}^n|_{[0,T]})]\right|$$ via \eqref{variationiddddneq}; and finally bounding $$\left|\mathbb{E}[E(\widetilde{X}^n|_{[0,T]}])-\mathbb{E}[E(\widetilde{\underline{X}}^n|_{[0,T]})]\right|$$ via \eqref{steagaagagrqegqgq}.

\textbf{Step 4: Weak existence.}
To prove weak existence, one shall consider solutions $X_t^n,X_t^m$ driven by Lipschitz coefficients $F^n,F^m$ approximating $F$ in the above sense. By Lemma \ref{1028} and another estimate on the time increment of $X^n(t)$, applying Arzela-Ascoli, it is not hard to check the sequence $(X^n|_{[0,T]})_{n\geq 1}$ is tight on $\mathcal{C}([0,T];H_0)$. By Skorokhod embedding theorem and continuity of coefficients, taking a further limit leads to a weak mild solution to the SPDE with coefficients $F,G$. (For another approach, one may apply the coupling technique as in the previous steps to show that the law of $(X^n|_{[0,T]})_{n\geq 1}$ forms a converging sequence in $\mathcal{C}([0,T];H)$, and we upgrade this convergence to converging in $\mathcal{C}([0,T];H_0)$: this is the case because the contraction estimate is measured in $H_0$, giving a distance estimate of $\xi_1,\xi_2$ in $H_0$; and the total variation estimate can be turned into a coupling of two variables $\xi_2,\xi_3$ with high probability they are identical--see the next few paragraphs of this proof for what $\xi_1,\xi_2,\xi_3$ means. Thus now we have a coupling $\xi_1,\xi_3$ with a measurement of closeness using only the distance $H_0$ instead of $H$. As a result we have the sequence converging in law in $\mathcal{C}([0,T];H_0)$).

\textbf{Step 5: Continuity estimates.}
Now we prove the continuity estimate, which is similar to the procedure in Section \ref{sec3.145}. Having proved the solutions are unique in law, we skip the approximation procedures in the following argument and simply work with the equations themselves.

Fix two initial values $x,y\in H$. To couple the two processes
\begin{equation}\label{eqx123}dX_t=AX_t dt+F(t,X_t)dt+G(t,X_t)dW_t,\quad X_0=x\end{equation}
and 
\begin{equation}\label{eqy123}dY_t=AY_t dt+F(t,Y_t)dt+G(t,Y_t)dW_t,\quad Y_0=y,\end{equation}
we introduce a constant $\lambda>0$ and a stopping time $\tau$ whose value will be determined later, and we consider an auxiliary control process
\begin{equation}\label{controlprocegdss}
d\widetilde{Y}_t=A\widetilde{Y}_t dt+F(t,\widetilde{Y}_t)dt+\lambda(X_t-\widetilde{Y}_t)dt1_{t\leq\tau}+G(t,\widetilde{Y}_t)dW_t,\quad \widetilde{Y}_0=y.\end{equation}
Note that since the weak uniqueness of $\widetilde{Y}_t$ is proven, the solution $\widetilde{Y}_t$ does not depend on the stopping time $\tau$ whenever $t\leq\tau$.

We set $$\tau_{x,y}:=\inf\{t\geq 0:|X_t-\widetilde{Y}_t|_{H_0}\geq 2|x-y|_{H_0}\},$$
and set 
$\tau=\tau_{x,y}\wedge T\wedge \tau_m^2$.
Then we set $\lambda:=|x-y|_{H_0}^{\gamma -1}$ for some $\gamma\in(0,1)$ to be determined later. 

It follows from Proposition \ref{proposition2.3}, continuous embedding and the definition of $\tau$ that for any $T>0$ we can find $C$ such that 
\begin{equation}
    \label{controlgirsanovnew}
d_{TV}(\operatorname{Law}(Y|_{[0,T]}),\operatorname{Law}(\widetilde{Y}|_{[0,T]}))\leq CT^{1/2} |x-y|_{H_0}^\gamma.\end{equation}

Writing \eqref{eqx} and \eqref{eqy} in mild formulations, 
using the Hölder continuity of $F$ and $G$ we obtain the following deviation estimate thanks to Theorem \ref{4.1fuc}: for each $\eta\in(0,\frac{1}{2})$, we have that for each sufficiently large $m'\in\mathbb{N}_+$, we may find some fixed constant $M=M_{m',\eta}>0$ such that 
\begin{equation}\label{fifdddifi}
    \mathbb{P}(\sup_{0\leq t\leq \tau\wedge T}(|X_t-\widetilde{Y}_t|_{H_0}- e^{-\lambda t}|x-y|_{H_0})\geq  M\lambda^{-1}|x-y|_{H_0}^{\alpha}+M\lambda^{-\frac{1}{2}\eta}|x-y|_{H_0}^{\beta} R)\leq \frac{M}{R^{m'}},\quad R>0.
\end{equation}
By a suitable choice of $\lambda=|x-y|_{H_0}^{r-1}$, $R=|x-y|_{H_0}^{-\chi}$, where the constants $\gamma,\eta,\chi$ satisfy \eqref{730f}, \eqref{731f}, we finally obtain, for some $\chi_1'>0$, $\chi_2'>0$,
\begin{equation}\label{finalfinastste}
    \mathbb{P}\left(\sup_{0\leq t\leq\tau\wedge T} (|X_t-\widetilde{Y_t}|_{H_0}-e^{-|x-y|^{\gamma -1}t}|x-y|_{H_0})\geq C_m|x-y|_{H_0}^{1+\chi_1'}\right)\leq C_m |x-y|_{H_0}^{\chi_2'},
\end{equation} where the constant $C_m$ depends on $m$ again through the local Hölder constant of $f_i$. As all the exponents of $|x-y|$ in the left display are larger than one, by sample path continuity we deduce that on the complement of the event stated in the LHS of the last equation, one must have $\tau_{x,y}\geq T$, so that 
\begin{equation}\label{dgagagsag}
    \mathbb{P}\left(\sup_{0\leq t\leq\tau_m^2\wedge T} (|X_t-\widetilde{Y_t}|_{H_0}-e^{-|x-y|^{\gamma -1}t}|x-y|_{H_0})\geq C_m|x-y|_{H_0}^{1+\chi_1'}\right)\leq C_m |x-y|_{H_0}^{\chi_2'}.
\end{equation}

Define a distance $d$ on $H_0$ as follows:
$$d(x,y):=1\wedge |x-y|_{H_0},\quad x,y\in H.$$

Now we consider a coupling $(\xi_1,\xi_2)$ where $\xi_1$ has law $X_t$, $\xi_2$ has law $\widetilde{Y}_t$ such that the estimate \eqref{finalfinastste} is satisfied.
Consider another coupling $(\xi_2,\xi_3)$ where $\xi_2$ has law $\widetilde{Y}_t$ and $\xi_3$ has law $Y_t$, and such that by \eqref{controlgirsanovnew}, $$\mathbb{P}(\xi_2\neq \xi_3)\leq d_{TV}(\operatorname{Law}(Y|_{[0,T]});\operatorname{Law}(\widetilde{Y}|_{[0,T]}))\leq CT^{1/2}|x-y|_{H_0}^\gamma.$$
We may now form a coupling $(\xi_1,\xi_3)$ of $(X_t,Y_t),$ possibly on a different probability space, via the coupling $(\xi_1,\xi_2)$ and $(\xi_2,\xi_3)$ and obtain the following estimate (where we use $d$ to denote the Wasserstein distance defined by the distance function $d$, as in \eqref{wessagegag}):
\begin{equation}\label{whatgreatsup}\begin{aligned}
\mathbb{E}d(\xi_1,\xi_3)&\leq \mathbb{E} d(\xi_1,\xi_2)+\mathbb{P}(\xi_2\neq\xi_3)\\&\leq \mathbb{E}\left[d(\xi_1,\xi_2)1_{\{|\xi_1-\xi_2|_{H_0}\leq\frac{1}{2} |x-y|_{H_0}\}}1_{{\tau_m^2\geq T}\}}\right]
\\&+\mathbb{P}(|\xi_1-\xi_2|_{H_0}\geq \frac{1}{2}|x-y|_{H_0},\tau_m^2\geq T)\\&+\mathbb{P}(\xi_2\neq\xi_3)+\mathbb{P}(\tau_m^2\leq T).\end{aligned}
\end{equation} (When we bound the distance between $\xi_1$ and $\xi_2$, we separate the consideration in two cases: either $\tau_m^2\geq T$ and we apply \eqref{dgagagsag}, or $\tau_m^2\leq T$ and we use the trivial fact that the distance $d$ is upper bounded by one.)

The right hand side of \eqref{whatgreatsup} converges to $0$ as we set $|x-y|_{H_0}\to 0$ and set $m\to\infty.$ By definition of Wasserstein distance, we deduce that 
$$
d(\operatorname{Law}(X_t^x),\operatorname{Law}(X_t^y))\to 0,\quad |x-y|_{H_0}\to 0.
$$
This completes the proof of continuity in law with respect to the norm of $H_0$.
\end{proof}

\appendix
\section{Compactness of stochastic PDEs}\label{appendixA}
In the appendix we again assume that $A$ satisfies assumption $\mathbf{H}_1$.
The following proposition will be quite useful (see for example Proposition 8.4 of \cite{da2014stochastic}):

\begin{proposition}\label{prop1.3}
If $S(t),t>0$, are compact operators and $0<\frac{1}{p}<\eta\leq 1$, then the operator $G_\eta$
\begin{equation}\label{factorization}G_\eta f(t)=\int_0^t (t-s)^{\eta -1}S(t-s)f(s)ds,\quad f\in L^p((0,T);H),t\in[0,T]\end{equation}
is compact from $L^p((0,T);H)$ into $C([0,T];H).$
\end{proposition}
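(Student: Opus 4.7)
The plan is to verify the Arzelà–Ascoli criterion on $C([0,T];H)$: given any bounded set $\mathcal{K}\subset L^p((0,T);H)$, I would show that $G_\eta\mathcal{K}$ is uniformly bounded, equicontinuous, and pointwise relatively compact in $H$. The hypothesis $\eta>1/p$ is what makes the singular kernel $(t-s)^{\eta-1}$ integrable in the conjugate exponent $q=p/(p-1)$, which is the one analytic fact that drives all the estimates.

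First, for the uniform bound I would simply apply Hölder's inequality: since $(\eta-1)q>-1$, the function $s\mapsto (t-s)^{(\eta-1)q}$ is integrable on $(0,t)$, and using $\|S(t-s)\|_{\mathrm{op}}\leq M$ on the bounded interval $[0,T]$ gives
\[
|G_\eta f(t)|\;\leq\; M\left(\int_0^t (t-s)^{(\eta-1)q}\,ds\right)^{1/q}\|f\|_{L^p((0,T);H)},
\]
uniformly in $t\in[0,T]$. This also yields continuity of $G_\eta f$ into $H$ via dominated convergence, so that $G_\eta$ indeed maps into $C([0,T];H)$.

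The compactness step I would handle by the standard ``splitting at a small time $\epsilon$'' trick. Fix $\epsilon\in(0,t)$ and use the semigroup law $S(t-s)=S(\epsilon)\,S(t-s-\epsilon)$ for $s\in[0,t-\epsilon]$ to decompose
\[
G_\eta f(t)=S(\epsilon)\!\int_0^{t-\epsilon}(t-s)^{\eta-1}S(t-s-\epsilon)f(s)\,ds \;+\; \int_{t-\epsilon}^t (t-s)^{\eta-1}S(t-s)f(s)\,ds.
\]
The inner integral in the first term is bounded in $H$ uniformly over $f\in\mathcal{K}$ by the same Hölder estimate as above, hence applying the compact operator $S(\epsilon)$ sends this bounded set into a relatively compact subset of $H$. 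The second term can be made arbitrarily small in $H$-norm, uniformly in $f\in\mathcal{K}$ and in $t$, by choosing $\epsilon$ small, because another application of Hölder gives an upper bound proportional to $\epsilon^{\eta-1/p}\|f\|_{L^p}$ with a positive exponent. Thus $\{G_\eta f(t):f\in\mathcal{K}\}$ is totally bounded in $H$ for each $t$.

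Equicontinuity is the main technical obstacle, and I would handle it by the same splitting combined with strong continuity of the semigroup. For $0\leq s<t\leq T$, write
\[
G_\eta f(t)-G_\eta f(s)=\int_s^t(t-r)^{\eta-1}S(t-r)f(r)\,dr+\int_0^s\bigl[(t-r)^{\eta-1}S(t-r)-(s-r)^{\eta-1}S(s-r)\bigr]f(r)\,dr.
\]
The first piece is bounded by $C(t-s)^{\eta-1/p}\|f\|_{L^p}$ via Hölder. For the second, I would insert the factor $S(t-s)$ and use $S(t-r)=S(t-s)S(s-r)$ to rewrite it as $\bigl(S(t-s)-I\bigr)$ applied to a bounded-in-$H$ quantity, plus a kernel-difference term $\int_0^s \bigl((t-r)^{\eta-1}-(s-r)^{\eta-1}\bigr)S(t-r)f(r)\,dr$ that goes to zero as $t-s\to 0$ by Hölder, using absolute continuity of $r\mapsto (r)^{\eta-1}$ in $L^q$. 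The crux is that strong convergence $S(t-s)\to I$ becomes uniform over the relatively compact set produced in the previous paragraph, so this contribution vanishes uniformly in $f\in\mathcal{K}$ as $|t-s|\to 0$. Combining uniform boundedness, equicontinuity, and pointwise relative compactness, Arzelà–Ascoli yields that $G_\eta\mathcal{K}$ is relatively compact in $C([0,T];H)$, proving the proposition.
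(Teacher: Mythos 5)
Your proof is correct and follows essentially the same route as the paper's (which itself is a sketch of Proposition 8.4 in Da Prato--Zabczyk): pointwise relative compactness of $\{G_\eta f(t)\}$ via the splitting $S(t-s)=S(\epsilon)S(t-s-\epsilon)$ plus a small tail, equicontinuity uniform over the unit ball of $L^p$, and Arzel\`a--Ascoli, with $\eta>1/p$ entering only through the H\"older integrability of the kernel. The only point worth making explicit is that your equicontinuity step needs the set $\{G_\eta f(s): \|f\|_p\leq 1,\, s\in[0,T]\}$ (union over all times) to be relatively compact so that $S(t-s)\to I$ converges uniformly on it, but this follows from your pointwise argument since the total-boundedness bounds there are uniform in $t$.
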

The proof consists in verifying that for each $t\in[0,T]$, the set $\{G_\eta f(t):|f|_p\leq 1\}$ is relatively compact in $H$, and that $G_\eta f(t),t\in[0,T]$ is uniformly continuous in $t$ with respect to the operator norm, i.e. $|G_\eta f(t)-G_\eta f(s)|\leq \epsilon$ if $|f|_p\leq 1$, $|t-s|\leq\delta$. As the concept of a set being relatively compact is equivalent to it being totally bounded, we may first find a finite subset  $\mathfrak{A}\subset[0,T]$ and a relatively compact subset $K_\mathfrak{A}$ of $H$ such that the unit ball of $L^p$ maps to $K_\mathfrak{A}$ under $G_\eta f(t)$ for each $t\in\mathfrak{A}$. Then we use the continuity of $G_\eta f(t)$ in operator norm to find a relatively compact subset $K\subset H$ such that $G_\eta f(t)$ maps the unit ball of $L^p$ into $K$, for all $t\in[0,T]$. 

The operator $G_\eta$ is quite useful because of the following factorization formula:
for any adapted process $\phi\in L^p(\Omega\times[0,T];\mathcal{L}(H_0,H)),$
\begin{equation}
    \int_0^t S(t-s)\phi(s)dW_s=\frac{\sin\pi\eta}{\pi} G_\eta Y(t),
\end{equation}
where 
\begin{equation}
    Y(t)=\int_0^t (t-s)^{-\eta} S(t-s)\phi(s)dW_s.
\end{equation}

Before we proceed, we note that the solution to the SPDE \eqref{evolutionequation} can be represented as
\begin{equation}\label{expansionrepresent}X_t=S(t)x+G_1 b(t)+\frac{\sin\pi\eta}{\pi}G_\eta \sigma(t),\end{equation}
with $b(t)=b(t,X_t)$
and $\sigma(t)=\int_0^t (t-s)^{-\eta} S(t-s)\sigma(s,X_s)dW_s.$

Under the linear growth assumption $\mathbf{H_5}$ and the semigroup assumption $\mathbf{H}_1$, one can find a constant $C_T>0$ such that 
\begin{equation}\label{momentestimate}
\sup_{t\in[0,T]}\mathbb{E}[|X_t|^p]\leq C_T(1+|x|^p),\end{equation} where $X$ is a solution to \eqref{evolutionequation} and $p\geq 2$. The proof can be found for example in Theorem 7.5 of \cite{da2014stochastic}. More precisely, we need to find Lipschitz approximations of the coefficients $b$ and $\sigma$, but as the resulting estimate is independent of the Lipschitz constant, we simily drop that approximation.

Now we can use \eqref{momentestimate} to estimate $b(t)$ and $\sigma(t)$. Via Young's inequality and BDG inequality,
\begin{equation}\label{youngbdg}\int_0^T \mathbb{E}[ |\sigma(s)|^p] ds\leq C_p\int_0^T\mathbb{E}\left[\left(\int_0^s (s-r)^{-2\eta}|S(s-r)\sigma(X_r)|_2^2dr\right)^{p/2}\right] ds,\end{equation}
which is further bounded, using assumption $\mathbf{H}_1$, estimate \eqref{momentestimate} and the linear growth assumption $\mathbf{H}_5$, by $C(1+|x|^p)$ where $C$ is some fixed constant. The estimate for $\int_0^T \mathbb{E}[|b(s)|^p]ds$ is similar. 

For any $0<p^{-1}<\delta\leq 1$ define 
\begin{equation}\label{blackboxs}\Lambda(R,\delta):=\{\omega\in C([0,T];H):\omega=G_\delta u,\quad \int_0^T |u(s)|^p ds<R\}.\end{equation}
Denote also by $$\Xi(R):=\{\omega\in C([0,T];H):\omega(t)=S(t)x+w_1(t)+w_2(t),w_1\in \Lambda(R,1), w_2\in \Lambda(R,\eta)\}.$$
Then $\Xi(R)$ is relatively compact in $C([0,T];H)$ for any $R>0$, thanks to Proposition \ref{prop1.3}. Note that the spaces $\Xi(R)$ and $\Lambda(R,\delta)$ are just function spaces for which the solution belongs to with high probability, so their definition does not involve coefficients $b$ and $\sigma$.

Moreover,  $\mathbb{P}(X|_{[0,T]}\notin \Xi(R))$ converges to $0$ as $R\to\infty$, thanks to the boundedness of  $\mathbb{E}[|\sigma(\cdot)|_{L^p([0,T];H)}]$ and $\mathbb{E}[|b(\cdot)|_{L^p([0,T];H)}]$, see \eqref{youngbdg}.

Now we can prove Proposition \ref{proposition1.2}.
\begin{proof} We deal with the case without the Burgers non-linearity $(-A)^{\vartheta}F$, and the case with such non-linearity is in the next Appendix.
For each $t\in[0,T]$ denote by $\Xi(R)_t$ the projection of $\Xi(R)$ onto the subspace $\{t\}\times H$, and identify $\{t\}\times H$ with $H$. Set $K:=\cup_{t\in[0,T]}\Xi(R)_t$. By the argument after the statement of Proposition \ref{prop1.3}, the set $K$ is relatively compact in $H$ for any $R>0$. It suffices to take $R$ sufficiently large to guarantee $\mathbb{P}(\theta_K\leq T)<\epsilon.$
\end{proof}

\section{Existence of weak mild solutions}\label{appendixB}
We again assume the semigroup satisfies assumption $\mathbf{H}_1$.

We first quote the following compactness result (see \cite{priola2022correction}, Proposition 11).

\begin{proposition}\label{propb1}
    Given $p>2$. The operator $Q:L^p([0,T];H)\to C([0,T];H)$
    $$Qf(t):=\int_0^t (-A)^{\vartheta}S(t-s)f(s)ds,\quad f\in L^p([0,T];H),t\in[0,T]$$
    is compact. 
\end{proposition}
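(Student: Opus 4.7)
The plan is to factorize $Q$ as the composition of a bounded operator with an already-compact operator, mirroring the stochastic factorization technique used throughout the paper. Concretely, fix $\alpha \in (1/p, 1/2)$, which is possible since $p > 2$. Using the Beta-function identity
$$\int_s^t (t-r)^{\alpha-1}(r-s)^{-\alpha}\, dr = \frac{\pi}{\sin(\pi\alpha)},$$
together with the commutation $S(t-r)(-A)^{1/2}S(r-s) = (-A)^{1/2}S(t-s)$ (valid since $A$ is diagonalizable by $\mathbf{H}_1$) and a Fubini interchange, one rewrites
$$Qf(t) = \frac{\sin(\pi\alpha)}{\pi}\int_0^t (t-r)^{\alpha-1}S(t-r)\, Y_f(r)\, dr, \qquad Y_f(r) := \int_0^r (r-s)^{-\alpha}(-A)^{1/2}S(r-s)f(s)\, ds.$$
In the notation of \eqref{factorization} this reads $Qf = \frac{\sin(\pi\alpha)}{\pi}\, G_\alpha Y_f$.

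Next I would show that $f \mapsto Y_f$ is a bounded linear operator from $L^p([0,T];H)$ to itself. Combining the pointwise bound $\|(-A)^{1/2}S(u)\|_{op} \leq d_{1/2}\, u^{-1/2}$ from \eqref{05.1} with the triangle inequality yields
$$|Y_f(r)|_H \leq d_{1/2}\int_0^r (r-s)^{-\alpha-1/2}|f(s)|_H\, ds.$$
Since $\alpha + 1/2 < 1$, the kernel $s \mapsto s^{-\alpha-1/2}$ lies in $L^1(0,T)$, and Young's convolution inequality delivers $\|Y_f\|_{L^p(0,T;H)} \leq C_{\alpha,T}\|f\|_{L^p(0,T;H)}$. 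This estimate also justifies the Fubini swap above, since the iterated integrand is absolutely integrable in $H$ after taking norms.

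The summability condition in $\mathbf{H}_1$ forces $\lambda_n \to \infty$, so the semigroup $S(t)$ is compact (in fact trace-class) for every $t > 0$. Since the range $1/p < \alpha \leq 1$ is verified, Proposition \ref{prop1.3} implies that $G_\alpha : L^p([0,T];H) \to C([0,T];H)$ is compact. The composition of a bounded linear operator with a compact operator being compact, we conclude that $Q = \frac{\sin(\pi\alpha)}{\pi}\, G_\alpha \circ (f \mapsto Y_f)$ is compact from $L^p([0,T];H)$ into $C([0,T];H)$.

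The only real technical obstacle is the verification of the factorization identity, after which everything reduces to the already-proved Proposition \ref{prop1.3}. The additional $(-A)^{1/2}$ factor compared with Proposition \ref{prop1.3} costs an extra half-power $(r-s)^{-1/2}$ in the kernel defining $Y_f$, and it is precisely the strict inequality $\alpha < 1/2$ that keeps this kernel integrable. This in turn is exactly where the hypothesis $p > 2$ is used, since otherwise no $\alpha$ simultaneously exceeds $1/p$ and is smaller than $1/2$.
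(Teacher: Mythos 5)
Your proof is correct. Note that the paper itself does not prove this proposition: it is quoted verbatim from \cite{priola2022correction}, Proposition 11, so you have supplied a self-contained argument where the paper defers to a reference. Your argument is exactly in the spirit of the paper's Appendix \ref{appendixA}: the deterministic factorization identity is the Beta-function computation underlying \eqref{factorization}, the commutation $S(t-r)(-A)^{1/2}S(r-s)=(-A)^{1/2}S(t-s)$ is legitimate because for $s<r<t$ the operator $(-A)^{1/2}S(r-s)$ is bounded and everything is diagonal in the basis $\{e_n\}$, and the Fubini interchange is justified by the bound $\|(-A)^{1/2}S(t-s)\|_{op}\leq d_{1/2}(t-s)^{-1/2}$ from \eqref{05.1} together with $f\in L^p$, $p>2$. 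The $L^p\to L^p$ boundedness of $f\mapsto Y_f$ via Young's convolution inequality with the kernel $u^{-\alpha-1/2}\in L^1(0,T)$ is where $\alpha<1/2$ is used, and the window $\alpha\in(1/p,1/2)$ being nonempty is precisely the hypothesis $p>2$; the compactness of $S(t)$ for $t>0$ does follow from $\mathbf{H}_1$ since the summability condition forces $\lambda_n\to\infty$. Composing the bounded map $f\mapsto Y_f$ with the compact map $G_\alpha$ of Proposition \ref{prop1.3} then gives the conclusion. I see no gap.
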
More precisely, \cite{priola2022correction}, Proposition 11 considered the case $\vartheta=\frac{1}{2}$, but the proof generalizes to all $\vartheta\in(0,1)$ without much difficulty.

Now we prove the existence of weak mild solutions needed in Theorem \ref{burgerstheorem1}. The proof is a generalization of the argument in \cite{priola2022correction}, Section 4, so we only give a sketch.

\begin{proof}
Denote by $\pi_m$ the orthogonal projection of $H$ onto the subspace spanned by the first $m$ eigenvalues $e_1,\cdots,e_m$. We write $A_m=A\circ \pi_m$. Using the weak existence result in \cite{gkatarek1994weak} (with straightforward generalization to time dependent coefficients), for each $m$ we can construct weak mild solutions to the SPDE 
\begin{equation}\label{approxSPDE}dX^m_t=A X^m_t dt+ b(t,X^m_t)dt+(-A_m)^{\vartheta}F(t,X^m_t)dt+\sigma(t,X^m_t)dW_t.\end{equation}
We can prove that for some $p>2$,
\begin{equation}\label{momentunis}\sup_{m\geq 1}\sup_{t\in[0,T]}\mathbb{E}|X^m_t|^p= C_p<\infty.\end{equation}
To obtain \eqref{momentunis} we need to control 
$$\left|\int_0^t (-A_m)^{\vartheta}S(t-s)F(s,X_s^m)ds\right|^p_H$$ via a Gronwall argument, which is done in Section 4 of \cite{priola2022correction}. We also need to control $$\left|\int_0^t S(t-s)
\sigma(s,X_s^m)dW_s\right|^p_H$$ via a Gronwall argument, which can be found in chapter 8 of \cite{da2014stochastic} (utilizing the mapping $G_\eta$ defined in \eqref{factorization}, Young's inequality and the assumption $\mathbf{H}_1$ on the semigroup $S(t)$.)

Now denote by $F_m:=\pi_m\circ F$, we learn from \eqref{momentunis}  and the linear growth property of $F$ that for any $T>0$, 
\begin{equation}
    \label{uniest12}
\sup_{m\geq 1}\mathbb{E}\int_0^T |F_m(t,X^m_t)|_H^pdt<\infty.\end{equation}

In the spirit of the decomposition \eqref{expansionrepresent}, we observe that the solution $X_t^m$ to \eqref{approxSPDE} can be reformulated as
$$X_t^m= S(t)x+G_1 b(X^m_\cdot)(t)+\frac{\sin\pi\eta}{\pi}G_\eta \sigma(X^m_\cdot)(t)+QF^n(X^m_\cdot)(t),$$
where $G_1$ and $G_\eta$ are defined in Appendix \ref{appendixA}. By $G_1b(\cdot,X_\cdot^m)(t)$ we mean that we consider the function $f:s\mapsto b(s,X_s^m)$, then compute $G_1(f)$, and evaluate the resulting function at time $t$. The same interpretation applies to $QF^m(\cdot,X_\cdot^m)$(t). The case of $G_\eta\sigma(\cdot,X_\cdot^m)(t)$  is slightly different, where we consider the function $f:s\mapsto (t-s)^{-\eta}S(t-s)\sigma(s,X_s^m)dW_s$.

Now for any $R>0$ define 
$$Q(R):=\{\omega\in C([0,T];H):\quad \omega=Q u,\quad \int_0^T |u(s)|^p ds<R\},$$
and define 
$$\begin{aligned}
\Xi(R):=\{&\omega\in C([0,T];H):\omega(t)=S(t)x+w_1(t)+w_2(t)+w_3(t),\\&w_1\in \Lambda(R,1), w_2\in \Lambda(R,\eta),w_3\in Q(R)\},\end{aligned}$$
where $\Lambda(R,1)$ and $\Lambda(R,\eta)$ are defined in \eqref{blackboxs}.
\end{proof}
By Proposition \ref{prop1.3} and \ref{propb1}, $\Xi(R)$ is compact in $C([0,T];H)$ for any $R>0$. By the uniform moment estimate \eqref{uniest12} and the corresponding one for $b(t,X_t^m)$ and $\sigma(t,X_t^m)$\footnote{For $\sigma(X_t^m)$ we need (uniform in $m$) $L^p$- estimates for the function $f^m:s\mapsto (t-s)^{-\eta}S(t-s)\sigma(X_s^m)dW_s$, which can be obtained via the infinite dimensional Burkholder inequality, the semigroup assumption $\mathbf{H}_1$ and the linear growth property of $\sigma$.}, we deduce that $$P((X_t^m)_{0\leq t\leq T}\notin \Xi(R))\to 0 \text{ as } R\to\infty,$$ uniformly over $m\geq 1$. By the Prokhorov theorem the laws of $(X_t^m)_{0\leq t\leq T}$ are tight in $C([0,T];H)$.

The next step is to use Skorokhod representation theorem to enlarge the probability space and take the limit of the tight sequence, using continuity of the coefficients and martingale representation theorem. The argument can be found in Section 2 of \cite{gkatarek1994weak}, so we omit the details.

\section{Verification of the Lyapunov condition}

\label{appendixC}
In this appendix we prove that when the functions $b$,$\sigma$ and $F$ are bounded, then $V(x):=|x|+1$ can serve as a Lyapunov function for the SPDEs we consider, as claimed in Remark \ref{remark111}. The general case of unbounded $b,\sigma,F$ of linear growth may also be considered, but we omit it for simplicity.

\begin{proof}We assume that the functions $b,\sigma$ and $F$ are bounded. Since $A$ is a negative operator with largest eigenvalue $-\lambda_1<0$, necessarily we have $\|S(t)\|_{op}\leq e^{-\lambda_1 t}$. Writing the SPDE in its mild formulation as follows:
\begin{equation}\begin{aligned}
    X_t&=S(t)x+\\&\int_0^t S(t-s)b(X_s)ds+\int_0^t (-A)^{\vartheta}S(t-s)F(X_s)ds+\int_0^t S(t-s)\sigma(X_s)dW_s.\end{aligned}
\end{equation}
For the first term, we use $|S(t)x|\leq e^{-\lambda_1 t}|x|$. For the remaining three terms in the second line, we use the boundedness of $b$, $F$ and $\sigma$, boundedness of the semigroup $S(t)$, as well as estimate \eqref{05.1} or \eqref{burgersusefulestimate}, to deduce that they are all bounded in expectation uniformly for all $t\in[0,T_0]$. Therefore taking expectation on both sides, we have
$$
\mathbb{E}|X_{t_0}|\leq e^{-\lambda_1 t_0}|x|+C_V,
$$ where $C_V>0$ is the almost sure upper bound for $\int_0^{t_0} S(t-s)b(X_s)ds+\int_0^{t_0} (-A)^{\vartheta}S(t-s)F(X_s)ds$, since the stochastic integral has zero expectation.
This verifies the claimed Lyapunov condition \eqref{lyapunov1} by giving the value of $c=1-e^{-\lambda_1 t_0}$ and $C_V>0$.
\end{proof}

\section{Proof of Lemma \ref{1028}}\label{appendixd}

Here we outline the proof of Lemma \ref{1028}.

We first need a notation on Banach space differentials. For any $x\in H_0$, $x$ is identified as an $r$-dimensional function $(x_i)_{i=1}^r$. We find $\xi_1,\cdots,\xi_r\in[0,1]$ such that $|x_i(\xi_i)|=|x_i|_{\mathcal{C}([0,1]}$. Let $H_0^*$ be the dual Banach space of $H_0$, and we find an arbitrary element $\delta\in H_0^*$ with norm 1. We define an element $\delta_x\in H_0^*$ such that for any $y\in H_0$ we have 
\begin{equation}\label{first1471}
    \langle \delta_x,y\rangle_{H_0}:=\begin{cases} \frac{1}{|x|_{H_0}}\sum_{i=1}^r x_i(\xi_i)y_i(\xi_i),\quad\text{if }x\neq 0,\\
\langle \delta,y\rangle_{H_0},\quad\text{if }x=0.
    \end{cases}
\end{equation}

We define the function $F(t,x)(\xi):=f(t,\xi,x(\xi))$ for each $x\in H_0$ and each $\xi\in[0,1]$. We can check that the following three inequalities are satisfied by $F$. First, for any $x,h\in L^{2m}([0,1];\mathbb{R}^r)$ (where $m$ was fixed in (6),polynomial growth \eqref{polyki}), we have
\begin{equation}\label{propertyf1}
    \langle F(t,x+h)-F(t,x),h\rangle_H\leq\Lambda(t)(1+|h|_H^2+|x|_H^2) 
\end{equation} for some $\Lambda\in L_{loc}^\infty[0,\infty)$.
To see this, we apply the dissipative condition (7) \eqref{dissipative336} for $k_i$ and apply the linear growth condition (5) \eqref{linearhi} for $f_i$. Combining both bounds leads to the desired estimate. Note that no Lipschitz continuity is used.

Second, we have that for some $\Phi\in L_{loc}^\infty[0,\infty)$ there holds
\begin{equation}\label{propertyf2}
|F(t,x)|_{H_0}\leq\Phi(t)(1+|x|_{H_0}^m),\quad x\in H_0.
\end{equation} This is immediate from (5) linear growth of $h_i$ and (6) polynomial growth of $k_i$. Finally, we have that there exists some $\Lambda\in L_{loc}^\infty[0,\infty)$ such that for all $x,h\in E$,
\begin{equation}\label{propertyf3}
\langle F(t,x+h)-F(t,x),\delta_h\rangle_{H_0}\leq\Lambda(t)(1+|h|_{H_0}+|x|_{H_0}).
\end{equation} This can be checked using the dissipative condition (7) for $k_i$ and the linear growth for $h_i$. Again we stress that we are not using local Lipschitz continuity of $h$ and $k$ at any place.

Now we prove Lemma \ref{1028}.

\begin{proof}[\proofname\ of Lemma \ref{1028}] Recall that in the statement of Lemma \ref{1028} we assumed $h_i$ and $k_i$ are both globally Lipschitz continuous. We shall be following closely the proof of \cite{cerrai2003stochastic}, Lemma 5.4 and the aim is to check that that proof continues to work here with no dependence on Lipschitz or Hölder constants of $h_i$ and $k_i$.

Let $u^x$ denote the solution to the original SPDE considered in Lemma \ref{1028}. Let $\Gamma(u^x)$ be the solution to 
\begin{equation}
    dv(t)=Av(t)dt+G(t,u^x(t))dW_t,\quad v(0)=0.
\end{equation}
Then $\Gamma(u^x)$ is the unique fixed point in $L^p(\Omega,C([0,T];H_0))$ of the mapping 
$$v(t)\mapsto \mathcal{S}(t-s)v(s)+\int_s^t\mathcal{S}(t-r)G(r,u^x(r))dW_r.$$ We denote by $\gamma(u^x)(t)=\int_s^t \mathcal{S}(t-r)G(r,u^x(r))dW_r$.

After a comparison argument as in \cite{cerrai2003stochastic} we can deduce that 
\begin{equation}\label{gammauxt}
|\Gamma(u^x)(t)|_{H_0}\leq c_s(t)\sup_{r\in[s,t]}|\gamma(u^x)(r)|_{H_0},
\end{equation} where $c_s(t)$ is some nonnegative function which is increasing in $t$ and satisfies $c_s(s)=0.$ This applies to all functions $c_s(t)$ in the following proof.

Now we subtract the random part and consider $v(t):=u^x(t)-\Gamma(u^x)(t)$. Then $v$ solves the following problem
$$
\frac{dv}{dt}(t)=Av(t)+F(t,v(t)+\Gamma(u^x)(t)),\quad v(s)=x.
$$
We assume $v$ is a strict solution (or can use approximation). Then we have the subdifferential inequalities
$$\begin{aligned}
&\frac{d}{dt}^{-}|v(t)|_{H_0}\leq\langle Av(t),\delta_{v(t)}\rangle_{H_0}+\langle F(t,v(t)+\Gamma(u^x)(t)),\delta_{v(t)}\rangle_{H_0}=\langle Av(t),\delta_{v(t)}\rangle_{H_0}\\&+
\langle F(t,v(t)+\Gamma(u^x)(t))-F(t,\Gamma(u^x)(t)),\delta_{v(t)}\rangle_{H_0}+\langle F(t,\Gamma(u^x(t)),\delta_{v(t)}\rangle_{H_0},\end{aligned}$$
where $\delta_{v(t)}$ is the element defined in \eqref{first1471}.

Using properties \eqref{propertyf1}, \eqref{propertyf2} and \eqref{propertyf3} satisfied by $F$, that $\langle Av(t),\delta_{v(t)}\rangle_{H_0}\leq 0$, and applying Young's inequality, we have 
$$
\frac{d}{dt}^{-}|v(t)|_{H_0}\leq\Lambda(t)|v(t)|_E+(\Lambda(t)+\Phi(t))\left(1+|\Gamma(u^x)(t)|_E^m\right).
$$
Then applying a comparison argument and using $u^x(t)=v(t)+\Gamma(u^x)(t)$, we get 
\begin{equation}\label{bothsdesequations}
|u^x(t)|_E\leq c_s(t)|x|_{H_0}+c_s(t)\left(1+\sup_{r\in[s,t]}|\Gamma(u^x)(r)|_{H_0}^m\right).
\end{equation}
    Since we assumed the diffusion coefficients are bounded from above \eqref{twosidedboundsg2}, we can use Theorem \ref{4.1fuc} (the $\lambda=0$ case) to estimate the stochastic integral $\gamma(u^x)(t)$ and get a bound 
    $$
\mathbb{E}\sup_{r\in[s,t]}|\gamma(u^x)(r)|^{pm}_{H_0}\leq c_{s,p}(t),\quad \forall p\in\mathbb{N}_+,
    $$ where the constant $c_{s,p}(t)$ further depends on $p$ and the supremum norm of $g_i$. Taking expectation on both sides of \eqref{bothsdesequations} and using \eqref{gammauxt}  we finally conclude that 
$$
\mathbb{E}|u^x(t)|_{H_0}^p\leq c_{s,p}(t)(1+|x|_{H_0})^p<\infty,
$$
    which completes the proof.
\end{proof}

\section*{Statements and Declarations
}

The author has no financial or non-financial interests to declare that are directly or indirectly related to the work submitted for publication.

\section*{Acknowledgement}

The author is very thankful to Professor Mickey Salins for offering enormous suggestions while the author was revising an earlier version of this paper. The author is also very thankful to the anonymous referee for pointing out numerous mistakes and impressions in the submitted manuscript. The majority of the work was completed when the author was affiliated with University of Cambridge as a doctoral student.

\printbibliography

\end{document}